\newcommand{\beq}{\begin{equation}}
\newcommand{\eeq}{\end{equation}}
\newcommand{\bea}{\begin{eqnarray}}
\newcommand{\eea}{\end{eqnarray}}
\newcommand{\beas}{\begin{eqnarray*}}
\newcommand{\eeas}{\end{eqnarray*}}
\newtheorem{theorem}{Theorem}[section]
\newtheorem{definition}[theorem]{Definition}
\newtheorem{proposition}[theorem]{Proposition}
\newtheorem{lemma}[theorem]{Lemma}
\newtheorem{remark}[theorem]{Remark}
\newtheorem{example}[theorem]{Example}
\newtheorem{examples}[theorem]{Examples}
\newtheorem{assumption}[theorem]{Assumption}
\newtheorem{foo}[theorem]{Remarks}
\newcommand{\R}[1]{\mathbb{R}}     
\title{Fractional stable random fields on the Sierpi\'nski gasket}
\author{Fabrice Baudoin\footnote{Partly funded by NSF DMS-2247117 and Simons Foundation}, C\'eline Lacaux}
\begin{document}

\maketitle

\begin{abstract}
We define and study  fractional stable random fields on the Sierpi\'nski gasket. Such fields are formally defined as $(-\Delta)^{-s} W_{K,\alpha}$, where $\Delta$ is the Laplace operator on the gasket and $W_{K,\alpha}$ is a stable random measure. Both Neumann and Dirichlet boundary conditions for $\Delta$ are considered. Sample paths regularity and scaling  properties are obtained.  The techniques we develop are general and extend to the more general setting of the Barlow fractional spaces.
\end{abstract}

\noindent
{\footnotesize{\it Keywords:} Fractional stable fields, fractional Riesz kernels, H\"older continuity.}\\ 
{\footnotesize{\it 2020 Mathematics Subject Classification.} Primary: 
60G52, 
60G17  
; Secondary: 
60G60,
    28A80, 
    31E05
}

\tableofcontents

\section{Introduction}

Fractional random fields are widely studied since the last decades, especially in the Euclidean framework that is when the field is indexed by $\mathbb R^d$.    The Fractional Brownian field $B_H=(B_H(x))_{x\in\mathbb R^d}$ with Hurst index $H\in (0,1)$, introduced in \cite{MVN,Kolmogorov}, is certainly the most popular fractional Gaussian field. It admits several representation, as harmonizable and moving average ones: this has led to several generalizations aiming to relax some  properties, such as self-similarity or isotropy,  but also allowed to introduce non-Gaussian fractional random fields. Being far from exhaustive, we mention  \cite{CM89,KT94,Cohen01,Cohen02} as examples of non-Gaussian Euclidean fractional random fields,   \cite{AyacheLeger02,Estrade03,OSSRF,ARX09} as examples of anisotropic fractional (Gaussian or not) fields  and \cite{BEJARO,PELLEVY,Lacaux02,BLS11-MOSRF,AEH18} as examples of  multifractional (Gaussian or not) fields. Recently, \cite{MR3466837} has also named  fractional Gaussian field $X$ a Gaussian field defined as solution, in a distribution sense of 
$$
X=(-\Delta)^{-s} W,
$$
where $s\ge 0$,  $W$ is a white noise and $\Delta$ is the Euclidean Laplacian on $\mathbb R^d$. This class extends Fractional Brownian motions, including smoother random fields especially and some other classical fields such as white noise, log-correlated Gaussian random field and the Gaussian free field. Moreover,  this approach has been considered before in \cite{BEJARO}  in terms of reproducing kernel to define elliptic Gaussian processes. More precisely, the reproducing kernel of an elliptic Gaussian process and then its covariance function is characterized by an elliptic symmetric positive pseudodifferential operator $A$ and in particular choosing for $A=(-\Delta)^s$ leads to the particular case of fractional Euclidean Gaussian fields.    \\

Moreover, since several years, the interest in random fields with heavy-tails, such as $\alpha$-stable random fields, is increasing and many papers focus on their sample path regularity. Most often in the literature, the fractional Euclidean fields studied are defined via an integral representation, such as moving average or harmonizable one. As far as we know, \cite{BenassiRoux} is the only work that has considered a class of stable random fields defined via an elliptic operator and then followed the approach of \cite{BEJARO}. For a Gaussian field ($\alpha=2$), it is well-known that the smoothness is linked to the behavior of the second order moment 
$$
\mathbb E \left(X(x)-X(y)\right)^2
$$
as $x\to y$. Based on this behavior and on a chaining argument,  the entropy method allows to obtain quite fine controls of the modulus of continuity for Gaussian random fields (see e.g. \cite{adler2009}). When $\alpha<2$, the $\alpha$-stable random field is not anymore square integrable and then the methods to study its smoothness properties differ from the Gaussian case. For example, \cite{Xiao10} has obtained a uniform modulus of continuity of stable random fields using a chaining argument and  the notion of maximal moment index. {Another} technique proposed in \cite{kono1970} and followed by \cite{MR2531090,BLS11-MOSRF}, relies on a chaining argument and a representation as a conditionally Gaussian LePage random series.  Considering a more general framework,  \cite{BL-2015-Bernoulli} has also given some sufficient  conditions to derive an upper bound for the modulus of continuity of  Euclidean stable random fields. Other studies, such as \cite{Ayache1,Ayache2,Ayache3,BenassiRoux}, are based on a wavelet series representation of the random field under study.   \\

Non-Euclidean fractional fields have also been considered quite recently:  \cite{Istas1} has defined the fractional Brownian field  on manifolds and metric spaces giving its covariance function, replacing the Euclidean norm by a distance $d$ and \cite{IL13} has considered some moving average random fields. Using the fractional Riesz kernels and the Laplace Beltrami operator  and following the approach of \cite{BEJARO,BenassiRoux,MR3466837},  \cite{Gelbaum14} has then considered fractional Brownian fields over manifolds.  The previous works \cite{BL22-FGF,BC22-DFGF} on fractional fields indexed by a fractal set have also followed this approach. \\

The main goal to this paper is to extend \cite{BL22-FGF,BC22-DFGF} to the stable framework, that is to define and study 
\begin{equation}
\label{FGF-def-intro}
X_{s,\alpha}=(-\Delta)^{-s} W_{K,\alpha},
\end{equation}
where $\alpha\in (0,2]$, $s\ge 0$, $\Delta$ is the Neummann or Dirichlet Laplace operator on the Sierpi\'nski gasket $K$ and $W_{K,\alpha}$ is a symmetric real-valued $\alpha$-stable random measure with control measure the Hausdorff measure $\mu$ on $K$. 
As in \cite{BL22-FGF,BC22-DFGF},  the definition \eqref{FGF-def-intro} has to be interpreted in a distribution sense, that is as 
$$
X_{s,\alpha}(f)=W_{K,\alpha}\left((-\Delta)^{-s}f\right)=\int_{K}
(-\Delta)^{-s} f \, dW_{K,\alpha}
$$
for $f$ in a suitable Sobolev space $H^{-s}(K)$ of functions.   
Compared to \cite{BL22-FGF,BC22-DFGF},  the white noise  on $L^2(K,\mu)$, that is the Gaussian random measure with intensity the measure $\mu$, is here replaced by an $\alpha$-stable measure $W_{K,\alpha}$. As a consequence, $X_{s,\alpha}$ is an $\alpha$-stable symmetric random field, that is for any functions $f_1,\ldots,f_n$ and any real numbers $\lambda_1,\ldots,\lambda_n$, $\sum_{i=1}^n \lambda_i X_{s,\alpha}(f_i)$ is a symmetric $\alpha$-stable random variable. Note that  when $\alpha=2$,  $W_{K,\alpha}=W_{K,2}$ is nothing else than a white noise on $L^2(K,\mu)$ 
and hence $X_{s,2}$  is or the Neumman fractional Gaussian field studied in \cite{BL22-FGF} or the Dirichlet one studied in \cite{BC22-DFGF}. 
In this paper, we then focus on the case $\alpha<2$, for which the techniques for studying the sample path regularity differ from the ones used in the Gaussian case ($\alpha=2$).  \\

 As in \cite{BL22-FGF,BC22-DFGF}, we are interested in the existence of a random field $\widetilde{X}_{s,\alpha}$ defined pointwise on $K$ which is a density of $X_{s,\alpha}$, that is which satisfies   for $f$ in a suitable space $\mathcal{S}(K)$ of test functions, 
$$
X_{s,\alpha}(f)=\int_{K} (-\Delta)^{-s} f(x) \widetilde{X}_{s,\alpha}(x)\,\mu(dx)\quad a.s. 
$$
Then we establish, see Theorem  \ref{Existence-Density} that such density exists if and only if  $s>s_\alpha$  which extends Proposition 2.17 of \cite{BC22-DFGF} to the $\alpha$-stable framework. 
The proof relies on a stochastic Fubini theorem for stable random fields (see \cite{taqqu}). 
More or less, up to an additive random variable, this density field  admits as integral representation 
$$
 \widetilde{X}_{s,\alpha}(x)= \int_{K} G_s(x,y) W_{K,\alpha}(dy),
$$
with $G_s$ the fractional Riesz kernel,  
and it is itself an $\alpha$-stable random field. When $\alpha=2$, it corresponds to the Gaussian density field defined in  \cite{BL22-FGF,BC22-DFGF}. Moreover, according to \cite{Rosinski89}, roughly speaking, the sample paths of $\widetilde{X}_{s,\alpha}$ can not be smoother than the kernel $G_s$: this leads to unboundedness of $\widetilde{X}_{s,\alpha}$ for $s\in (s_\alpha, s_0)$, where $s_0$ does not depend on the  index stability $\alpha\in (0,2)$ (see Theorems \ref{SPR} and \ref{SPR2}), whereas in the Gaussian framework $\alpha=2$, the random field $\widetilde{X}_{s,\alpha}$ has always h\"olderian sample paths. In the Euclidean framework, this difference is also observed: for example, moving average fractional stable motion indexed by $\mathbb R^d$ can be unbounded, see e.g.\cite{taqqu,KM91,T89,MR2531090}, and especially  are always not continuous when ${d\ge 2}$. Moreover, if $s>s_0$,   we then establish that the fractional field $\widetilde{X}_{s,\alpha}$ admits a continuous modification and obtain an upper bound of its modulus of continuity. As in \cite{MR2531090,BLS11-MOSRF,BL-2015-Bernoulli}, our proof relies on its representation as a conditionally Gaussian LePage random series but slightly differs since  {another} key ingredient is the Garsia-Rumsey-Rodemich inequality. This inequality has first been established in \cite{GRR70} for Euclidean processes indexed by $\mathbb R$, extended  to random fields indexed by a metric space in \cite{AS96} and proved for random fields indexed by a fractal in \cite{BP88}. Compared to \cite{MR2531090,BLS11-MOSRF,BL-2015-Bernoulli}, it allows us to avoid the construction of a specific dense sequence $\mathcal D\subset K$ on which the increments of $\widetilde{X}_{s,\alpha}$ are controlled.  \\

To conclude, let us remark that fractional stable random fields could be defined and studied in the framework of the Barlow fractional spaces as in Section 4 of \cite{BL22-FGF}. Indeed our general analysis of such fields is based on heat kernel estimates which are available in this general framework. However, for  the sake of presentation we focused on the specific example of the Sierpi\'nski gasket. In that example further specific properties, like scaling and symmetries can then  be proved, see Section 3.5. \\

The paper is organized as follows. In Section 2, after some background on the Sierpi\'nski gasket and Dirichlet form on it, the Neumann Fractional Laplacian is introduced as in \cite{BL22-FGF} but for any parameter $s\ge 0$. The fractional Riesz kernels are also studied. Section 3 is then devoted to the definition and study of  Neumann fractional stable random fields and Section 4 extends the results to Dirichlet fractional stable random fields. Appendix A is devoted to the proof of a technical lemma about the convolution of two Riesz kernels.

\section{Sierpi\'nski Gasket and Neumann Fractional Riesz kernels}

\subsection{Definition of the gasket}

Let us recall the definition of the (standard) Sierpi\'nski gasket, denoted by $K$ in this paper and  of its Hausdorff measure $\mu$. We identify $\mathbb{R}^2 \simeq \mathbb C$ and consider the three vertices $q_0=0$, $q_1=1$ and $q_2=e^{\frac{i\pi}{3}}$. The Sierpi\'nski gasket can then be defined inductively or  characterized owing  the contraction maps 
\[
F_i(z)=\frac{1}{2}(z-q_i)+q_i, \quad i\in \{0,1,2\} 
\]
as follows. 
 For further details we refer to the book by Kigami \cite{kigami}.
\begin{definition}
The Sierpi\'nski gasket is the unique non-empty compact set $K \subset \mathbb C$ such that 
\[
K=\bigcup_{i=0}^2 F_i (K).
\]
\end{definition}

\begin{figure}[htb]\centering
 	\includegraphics[trim={60 10 180 60},height=0.25\textwidth]{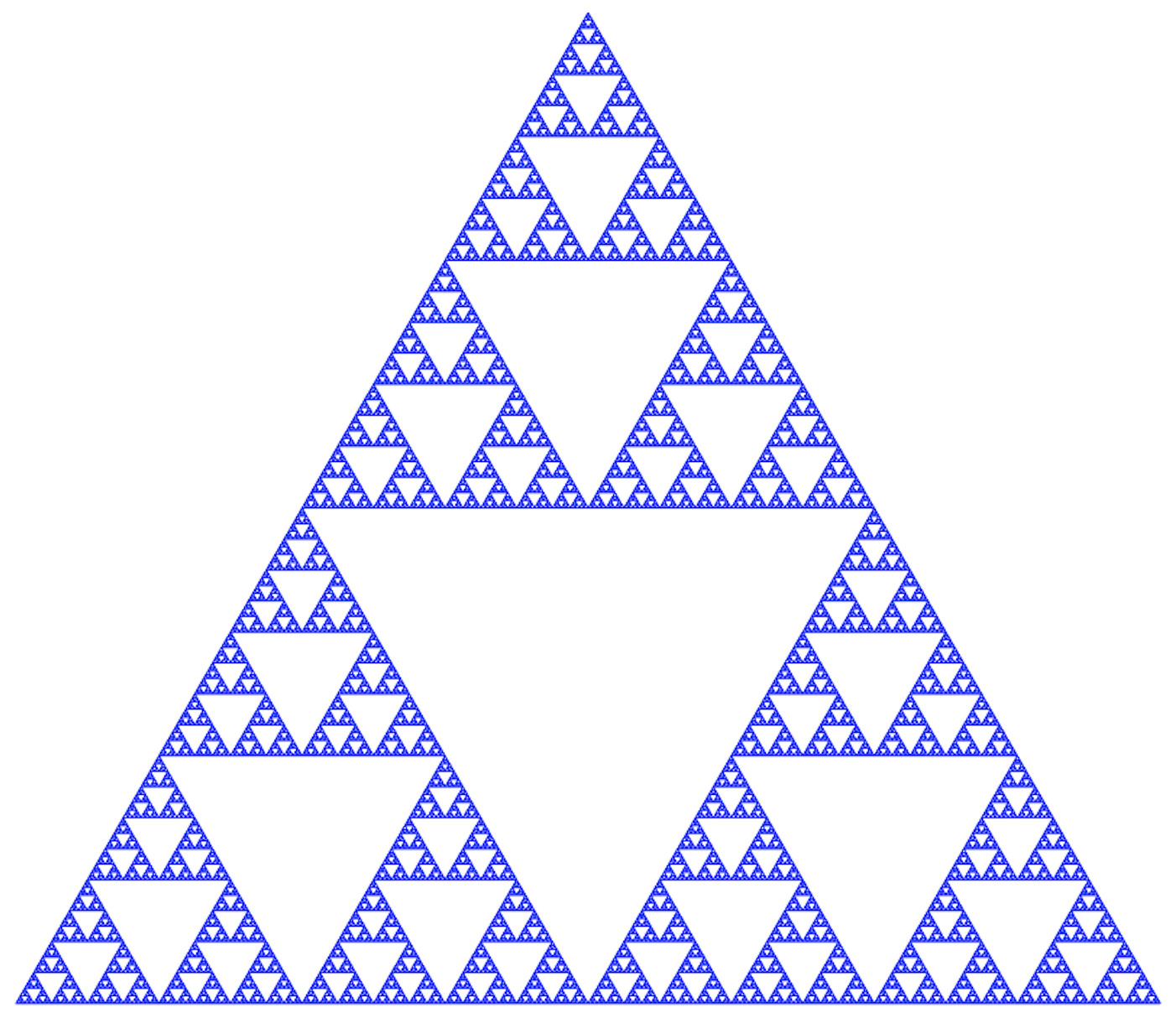}
 	\caption{Sierpi\'nski gasket.} \label{fig-SG}
 \end{figure}

In this paper, $K$ is equipped with the Euclidean metric denoted by $d$ (so that its diameter is ${\rm diam} K=1$), and of its Borel $\sigma$-field $\mathcal{K}$. Then, there exists on $(K,\mathcal{K})$   a unique probability $\mu$, named normalized Hausdorff measure,   such that for every $i_1, \cdots, i_n \in \{ 0,1,2 \} $
\[
\mu \left(  F_{i_1} \circ \cdots \circ F_{i_n}  (K)\right)=3^{-n}.
\]
As in \cite{BL22-FGF,BC22-DFGF}, one key property in our study is the  Alhfors property satisfied by the measure $\mu$. So let us recall that denoting by 
$$
d_h=\frac{\ln 3}{\ln 2}
$$
the Hausdorff dimension of $K$, the continuous probability $\mu$ is  $d_h$-Ahlfors regular, that is there exist constants $c,C>0$ such that 
\begin{equation}
\label{Ahlfors}\forall r\in (0,1], \ 
c r^{d_h} \le \mu (B(x,r)) \le C r^{d_h},
\end{equation}
where  
$B(x,r)$ is the metric ball of $K$ with center $x$ and radius~$r$. The values of $c$ and $C$ are useless in our study, but for the reader interested in,  \cite{Bar98} has proved that Equation \eqref{Ahlfors} holds e.g. with $c=1/3$ and $C=18$.

\subsection{Canonical Dirichlet form and heat kernel estimates}
Following \cite{kigami}, let us introduce in this {section} the canonical Dirichlet form  the Neumann Laplacian is associated with considering a graph approximation of the gasket. Let  $V_0= \{ q_0,q_1,q_2 \}$,    
and  for any $\boldsymbol{i}=(i_1,\ldots,i_n)\in \{0,1,2\}^n$,  
$$
W_{\boldsymbol{i}}=F_{i_1} \circ \cdots \circ F_{i_n} (V_0)$$
and then consider 
$$V_n=\bigcup_{\boldsymbol{i}\in \{0,1,2\}^n} W_{\boldsymbol{i}}  \quad \textrm{and} \quad 
V_* =\bigcup_{ n \ge 0} V_n .
$$

For any $n\in \mathbb N,$ let us now define the quadratic form $\mathcal{E}_n$ on $\mathbb{R}^{V_n}$ by 

\[{\forall f \in \mathbb{R}^{V_n}, \quad 
\mathcal{E}_n (f,f)=\frac{1}{2} \left( \frac{5}{3} \right)^n \sum_{\boldsymbol{i}\in\{0,1,2\}^n} \sum_{x,y\in W_{\boldsymbol{i}}} \left(f(x)-f(y)\right)^2 .}
\]
Then, setting 
\[
\mathcal{F}_* = \left\{ f \in \mathbb{R}^{V_*}, \lim_{n \to \infty}   \mathcal{E}_n \left(f,f\right) <+\infty \right\} 
\]
for any function $f\in \mathcal{F}_*$, let 
\begin{align}\label{dirichlet limit}
\mathcal{E} (f,f) =\lim_{n \to \infty}   \mathcal{E}_n (f,f).
\end{align}
In addition, any function $f \in \mathcal{F}_*$ can uniquely be extended into a continuous function defined on the whole~$K$. Denoting by $\mathcal F$ the set of such extensions,  $(\mathcal{E},\mathcal{F})$ is then a local regular Dirichlet form on $L^2(K,\mu)$ such that for every $f,g \in \mathcal F$
\[
\mathcal{E} (f,g)=\frac{5}{3} \sum_{i=0}^2 \mathcal{E} (f \circ F_i , g \circ  F_i ),
\]
 see the book by Kigami \cite{kigami}.  Moreover, the semigroup $\{P_t\}_{t\ge 0}$ associated with $\mathcal{E}$ is stochastically complete (i.e. $P_t 1=1$) and, from \cite{BP88},   has a jointly continuous heat kernel $p_t(x,y)$ with respect to the reference measure $\mu$. Finally,  
 there exist some constants $c_{1},c_{2}, c_3, c_4 \in(0,\infty)$ such that  for every  $(x,y)\in K \times K$ and $t\in\bigl(0,1)$
 \begin{equation}\label{eq:subGauss-upper}
 c_{1}t^{-d_{h}/d_{w}}\exp\biggl(-c_{2}\Bigl(\frac{d(x,y)^{d_{w}}}{t}\Bigr)^{\frac{1}{d_{w}-1}}\biggr) 
 \le p_{t}(x,y)\leq c_{3}t^{-d_{h}/d_{w}}\exp\biggl(-c_{4}\Bigl(\frac{d(x,y)^{d_{w}}}{t}\Bigr)^{\frac{1}{d_{w}-1}}\biggr)
 \end{equation}
 where we recall that  $d_h=\frac{\ln 3}{\ln 2}$  is the Hausdorff dimension of $K$ and where the parameter $d_w=\frac{\ln 5}{\ln 2}$  is called the walk dimension. Since $d_w > 2$,  the heat kernel  estimates in Equation \eqref{eq:subGauss-upper}  are said to be sub-Gaussian  estimates.

Let $\Delta$ be the generator of the Dirichlet form $\mathcal E$. We refer to $\Delta$ as the Neumann Laplacian on $K$. This terminology comes from the fact that functions in the domain of $\Delta$ satisfy a Neumann boundary condition, see Theorem 7.3.9 in \cite{kigami}.

\subsection{Neumann Fractional Laplacian}

In this section, as in \cite{BL22-FGF}, we introduce  the Neumann fractional  Laplacian $(-\Delta)^{-s}$   on 
\[
L^2_0(K,\mu)=\left\{ f \in L^2(K,\mu), \int_K f d\mu=0 \right\}. 
\]
Note that \cite{BL22-FGF} assumes that $s>d_h/(2d_w)$, but following  \cite{BC22-DFGF} which deals with the Dirichlet Laplacian, the definition of the Neumann fractional  Laplacian $(-\Delta)^{-s}$ is here easily extended to any $s\ge 0$. To do so, let us consider $0<\lambda_1\le \lambda_2\le   \cdots \le  \lambda_n \le \cdots$  the  eigenvalues of $-\Delta$ on $L^2_0(K,\mu)$ and an orthonormal basis $\Phi_j \in L_0^2(K,\mu) $, $j\ge 1$ such that 
$$
\Delta \Phi_j=-\lambda_j \Phi_j,
$$
see \cite{kigami} for details. In addition, from \cite{kigami},  we have $\Phi_j \in \mathrm{dom} (\Delta) \subset \mathcal F$, $j\ge 1$, and 
\begin{align}\label{spectral}
p_t(x,y)=1+\sum_{j=1}^{+\infty} e^{-\lambda_j t} \Phi_j(x) \Phi_j(y)
\end{align} 
where the series converges uniformly on any compact set $(t, x,y)\in [a,b]\times K \times K$ with $0<a<b$. 

Let us now consider $f\in L^2_0(K,\mu)$, so that in $L^2_0(K,\mu)$, 
\begin{align}\label{eg: decomposition f}
f=\sum_{j=1}^{+\infty}\int_{K}f(x)\Phi_j(x) \mu (dx) \,\Phi_j.  
\end{align}
Then, for any $s\ge 0$, 
$$
\left| \lambda_{j}^{-s} \int_{K}f(x)\Phi_j(x) \mu (dx) \right|^2\le \lambda_{1}^{-2s}
\left|  \int_{K}f(x)\Phi_j(x) \mu (dx) \right|^2
$$
and so the series 
$$
\sum_{j=1}^{+\infty}  \lambda_{j}^{-s} \int_{K}f(x)\Phi_j(x) \mu (dx) \,\Phi_j
$$
converges in $L^2(K,\mu)$ and also in $L_0^2(K,\mu)$  by continuity of the scalar product on $L^2(K,\mu)$. 
Then, according to the previous lines, the Neumann  fractional Laplacian of order $-s$ is well-defined as follows and is a bounded operator from $L^2_0(K,\mu)$ to itself. 

\begin{definition}[Neumann Fractional Laplacians]\label{FLaplacian}
Let $s \ge 0$. For $f \in L_0^2(K,\mu)$, the Neumann fractional Laplacian $(-\Delta)^{-s}$ on $f$ is defined as
\begin{align}\label{def laplace fraction}
(-\Delta)^{-s} f  =\sum_{j=1}^{+\infty} \lambda_j^{-s} \int_K  \Phi_j(y) f(y) \mu(dy) \, \Phi_j.
\end{align}
Note that  $(-\Delta)^{-s} f \in L_0^2(K,\mu)$ and $\left\| (-\Delta)^{-s} f\right\|_{L^2(K,\mu)}\le \lambda_{1}^{-s}\left\| f\right\|_{L^2(K,\mu)}$.
\end{definition}

\begin{remark}
The definition \eqref{def laplace fraction} is consistent with classical functional calculus where, for any function $h$,  $h(-\Delta)$ is defined for a function $f$ as
\[
h(-\Delta) f  =\sum_{j=1}^{+\infty} h(\lambda_j) \int_K  \Phi_j(y) f(y) \mu(dy) \, \Phi_j.
\]
The intuition is that $h(-\Delta) $ is defined in such a way that $h(-\Delta)\Phi_j=h(-\lambda_j) \Phi_j$ and then extended by linearity using \eqref{eg: decomposition f}. In our case, $h(x)=x^{-s}$. We refer to \cite[VII.1]{{Reed}} for further details about functional calculus.
\end{remark}

\begin{remark} The definition of the Neumann fractional Laplacian $(-\Delta)^{-s}$ on $f$ does not depend on the choice of the orthonormal basis $\Phi_j$, $j\ge 1$. Actually, considering $0<\lambda_{i_1}<\lambda_{i_2}<\ldots <\lambda_{i_n} <\cdots $ the distinct eigenvalues of $-\Delta$ on $L_0^2(K,\mu)$, the Neumann fractional Laplacian $(-\Delta)^{-s}$ can be rewritten as 
$$
(-\Delta)^{-s} f=\sum_{j=1}^{+\infty} \lambda_{i_j}^{-s}\, p_{E_{j}}(f)
$$
where $p_{E_j}$ is the orthogonal projection on the eigenspace  $E_j\subset L^2_0(K,\mu)$  associated with the  eigenvalue $\lambda_{i_j}$. 
\end{remark}

\begin{remark}
It follows from \eqref{def laplace fraction} that for $f \in L^2_0(K,\mu)$ and $s,t \ge 0$,
\begin{align}\label{composition laplace frac}
(-\Delta)^{-s-t} f= (-\Delta)^{-s}(-\Delta)^{-t} f.
\end{align}
\end{remark}

  We then consider  the following space of test functions
\[
\mathcal{S}(K)= \left\{ f \in C_0(K),  \forall k \ge 0 \lim_{n \to +\infty} n^k \left| \int_K  \Phi_n(y) f(y) \mu(dy) \right| =0 \right\},
\]
where
\[
C_0(K)=\left\{ f \in C(K), \int_K f d\mu=0 \right\}\subset L_0^2(K,\mu).
\]
 As in \cite{BC22-DFGF},  when $f \in \mathcal{S}(K)$, then $f \in \bigcap_{k \in \mathbb N}  \mathrm{dom}((-\Delta)^{k})$ and so  for every $k \in \mathbb N$, $(-\Delta)^{k} f \in C(K)$ and is H\"older continuous, see \cite{BP88}.  We then endow $\mathcal{S}(K)$ with the topology defined by the family of norms
\[
\| f \|_k = \| (-\Delta)^{k} f \|_{L^\infty(K,\mu)}, \quad k  \in \mathbb N.
\]
In addition, as in \cite{BC22-DFGF}, by Lemma 5.1.3 in \cite{kigami}, one can check that
$$
\sum_{j=1}^{+\infty} \lambda_j^{-s} <+\infty \iff s > \frac{d_h}{d_w} 
$$
and then that the space $\mathcal{S}(K)$ is a  nuclear space. Then its dual space $\mathcal{S}'(K)$ for the latter topology  is the analogue of the space of tempered distributions in our setting.

We also note that for $f \in \mathcal{S}(K)$ and $s \ge 0$, it follows from {Lemma 5.1.3} and Theorem 4.5.4 in \cite{kigami} (see also Lemma 4.8 in \cite{BC22-DFGF}) that the series 
\[
\sum_{j=1}^{+\infty} \lambda_j^{-s} \int_K  \Phi_j(y) f(y) \mu(dy) \, \Phi_j
\]
is uniformly convergent on $K$, {so that for $f\in \mathcal{S}(K)$, $(-\Delta)^{-s} f$ is defined pointwise on $K$ by \eqref{def laplace fraction} and $(-\Delta)^{-s} f\in \mathcal{S}(K)$.}

\subsection{Neumann Fractional Riesz kernels}

Let us recall the definition of a fractional Riesz kernel on $K$ for the Neumann Laplacian, see \cite{BL22-FGF}. 
\begin{definition}
For a parameter $s>0$, we define the fractional Riesz kernel $G_s $ by 
\begin{align}\label{green function}
G_s(x,y)=\frac{1}{\Gamma(s)} \int_0^{+\infty} t^{s-1} (p_t(x,y) -1)dt, \quad x,y \in K, \, x\neq y,
\end{align}
with $\Gamma$ the gamma function. Moreover, $G_s(x,x)$ is also well-defined when $s>d_h/d_w$ (see Lemma 2.4 in \cite{BL22-FGF}). 
\end{definition}
Note that when $s\in (0,d_h/d_w]$,  the function $y\mapsto G_s(x,y)$ is well-defined $\mu$-almost everywhere, since $\mu\left(\{x\}\right)=0$. Moreover, since the heat kernel $p_t$ is symmetric, for any $s>0$  the Riesz fractional kernel $G_s$ is also a symmetric function. \\

The relationship between the fractional Riesz kernel $G_s$ and the fractional Laplacian defined in the previous section is given by the following representation formula which  can be proved as in \cite{BC22-DFGF}. 

\begin{lemma}\label{riesz-kernel laplace}
 Let $s > 0$. For $f \in \mathcal{S}(K)$, and $x \in K$
\begin{equation}\label{eq:Fractional Laplacian-Riesz Kernel}
 (-\Delta)^{-s} f (x) = \int_K G_{s}(x,y) f(y) \mu(dy).
\end{equation}   
\end{lemma}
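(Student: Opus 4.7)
The plan is to derive the identity by expanding $(-\Delta)^{-s}f$ spectrally, substituting the Gamma-function representation of $\lambda_j^{-s}$, and then interchanging sums and integrals to recognize $G_s(x,y)$.

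First, I would start from Definition \ref{FLaplacian}, which gives
\[
(-\Delta)^{-s} f(x) = \sum_{j=1}^{+\infty} \lambda_j^{-s} \hat{f}(j) \Phi_j(x), \qquad \hat{f}(j) := \int_K \Phi_j(y) f(y)\,\mu(dy),
\]
the series being uniformly convergent on $K$ by the rapid decay property of $\hat{f}(j)$ built into the definition of $\mathcal{S}(K)$ (together with Lemma 5.1.3 of \cite{kigami}, which is already invoked in the paper). Next, I would use the elementary identity $\lambda_j^{-s}=\Gamma(s)^{-1}\int_0^{+\infty} t^{s-1} e^{-\lambda_j t}\,dt$ and swap the sum with the $t$-integral. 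Since $f\in \mathcal{S}(K)\subset C_0(K)$ has zero mean, the spectral formula \eqref{spectral} gives
\[
P_t f(x)=\int_K \bigl(p_t(x,y)-1\bigr) f(y)\,\mu(dy)=\sum_{j=1}^{+\infty} e^{-\lambda_j t}\hat{f}(j)\Phi_j(x),
\]
so the interchange produces
\[
(-\Delta)^{-s} f(x) = \frac{1}{\Gamma(s)} \int_0^{+\infty} t^{s-1}\int_K \bigl(p_t(x,y)-1\bigr) f(y)\,\mu(dy)\,dt.
\]
A final Fubini step brings the $y$-integral outside and recovers the Riesz kernel \eqref{green function}, giving the desired representation.

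The main obstacle is rigorously justifying the two interchanges. For the sum/integral swap, I would exploit that the coefficients $\hat{f}(j)$ decay faster than any polynomial in $\lambda_j$ (definition of $\mathcal{S}(K)$) and that the Weyl-type growth of $\Phi_j$ and $\lambda_j$ on the gasket is polynomial (via Lemma 5.1.3 of \cite{kigami}), so the double series/integral is absolutely convergent. For the Fubini between $t$ and $y$, I would split $\int_0^{+\infty}$ at $t=1$: for $t\in(0,1]$ one has $\|P_t f\|_\infty\le \|f\|_\infty$ by the Markov property, so $|t^{s-1} P_t f(x)|\le \|f\|_\infty t^{s-1}$, integrable since $s>0$; for $t\ge 1$ the spectral expansion of $P_tf$ yields the uniform exponential bound $\|P_t f\|_\infty \le C_f e^{-\lambda_1 t}$, which dominates $t^{s-1}P_tf$ by an integrable function. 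This provides the $L^1((0,+\infty)\times K, t^{s-1}dt\otimes \mu)$-integrability of $(t,y)\mapsto t^{s-1}(p_t(x,y)-1)f(y)$ needed to apply Fubini, and the assertion of the lemma follows.

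Since the paper notes the argument is the same as in \cite{BC22-DFGF}, I would cite that reference for the technical bookkeeping and emphasize only the two points above that are specific to the Neumann spectrum: the zero-mean property of $f\in C_0(K)$ (which is what makes $p_t(x,y)-1$ rather than $p_t(x,y)$ the relevant kernel), and the spectral gap $\lambda_1>0$ (which controls the large-$t$ behavior).
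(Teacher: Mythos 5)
Your proof is correct and follows the same spectral/subordination route that the paper (via its citation of \cite{BC22-DFGF}) intends: expand $(-\Delta)^{-s}f$ in the eigenbasis, write $\lambda_j^{-s}=\Gamma(s)^{-1}\int_0^{+\infty}t^{s-1}e^{-\lambda_j t}\,dt$, and exchange the sum, the $t$-integral and the $y$-integral. One cosmetic point: for the final Fubini step you need integrability of the full integrand $(t,y)\mapsto t^{s-1}|p_t(x,y)-1|\,|f(y)|$, which follows from $\int_K|p_t(x,y)-1|\,\mu(dy)\le 2$ (stochastic completeness) for $t\le 1$ and from the uniform bound $\sup_{x,y}|p_t(x,y)-1|\le Ce^{-\lambda_1 t}$ for $t\ge 1$ (Lemma 2.3 of \cite{BL22-FGF}, already used elsewhere in the paper), rather than from bounds on $\|P_tf\|_\infty$ alone; both are immediate.
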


The fractional Riesz kernel $G_s$ has already been studied in \cite{BL22-FGF}. First, Proposition 2.6 in \cite{BL22-FGF}  gives a control, that allows to study the  square integrability of $G_s(x,\cdot)$ needed in the Gaussian framework. In the stable framework, we are now wondering when $G_s(x,\cdot)\in L^\alpha(K,\mu)$, so that its stochastic integral with respect to the stable random measure $W_{K,\alpha}$ is well-defined. This integrability property is the key point to define pointwise fractional stable fields in next section. Proposition 2.6 in \cite{BL22-FGF} already allows to state a sufficient condition but need to be {completed}  in order to characterize completely the  integrability of $G_s(x,\cdot)$. In addition Theorem 2.10  in \cite{BL22-FGF} is interested in the H\"older property in $L^2(K,\mu)$ fulfilled by $x\mapsto G_s(x,\cdot)$. Especially, when studying fractional Gaussian random fields, this implies, using classical entropy methods, a control of their modulus of continuity.  However, in the framework of  fractional stable fields, it is  not sufficient: in our study, a {\it pointwise} H\"older property for $G_s$ is needed. Roughly speaking the stable random field under study can not be smoother than the fractional Riesz kernel involved in its definition.\\

Let us first complete  Proposition 2.6 in \cite{BL22-FGF}.    
\begin{proposition}\label{Control-G}
\hfill
\begin{enumerate}
\item 
If $s\in (0,d_h/d_w)$, then there exist three positive constants $C_1,C_2,C_3$ such that for any $x,y\in K$, $x\ne y$,
$$
C_1 d(x,y)^{sd_w-d_h} -C_2\le G_s(x,y) \le C_3 d(x,y)^{sd_w-d_h} .
$$
\item Let $s=d_h/d_w$. Then there exist three positive constants $C_1,C_2,C_3$ such that for any $x,y\in K$ such that $x\ne y$, 
$$
-C_1 \ln d(x,y) -C_2\le G_s(x,y) \le C_3 \max(\left| \ln d(x,y)\right|,1).
$$
\end{enumerate} 
\end{proposition}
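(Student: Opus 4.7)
The plan is to split the defining integral at $t=1$ and treat the two pieces separately. Write
\[
\Gamma(s)\,G_s(x,y)=\int_0^1 t^{s-1}\bigl(p_t(x,y)-1\bigr)\,dt+\int_1^{+\infty} t^{s-1}\bigl(p_t(x,y)-1\bigr)\,dt=:I_1(x,y)+I_2(x,y).
\]
The tail $I_2$ should be uniformly bounded in $(x,y)$: by the spectral expansion \eqref{spectral} and the spectral gap $\lambda_1>0$, one has $\|p_t-1\|_\infty\le Ce^{-\lambda_1 t/2}$ for $t\ge 1$ (for instance by writing $p_t=p_{1/2}\circ P_{t-1/2}$ and using $L^2$-contractivity on the zero-mean subspace), so $|I_2(x,y)|\le C'\int_1^{+\infty}t^{s-1}e^{-\lambda_1 t/2}\,dt<+\infty$. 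Thus $I_2$ only contributes an additive constant to $G_s$.

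For $I_1$, I would use the sub-Gaussian bounds \eqref{eq:subGauss-upper} directly. Setting $r=d(x,y)$ and changing variables $u=r^{d_w}/t$, one gets for any constant $c>0$,
\[
\int_0^1 t^{s-1-d_h/d_w}\exp\!\bigl(-c(r^{d_w}/t)^{1/(d_w-1)}\bigr)\,dt
=r^{sd_w-d_h}\int_{r^{d_w}}^{+\infty} u^{d_h/d_w-s-1}\exp\!\bigl(-c u^{1/(d_w-1)}\bigr)\,du.
\]
The upper bound of (1) follows by applying this to $p_t(x,y)\le p_t(x,y)-1+1$, i.e.\ using $p_t-1\le p_t$ and the upper heat-kernel bound: when $s<d_h/d_w$, the $u$-integral extended to $(0,+\infty)$ is finite (integrable both at $0$ because $d_h/d_w-s-1>-1$ and at $\infty$ because of the stretched exponential), so $I_1\le C_3'\,r^{sd_w-d_h}$. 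Adding the bounded $I_2$ and using that $r\in (0,1]$ forces $r^{sd_w-d_h}\ge 1$ absorbs the additive constant, giving the upper bound in (1).

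For the lower bound in (1), I would start from $p_t(x,y)-1\ge c_1 t^{-d_h/d_w}\exp(-c_2(r^{d_w}/t)^{1/(d_w-1)})-1$ on $(0,1)$. The $-1$ contributes the explicit bounded term $-\int_0^1 t^{s-1}dt=-1/s$, which will be absorbed into $-C_2$. The positive part, after the same change of variables, is bounded below by $c_1 r^{sd_w-d_h}\int_1^{+\infty}u^{d_h/d_w-s-1}\exp(-c_2 u^{1/(d_w-1)})\,du$ (restricting the domain of integration is allowed since the integrand is positive, and $r^{d_w}\le 1$). Combining with the bound on $I_2$ yields the claimed lower bound.

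For case (2) where $s=d_h/d_w$, the same change of variables now produces the $u$-integral $\int_{r^{d_w}}^{+\infty} u^{-1}\exp(-c u^{1/(d_w-1)})\,du$, which is finite but diverges logarithmically as $r\to 0$. Splitting it at $u=1$ shows that its behavior is $-\ln(r^{d_w})+O(1)=-d_w\ln r+O(1)$. Applied to the upper estimate this gives $I_1\le C(|\ln r|+1)$ which, after adding the bounded $I_2$, produces the upper bound with the $\max(|\ln d(x,y)|,1)$ form. Applied to the lower estimate and using that $\int_1^{+\infty}u^{-1}e^{-c_2 u^{1/(d_w-1)}}du$ is a positive constant while the full $u$-integral is $\ge -d_w\ln r -C$, it gives the stated lower bound. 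The main technical nuisance will be tracking constants between the regimes $r$ near $0$ and $r$ near $1$, so that a single pair of constants works uniformly on $(0,1]$; this is what forces the additive $-C_2$ in the lower bound and the $\max(\cdot,1)$ in (2), but otherwise the argument is just careful bookkeeping after the change of variables.
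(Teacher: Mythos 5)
Your proposal is correct and follows essentially the same route as the paper's proof: split the defining integral at $t=1$, bound the tail uniformly via the spectral gap, and handle the near-zero piece with the sub-Gaussian heat-kernel bounds and the change of variables $t\leftrightarrow u$ scaled by $d(x,y)^{d_w}$ (yours is the reciprocal substitution, but it is the same computation). The only difference is that the paper imports the upper bounds from Proposition~2.6 of \cite{BL22-FGF} and proves only the lower bounds, whereas you rederive the upper bounds by the identical argument, which is fine.
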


\begin{remark} As a consequence, for $s\le d_h/d_w$, $G_s$ is unbounded.  This will imply  unboundedness of  the  fractional stable random field associated with $G_s$, see Theorem \ref{SPR} in the sequel.
\end{remark}
\begin{proof}  
The upper bounds 
have already been established in \cite{BL22-FGF}, see Proposition~2.6. So let us now consider $s\in (0,d_h/d_w]$ and prove the lower bounds. 
Let $x,y\in K$, $x\ne y$. As in \cite{BL22-FGF}, we have 
$$
G_s(x,y)= G_{s}^{(1)}(x,y)+G_{s}^{(2)}(x,y)
$$
where 
$$
\left\{\begin{array}{rcl}
\displaystyle G_{s}^{(1)}(x,y) &=&\displaystyle\frac{1}{\Gamma(s)} \int_0^{1} t^{s-1} (p_t(x,y) -1)dt = \frac{1}{\Gamma(s)} \int_0^{1} t^{s-1} p_t(x,y) dt-\frac{1}{\Gamma(s+1)} \\[15pt]
\displaystyle G_{s}^{(2)}(x,y)& = &\displaystyle\frac{1}{\Gamma(s)} \int_1^{+\infty} t^{s-1} (p_t(x,y) -1)dt.
\end{array}
\right. 
$$
By the heat kernel lower bound \eqref{eq:subGauss-upper}, 
$$
G_{s}^{(1)}(x,y) \ge \frac{c_1}{\Gamma(s)} \int_0^{1} t^{s-1-d_h/d_w} \exp\left(-c_{2}\left(\frac{d(x,y)^{d_{w}}}{t}\right)^{\frac{1}{d_{w}-1}}\right)dt-\frac{1}{\Gamma(s+1)} 
$$
Then using the change of variable $t= u d(x,y)^{d_{w}}$, we have
$$
G_{s}^{(1)}(x,y) \ge\displaystyle  \frac{c_1d(x,y)^{sd_w-d_h} }{\Gamma(s)}\int_0^{1/d(x,y)^{d_{w}}}  u^{s-1-d_{h}/d_{w}} \exp\left(-c_2 u^{\frac{1}{1-d_{w}}}\right)  du-\frac{1}{\Gamma(s+1)}.
$$ 
Then,  denoting by $C_1$ a  positive constant (depending only on $s,d_h,d_w
$) whose value may change at each line,  when $s=d_h/d_w$, since $d(x,y)\le 1$, 
$$\begin{array}{rcl}
G_{s}^{(1)}(x,y) &\ge& \displaystyle  C_1  \int_{1
}^{1/d(x,y)^{d_{w}}}   u^{-1} \exp\left(-c_2 u^{\frac{1}{1-d_{w}}}\right)  du-\frac{1}{\Gamma(s+1)}\\[15pt]
&\ge&\displaystyle C_1 \int_{1
}^{1/d(x,y)^{d_{w}}}   u^{-1}   du-\frac{1}{\Gamma(s+1)}\\[15pt]
\end{array}
$$
since $c_2>0$ and $d_w>1$. Then, 
$$\begin{array}{rcl}
G_{s}^{(1)}(x,y) 
&\ge &\displaystyle-C_1 \ln d(x,y) 
-\frac{1}{\Gamma(s+1)}.
\end{array}
$$
Now  when $s<d_h/d_w$, since $d(x,y)\le 1$,  
$$G_{s}^{(1)}(x,y) \ge\displaystyle  C_1d(x,y)^{sd_w-d_h}   -\frac{1}{\Gamma(s+1)},
$$
with 
$$
C_1=\frac{c_1}{\Gamma(s)}\int_0^{1}
u^{s-1-d_{h}/d_{w}} \exp\left(-c_{2}{u}^{\frac{1}{1-d_{w}}}\right) du
$$
a finite positive constant since $c_2>0$ and $d_w>1$.  
Moreover, as noticed in \cite{BL22-FGF}, $G_{s}^{(2)}$ is uniformly bounded on $K\times K$, so that there exists a positive constant $c$ such that  for any $x,y\in K$
$$
G_{s}^{(2)}(x,y)\ge -c
$$
which concludes the proof choosing $C_2=\frac{1}{\Gamma(s+1)}+c$.  
\end{proof}

 Let us now study the integrability properties of the fractional Riesz kernels. 
\begin{lemma}\label{Integrabilite-G} Let $p \in (0,+\infty)$. Then for any $x\in K$, $y \mapsto G_s(x,y)$ is in $L^p(K,\mu)$ iff $s>\max\left(\frac{(p-1)d_h}{p d_w},0\right)$. Moreover, for $s>\max\left(\frac{(p-1)d_h}{p d_w},0\right)$,
$$\sup_{x\in K} \int_{K} \left|G_s(x,y)\right|^p \mu(dy)<+\infty.
$$
\end{lemma}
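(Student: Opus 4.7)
The plan is to combine the pointwise bounds from Proposition~\ref{Control-G} (and the continuity of $G_s$ from Lemma~2.4 of \cite{BL22-FGF} when $s>d_h/d_w$) with the fact that, by the $d_h$-Ahlfors regularity of $\mu$ on a bounded set, the integral $\int_K d(x,y)^\beta\,\mu(dy)$ is finite uniformly in $x\in K$ if and only if $\beta>-d_h$. This last statement follows from a standard annular decomposition $K=\{x\}\cup\bigsqcup_{k\ge 0}\{y:2^{-k-1}<d(x,y)\le 2^{-k}\}$ (valid since $\mathrm{diam}\,K=1$), each ring having $\mu$-mass bounded by $C\,2^{-kd_h}$, and the resulting geometric series being summable exactly when $\beta+d_h>0$.

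I would split into three cases. When $s>d_h/d_w$, $G_s$ is jointly continuous on $K\times K$ and hence bounded, so $y\mapsto G_s(x,y)\in L^p(K,\mu)$ uniformly in $x$ for every $p>0$; the threshold condition is automatic since $(p-1)/p<1\le d_h/(sd_w)^{-1}\cdot s/d_h$. When $s=d_h/d_w$, the upper bound of Proposition~\ref{Control-G} gives
\[
|G_s(x,y)|^p\le C\bigl(1+|\ln d(x,y)|^p\bigr),
\]
and the annular computation above shows that $\int_K|\ln d(x,y)|^p\,\mu(dy)$ is finite uniformly in $x$ for every $p>0$; here too the threshold condition $s>\max(0,(p-1)d_h/(pd_w))$ is automatic.

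The substantive case is $s\in(0,d_h/d_w)$. The two-sided bound of Proposition~\ref{Control-G} gives $-C_2\le G_s(x,y)\le C_3\,d(x,y)^{sd_w-d_h}$, hence
\[
|G_s(x,y)|^p\le C\bigl(1+d(x,y)^{p(sd_w-d_h)}\bigr).
\]
Setting $\beta=p(sd_w-d_h)$ in the Ahlfors estimate, the right-hand side is in $L^1(K,\mu)$ uniformly in $x$ iff $p(sd_w-d_h)>-d_h$, equivalently $s>(p-1)d_h/(pd_w)$, yielding sufficiency together with the uniform bound. For necessity, I would use the matching lower bound: since $C_1 d(x,y)^{sd_w-d_h}\to+\infty$ as $y\to x$, there exists $r_0>0$ depending only on $C_1,C_2,s$ such that $G_s(x,y)\ge \tfrac{C_1}{2}d(x,y)^{sd_w-d_h}$ on $B(x,r_0)$, and the same annular analysis shows $\int_{B(x,r_0)}|G_s(x,y)|^p\,\mu(dy)=+\infty$ whenever $p(sd_w-d_h)\le -d_h$.

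There is no real obstacle here; the only minor care is to ensure the constants $r_0$ and the constants hidden in the annular bound do not depend on $x$, which is immediate because the Ahlfors estimates \eqref{Ahlfors} are themselves uniform in $x$ and $\mathrm{diam}\,K=1$ forces $B(x,r_0)\subset K$ for all $x$.
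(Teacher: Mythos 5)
Your proposal is correct and follows essentially the same route as the paper: pointwise control of $G_s$ via Proposition~\ref{Control-G} combined with the Ahlfors-regularity annular criterion for $\int_K d(x,y)^{\beta}\,\mu(dy)$ (the paper's Lemma~\ref{Integrabilite-dist}), including the same lower-bound argument for the divergence direction when $s\in(0,d_h/d_w)$. The only cosmetic difference is that you treat $s=d_h/d_w$ (logarithmic bound) and $s>d_h/d_w$ (boundedness of the continuous kernel on the compact $K\times K$) as separate cases, whereas the paper handles all $s\ge d_h/d_w$ at once via the bound $|G_s(x,y)|\le C\,d(x,y)^{-\gamma}$ for arbitrarily small $\gamma>0$; both work.
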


This is a consequence of the following lemma,  of Proposition \ref{Control-G} and of Proposition 2.6 in \cite{BL22-FGF}.

\begin{lemma} \label{Integrabilite-dist}  Let $\gamma \in\mathbb R$. 
Then for any $x\in K$, 
$$
\int_K\frac{ \mu(dy)}{d(x,y)^\gamma} <+\infty \iff \gamma < d_h.
$$
 Moreover,  for any $\gamma <d_h$,  
$$
\sup_{x\in K} \int_K\frac{ \mu(dy)}{d(x,y)^\gamma} <+\infty.
$$

\end{lemma}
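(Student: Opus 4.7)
The lemma is a standard consequence of the Ahlfors regularity \eqref{Ahlfors}, and the plan is to split into three regimes of $\gamma$, exploiting that the constants $c,C$ in \eqref{Ahlfors} do not depend on $x$, which automatically delivers the uniform bound.

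First, for $\gamma \le 0$: since $\mathrm{diam}(K) = 1$ we have $d(x,y)^{-\gamma} \le 1$, so the integral is bounded by $\mu(K) = 1$ uniformly in $x$. This case is automatically covered by the assertion $\gamma < d_h$ since $d_h > 0$.

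Second, for $0 < \gamma < d_h$, I would decompose $K \setminus \{x\}$ into the dyadic annuli
$$
A_n(x) = \{ y \in K : 2^{-(n+1)} < d(x,y) \le 2^{-n} \}, \qquad n \ge 0.
$$
On $A_n(x)$ the integrand is at most $2^{(n+1)\gamma}$, and $A_n(x) \subset B(x,2^{-n})$ together with the upper Ahlfors bound yields $\mu(A_n(x)) \le C\, 2^{-n d_h}$. Summing gives a geometric series of ratio $2^{\gamma - d_h} < 1$:
$$
\int_K \frac{\mu(dy)}{d(x,y)^\gamma} \le C\, 2^{\gamma} \sum_{n=0}^\infty 2^{n(\gamma - d_h)} < +\infty,
$$
with a bound uniform in $x$ since $C$ is.

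Third, for $\gamma \ge d_h$, I would obtain divergence via the layer-cake formula:
$$
\int_K \frac{\mu(dy)}{d(x,y)^\gamma} = \int_0^\infty \mu\bigl(\{ y : d(x,y) \le t^{-1/\gamma} \}\bigr)\, dt \ge \int_1^\infty \mu\bigl(B(x, t^{-1/\gamma})\bigr)\, dt.
$$
For $t \ge 1$ the radius $t^{-1/\gamma}$ lies in $(0,1]$, so the lower Ahlfors bound gives $\mu(B(x, t^{-1/\gamma})) \ge c\, t^{-d_h/\gamma}$; the remaining integral $\int_1^\infty t^{-d_h/\gamma}\, dt$ diverges precisely because $d_h/\gamma \le 1$.

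I do not anticipate a real obstacle. The only delicate point is the critical value $\gamma = d_h$: the upper-bound argument degenerates to a divergent harmonic-type series and the layer-cake lower bound to a logarithmic divergence, and these match to confirm that $\gamma < d_h$ is the sharp integrability threshold.
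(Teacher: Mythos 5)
Your proposal is correct, and both directions of the equivalence together with the uniform bound are fully established. For the convergent range $\gamma<d_h$ you give the standard dyadic-annuli argument with the upper Ahlfors bound; the paper instead simply invokes Proposition 2.7 of \cite{BL22-FGF} for this half, so there is no real divergence of method there, only that you supply the details. The genuine difference is in the divergence direction. The paper first reduces $\gamma\ge d_h$ to the critical case $\gamma=d_h$ via $d(x,y)^\gamma\le C\,d(x,y)^{d_h}$ on the compact $K$, and then runs a dyadic-annuli sum $\sum_j 2^{jd_h}\mu\bigl(B(x,2^{-j})\setminus B(x,2^{-j-1})\bigr)$ which it handles by an Abel-summation/telescoping rearrangement before applying the lower Ahlfors bound; this is needed because the lower bound controls balls, not annuli. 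Your layer-cake computation $\int_K d(x,y)^{-\gamma}\mu(dy)\ge\int_1^\infty\mu\bigl(B(x,t^{-1/\gamma})\bigr)\,dt\ge c\int_1^\infty t^{-d_h/\gamma}\,dt$ sidesteps the telescoping entirely, since the distribution-function sets are exactly balls, and it treats all $\gamma\ge d_h$ at once. This is a cleaner and slightly more robust route (the only cosmetic point being that the superlevel sets are open balls, which still satisfy the lower Ahlfors bound up to a harmless constant); what the paper's version buys in exchange is that it stays entirely within the elementary ball-measure estimates already used elsewhere in the paper.
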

\begin{proof}  By the proof of Proposition 2.7 in \cite{BL22-FGF}, if $ \gamma < d_h$, one has 
$$\forall x\in K, \ 
\int_K\frac{ \mu(dy)}{d(x,y)^\gamma} <+\infty
$$
and also 
$$
\sup_{x\in K} \int_K\frac{ \mu(dy)}{d(x,y)^\gamma} <+\infty.
$$
Let us now consider $\gamma \ge d_h$. Then by compactness of $K$, there exists a  positive constant $C$ such that 
\[\forall x,y\in K,\ 
d(x,y)^{\gamma}\le C d(x,y)^{d_h},
\]
and
it is therefore enough to prove that for any $x\in K$, 
\[
\int_K\frac{ \mu(dy)}{d(x,y)^{d_h}}=+\infty.
\]
We 
use   a dyadic  annuli decomposition as follows. 
Since ${\rm diam}\, K=1$,  one has for  every $N \ge 1$ and for any $x\in K$,  
\begin{align*}
 \int_K \frac{\mu(dy)}{d(x,y)^{d_h}} 
 & \ge \sum_{j=0}^{N} \int_{B\left(x, 2^{-j}\right)\setminus B\left(x, 2^{-j-1}\right) } \frac{\mu(dy)}{d(x,y)^{d_h}} \\
 & \ge   \sum_{j=0}^{N} 2^{ j d_h} \mu \left( B\left(x, 2^{-j}\right)\setminus B\left(x, 2^{-j-1}\right) \right) \\
 & \ge    \sum_{j=0}^{N} 2^{ j d_h}\left(  \mu \left( B\left(x, 2^{-j}\right) \right)-\mu \left( B\left(x, 2^{-j-1} \right)\right) \right)\\
  & \ge  \sum_{j=0}^{N} 2^{ j d_h}  \mu \left( B\left(x, 2^{-j}\right) \right)-\sum_{j=0}^{N} 2^{ j d_h}\mu \left( B\left(x, 2^{-j-1}\right) \right) \\
  &\ge  \mu\left(B\left(x,1\right)\right)-2^{Nd_h}\mu \left( B\left(x, 2^{-N-1} \right)\right)+(1-2^{-d_h})\sum_{j=1}^{N} 2^{ j d_h}  \mu \left( B\left(x, 2^{-j}\right) \right).
\end{align*}

By the Ahlfors regularity \eqref{Ahlfors} of the measure $\mu$, for any $x\in K$,  when $N \to +\infty$,
\[
\sum_{j=1}^{N} 2^{ j d_h}  \mu \left( B(x, 2^{-j}) \right) \to +\infty
\]
whereas $2^{Nd_h}\mu \left( B(x, 2^{-N-1} \right))$ remains bounded. Therefore for any $x\in K$, 
\[
\int_K\frac{ \mu(dy)}{d(x,y)^{d_h}}=+\infty,
\]
which concludes the proof.
\end{proof}

We are now ready to prove Lemma \ref{Integrabilite-G}. 

\begin{proof}[Proof of Lemma \ref{Integrabilite-G}] Let us first assume that $s\in (0,d_h/d_w)$. Then, since $\mu$ is a probability, by Proposition~\ref{Control-G} and Lemma~\ref{Integrabilite-dist}, 
$$\begin{array}{rcl}
\displaystyle \int_{K}\left|G_s(x,y)\right|^{p}\mu(dy) <+\infty 
&\iff&  \displaystyle\int_K\frac{\mu(dy)}{d(x,y)^{p\left(d_h-sd_w\right)}}  <+\infty \\[15pt]&\iff &\displaystyle p\left(d_h-sd_w\right)<d_h\\[15pt]
&\iff & s>\frac{(p-1)d_h}{p d_w}.
\end{array}
$$
Moreover, for $\max\left(0,\frac{(p-1)d_h}{pd_w}\right)<s< d_h/d_w$,  by Proposition \ref{Control-G} and Lemma \ref{Integrabilite-dist}, there exists a  positive constant $C$ such that 
$$
\sup_{x\in K}\int_{K}\left|G_s(x,y)\right|^{p}\mu(dy) \le C\sup_{x\in K}\displaystyle\int_K\frac{\mu(dy)}{d(x,y)^{p\left(d_h-sd_w\right)}}  <+\infty. 
$$
Now for $s\ge d_h/d_w$,  using Proposition 2.6  in \cite{BL22-FGF} and compactness of $K$, for any $\gamma \in (0,d_h/p)$, one sees that there exists a  positive constant $C$ (depending on $\gamma)$ such that for any $x,y\in K$, $x\ne y$
$$
\left|G_s(x,y)\right| \le \frac{C}{d(x,y)^\gamma}.
$$
Then, since $p \gamma <d_h$,  by Lemma \ref{Integrabilite-dist},
$$
\sup_{x\in K}\int_{K}\left|G_s(x,y)\right|^{p}\mu(dy) \le C\sup_{x\in K}\displaystyle\int_K\frac{\mu(dy)}{d(x,y)^{p\gamma}}  <+\infty,$$
which concludes the proof. 
\end{proof}

\vskip 5pt

To conclude this section, next theorem is interested in the smoothness of  the Riesz kernel $G_s$, that is in the H\"older property it satisfies. This property is the key property to study sample paths properties of fractional stable random fields, which will be defined as an integral of the kernel $G_s$ with respect to a stable random measure (see Theorems \ref{Existence-Density} and  \ref{SPR} in next section). Next theorem is only interested in the case where $s>d_h/d_w$ since otherwise, Proposition \ref{Control-G} implies that the function $G_s$ is unbounded.
\begin{theorem}\label{Holder-Riesz}\hfill

\begin{enumerate}
\item  For any $s \in \left( \frac{d_h}{d_w} , 1\right)$, there exists a constant $C>0$ such  that for every  $x,y,z \in K$,
\[
| G_s (x,z)-G_s(y,z)|  \le Cd(x,y)^{ sd_w-d_h} .
\]
\item  For $s \ge 1$, there exists a  constant $C>0$ such  that for every  $x,y,z \in K$,
\[
| G_s (x,z)-G_s(y,z)|  \le Cd(x,y)^{ d_w-d_h} \max\left(\left|\ln d(x,y)\right|,1 \right).
\]
where  $0^{ d_w-d_h} \max\left(\left|\ln 0\right|,1 \right)=0$ by convention.
\end{enumerate}
\end{theorem}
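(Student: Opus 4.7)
The plan is to use the subordination formula
\[
G_s(x,z) - G_s(y,z) = \frac{1}{\Gamma(s)}\int_0^{+\infty} t^{s-1}\bigl(p_t(x,z)-p_t(y,z)\bigr)\,dt
\]
and to split the $t$-integral at $t_0 := d(x,y)^{d_w} \in (0,1]$ and at $1$. The two key ingredients are the sub-Gaussian upper bound $p_t(x,z) \le c_3\, t^{-d_h/d_w}$ from \eqref{eq:subGauss-upper} and a H\"older estimate for the heat kernel on the gasket of the form
\[
|p_t(x,z)-p_t(y,z)| \le C_\theta\, t^{-d_h/d_w}\bigl(d(x,y)/t^{1/d_w}\bigr)^{\theta},\qquad d(x,y)^{d_w}\le t,
\]
valid for any $\theta \le d_w - d_h$, which follows from the analysis of Barlow--Perkins \cite{BP88}.

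On the small-time piece $(0,t_0)$, the triangle inequality combined with the sub-Gaussian upper bound yields $|p_t(x,z)-p_t(y,z)| \le 2c_3\, t^{-d_h/d_w}$; the integral of $t^{s-1-d_h/d_w}$ converges at the origin precisely because $s > d_h/d_w$ and contributes a term $\le C\, d(x,y)^{sd_w-d_h}$. On the intermediate piece $(t_0, 1)$, I would apply the H\"older estimate, the choice of $\theta$ depending on the case. For $s\in(d_h/d_w, 1)$, picking any $\theta\in(sd_w-d_h,\, d_w-d_h)$ makes the integrand $d(x,y)^\theta\, t^{s-1-(d_h+\theta)/d_w}$ have a time exponent strictly less than $-1$, so the integral is $\le C\,t_0^{s-(d_h+\theta)/d_w}$ and the $\theta$-dependence cancels after multiplying by $d(x,y)^\theta$, again giving $C\,d(x,y)^{sd_w-d_h}$. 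For $s \ge 1$, taking $\theta = d_w - d_h$ reduces the integral to $\int_{t_0}^1 t^{s-2}\,dt$, which equals $d_w\,|\ln d(x,y)|$ at $s=1$ and is bounded for $s > 1$, producing the logarithmic factor of case (ii).

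On the large-time piece $(1, +\infty)$, I would use the spectral expansion \eqref{spectral} to write $p_t(x,z) - p_t(y,z) = \sum_{j\ge 1} e^{-\lambda_j t}(\Phi_j(x)-\Phi_j(y))\Phi_j(z)$, combined with the H\"older regularity of the eigenfunctions on $K$ and at most polynomial growth of their H\"older and $L^\infty$ seminorms in $\lambda_j$ (see \cite{BP88,kigami}). For $t \ge 1$ the factor $e^{-\lambda_j t}$ absorbs these polynomial growths termwise, giving $|p_t(x,z) - p_t(y,z)| \le C\, d(x,y)^{d_w-d_h}\, e^{-\lambda_1 t/2}$; integration against $t^{s-1}$ then contributes at most $C\, d(x,y)^{d_w-d_h}$, which is dominated by the previous bounds in both cases.

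\textbf{Main obstacle.} The delicate point is the endpoint $\theta = d_w - d_h$ of the H\"older heat kernel estimate required in case (ii), which corresponds exactly to the critical H\"older exponent of eigenfunctions on the gasket and is the source of the logarithmic correction. Should this sharp endpoint estimate prove inconvenient, an alternative is to reduce case (ii) to case (i) via the composition identity \eqref{composition laplace frac}, writing $G_s$ for $s\ge 1$ as a convolution of a subcritical Riesz kernel with $G_{s-s'}$ (for some $s' \in (d_h/d_w, 1)$) and invoking the convolution estimates to be established in Appendix~A.
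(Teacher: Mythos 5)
Your proposal is correct and follows the same skeleton as the paper's argument: the subordination formula, a split of the time integral at $\delta=d(x,y)^{d_w}$, the crude bound $|p_t(x,z)-p_t(y,z)|\le Ct^{-d_h/d_w}$ on the small-time piece, and a H\"older estimate for the heat kernel on the remaining range. Three points of comparison. First, the ``delicate endpoint'' $\theta=d_w-d_h$ that you flag as the main obstacle is precisely the estimate the paper imports wholesale, namely $|p_t(x,z)-p_t(y,z)|\le C\,d(x,y)^{d_w-d_h}/t$ valid for \emph{all} $t>0$ (Lemma~3.4 of \cite{ABCRST3}, recorded as \eqref{pt-Holder-property}); it is used in both cases, so in case~(i) your strictly subcritical choice of $\theta$ is an unnecessary detour: with the endpoint bound the tail integral $\int_\delta^{+\infty}t^{s-2}\,dt$ already converges because $s<1$, and no separate large-time piece is needed. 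Second, for $s\ge 1$ the paper takes what you list as the fallback: it proves $s=1$ directly (splitting at $\delta$ and $1/\delta$ and using the plain decay $|p_t(x,z)-p_t(y,z)|\le Ce^{-\lambda_1 t}$ beyond $1/\delta$), then reduces $s>1$ to $s=1$ via the convolution identity of Lemma~\ref{conv-G} combined with the integrability of $G_{s-1}$ from Lemma~\ref{Integrabilite-G}. Third, your direct treatment of all $s\ge 1$ does work, but the large-time product bound $|p_t(x,z)-p_t(y,z)|\le C\,d(x,y)^{d_w-d_h}e^{-\lambda_1 t/2}$ is not a consequence of the triangle inequality alone and needs a short additional argument --- either the polynomial eigenfunction estimates you sketch, or more simply the factorization $p_t(x,z)-p_t(y,z)=\int_K\bigl(p_1(x,u)-p_1(y,u)\bigr)\bigl(p_{t-1}(u,z)-1\bigr)\mu(du)$ for $t\ge 2$, which combines \eqref{pt-Holder-property} with exponential convergence to equilibrium. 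Both routes yield the stated bounds; the paper's convolution reduction avoids any large-time refinement at the price of the Fubini-type identity proved in the appendix.
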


\begin{remark}

In the range $s \in \left( \frac{d_h}{d_w} , 1\right)$, it is reasonable to conjecture that the exponent $sd_w-d_h$ is optimal. Indeed, by its definition $x\to G_s(x,z)$ is in the domain of the operator $(-\Delta)^s$ and, as in Section 9 of \cite{BP88}, one can reasonably conjecture that the only functions in the domain of the operator $(-\Delta)^s$ which are $sd_w-d_h+\varepsilon$ H\"older continuous are the constant functions. We also refer to \cite{BST99} for further comments on the regularity of functions associated with the Laplacian. 
\end{remark}

\begin{proof}  Let $x,y,z\in K$. Without loss of restriction, we can assume $x\ne y$. 

\begin{enumerate} 
\item Let us first assume that $s\in (d_h/d_w,1)$. Then, 
for $\delta \in (0,1]$
$$
G_s(x,z)-G_s(y,z)=\frac{1}{\Gamma(s)}\int_{0}^{+\infty} t^{s-1} \left(p_t(x,z)-p_t(y,z) \right)dt=I_1(x,y,z)+I_2(x,y,z)
$$
where 
$$
\left\{\begin{array}{rcl}
\displaystyle I_1(x,y,z)&=&\displaystyle \frac{1}{\Gamma(s)}\int_{0}^{\delta} t^{s-1} \left(p_t(x,z)-p_t(y,z) \right)dt\\[15pt]
\displaystyle I_2(x,y,z)&=&\displaystyle\frac{1}{\Gamma(s)}\int_{\delta}^{+\infty} t^{s-1} \left(p_t(x,z)-p_t(y,z) \right)dt.\\[15pt]
\end{array}
\right. 
$$

In the following, $C$ is a generic  positive constant, independent from $(\delta,x,y,z)$, which value may change at each line. 

First applying the triangle inequality and the upper bound heat kernel \eqref{eq:subGauss-upper}, for any $t\in (0,1)$, 
$$
\left|p_t(x,z)-p_t(y,z) \right| \le C t^{-d_h/d_w} 
$$
so that 
$$\begin{array}{rcl}
\displaystyle \left|I_1(x,y,z)\right|&\le& C\displaystyle \int_{0}^{\delta} t^{s-1-d_h/d_w} dt
\\[15pt]
&\le & C\delta^{s-d_h/d_w}
\end{array}
$$
since $s-d_h/d_w>0$. 

Moreover, by  Lemma 3.4 in \cite{ABCRST3}, for any $t>0$
\begin{equation}
\label{pt-Holder-property}
\left|p_t(x,z)-p_t(y,z) \right| \le \frac{C d(x,y)^{d_w-d_h}}{t} 
\end{equation}
so that 
$$
\begin{array}{rcl}
\displaystyle \left|I_2(x,y,z)\right|&\le&  \displaystyle  Cd(x,y)^{d_w-d_h} \int_{\delta}^{+\infty} t^{s-2} dt
\\[15pt]
&\le& Cd(x,y)^{d_w-d_h} \delta^{s-1}
\end{array}
$$
since $s<1$. 

Hence, by combining the obtained upper bounds for $I_1$ and $I_2$ and choosing $\delta=d(x,y)^{d_w}\in (0,1]$, 
$$
\begin{array}{rcl}
\displaystyle \left|G_s(x,z)-G_s(y,z)\right|
&\le& Cd(x,y)^{sd_w-d_h} .
\end{array}
$$

\item \begin{enumerate}
\item We first establish Assertion 2. for $s=1$. 
Then, for $\delta\in(0,1]$
$$
G_1(x,z)-G_1(y,z)=\int_{0}^{+\infty}  \left(p_t(x,z)-p_t(y,z) \right)dt=I_1(x,y,z)+I_2(x,y,z)+I_3(x,y,z)
$$
where 
$$
\left\{\begin{array}{rcl}
\displaystyle I_1(x,y,z)&=&\displaystyle \int_{0}^{\delta}  \left(p_t(x,z)-p_t(y,z) \right)dt\\[15pt]
\displaystyle I_2(x,y,z)&=&\displaystyle\int_{\delta}^{1/\delta}  \left(p_t(x,z)-p_t(y,z) \right)dt\\[15pt]
\displaystyle I_3(x,y,z)&=&\displaystyle\int_{1/\delta}^{+\infty} \left(p_t(x,z)-p_t(y,z) \right)dt.
\end{array}
\right. 
$$
In the following, as previously, $C$ is a generic  positive constant, independent from $(\delta,x,y,z)$, which value may change at each line. 

First the same arguments as for $s<1$ lead to 
$$\begin{array}{rcl}
\displaystyle \left|I_1(x,y,z)\right|&\le&  C\delta^{1-d_h/d_w}
.\end{array}
$$
Moreover, by applying \eqref{pt-Holder-property},
$$
\begin{array}{rcl}
\displaystyle \left|I_2(x,y,z)\right|&\le&  \displaystyle  Cd(x,y)^{d_w-d_h} \int_{\delta}^{1/\delta} \frac{dt}{t}
\\[15pt]
&\le& -Cd(x,y)^{d_w-d_h} \ln \delta.
\end{array}
$$
In addition, applying the triangle inequality and Lemma 2.3 in \cite{BL22-FGF}, for $t\ge 1$
$$
\left|p_t(x,z)-p_t(y,z) \right|\le C \exp\left(-\lambda_1 t \right)
$$
where $\lambda_1>0$ is the lower eigenvalue of $-\Delta$. Hence, 
$$\begin{array}{rcl}
\displaystyle \left|I_3(x,y,z)\right|&\le& C \displaystyle \int_{1/\delta}^{+\infty}  \exp\left(-\lambda_1 t \right) dt
\\[15pt]
&\le & C\exp\left(-\frac{\lambda_1}{\delta}\right)
\\[15pt]
&\le &  C \delta^{1-d_h/d_w}.
\end{array}
$$
Therefore, combining the previous upper bounds and choosing $\delta =d(x,y)^{d_w} \in (0,1]$, 
$$\begin{array}{rcl}
\displaystyle
\left|G_1(x,z)-G_1(y,z)\right|&\le & C d(x,y)^{d_w-d_h}\left(1+   \left| \ln {d(x,y)}\right|\right)\\[15pt]
&\le & C d(x,y)^{d_w-d_h} \max\left(\left|\ln {d(x,y)}\right|,1\right).
\end{array}
$$

\item Let us now assume that $s>1$. Then, by the  Lemma \ref{conv-G}, 
$$
G_s(x,z)-G_s(y,z)=\int_{K} \left(G_1(x,u)-G_1(y,u)\right)G_{s-1}(u,z)\,\mu(du). 
$$
Hence, since $G_1$ satisfies Assertion 2., 
$$
\begin{array}{rcl}
\displaystyle
\left|G_s(x,z)-G_s(y,z)\right|&\le & \displaystyle C d(x,y)^{d_w-d_h} \max\left(\left|\ln {d(x,y)}\right|,1\right) \int_{K} \left|G_{s-1}(u,z)\right|\mu(du).
\end{array}
$$
 Moreover, since $s-1>0$, by Lemma \ref{Integrabilite-G} and symmetry of $G_{s-1}$, 
$$
\sup_{z\in K} \int_{K} \left|G_{s-1}(u,z)\right|\mu(du)=\sup_{z\in K} \int_{K} \left|G_{s-1}(z,u)\right|\mu(du) <+\infty,
$$
 which concludes the proof.
\end{enumerate}

\end{enumerate}
\end{proof}

We conclude the section with the following lemma.

\begin{lemma}\label{conv-G}
For any $s>0$ and any $t>0$, 
$$
G_{s+t}(x,y)=\int_K G_s(x,u)G_{t}(u,y)\mu(du)
$$
for any $x,y\in K$ such that $x\ne y$. This equation also holds for $x=y$ when $s+t>d_h/d_w$. 
\end{lemma}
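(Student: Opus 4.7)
The plan is to reduce the identity to the semigroup property of the heat kernel via the classical Beta function identity
$$
\frac{r^{s+t-1}}{\Gamma(s+t)}=\frac{1}{\Gamma(s)\Gamma(t)}\int_{0}^{r} a^{s-1}(r-a)^{t-1}\,da.
$$
Substituting this into the definition \eqref{green function} of $G_{s+t}(x,y)$ and performing the change of variables $(r,a)\mapsto(a,b)$ with $b=r-a$ turns
$$
\Gamma(s+t)\,G_{s+t}(x,y)=\int_{0}^{+\infty} r^{s+t-1}(p_r(x,y)-1)\,dr
$$
into the double integral
$$
\frac{\Gamma(s+t)}{\Gamma(s)\Gamma(t)}\int_{0}^{+\infty}\!\!\int_{0}^{+\infty} a^{s-1}b^{t-1}(p_{a+b}(x,y)-1)\,da\,db.
$$
For $x\ne y$ the sub-Gaussian upper bound in \eqref{eq:subGauss-upper} guarantees rapid decay of $|p_r(x,y)-1|$ near $r=0$, while the spectral representation \eqref{spectral} together with the spectral gap $\lambda_1>0$ yields exponential decay as $r\to+\infty$; so this double integral converges absolutely. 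For $x=y$ with $s+t>d_h/d_w$ the same estimate shows the expression is still absolutely convergent.

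Next I would exploit the semigroup identity $p_{a+b}(x,y)=\int_K p_a(x,u)p_b(u,y)\,\mu(du)$ together with the stochastic completeness $\int_K p_a(x,u)\,\mu(du)=1$. Expanding the product $(p_a(x,u)-1)(p_b(u,y)-1)$ and integrating in $u$ gives the crucial identity
$$
p_{a+b}(x,y)-1=\int_{K}\bigl(p_a(x,u)-1\bigr)\bigl(p_b(u,y)-1\bigr)\,\mu(du).
$$
Inserting this into the double integral and formally interchanging the order of integration with the $\mu$-integral produces exactly
$$
\int_{K}\left[\frac{1}{\Gamma(s)}\int_{0}^{+\infty}\!a^{s-1}(p_a(x,u)-1)\,da\right]\left[\frac{1}{\Gamma(t)}\int_{0}^{+\infty}\!b^{t-1}(p_b(u,y)-1)\,db\right]\mu(du),
$$
which is $\int_K G_s(x,u)G_t(u,y)\,\mu(du)$ by the definition of the Riesz kernels.

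The main obstacle is to justify this Fubini step, because the inner time integrals are not absolutely convergent for every $u$: near $u=x$ the bound $p_a(x,u)\le C a^{-d_h/d_w}$ forces the small-$a$ tail of $\int_0^{+\infty} a^{s-1}|p_a(x,u)-1|\,da$ to blow up when $s\le d_h/d_w$. I would handle this by splitting the $a$-integrals at $a=1$: on $(1,+\infty)$ the uniform spectral-gap bound $|p_a(x,u)-1|\le C e^{-\lambda_1 a}$ makes everything bounded, while on $(0,1)$ I bound $|p_a(x,u)-1|\le p_a(x,u)+1$ and carry out the $\mu(du)$-integration \emph{first}, using the semigroup property $\int_K p_a(x,u)p_b(u,y)\,\mu(du)=p_{a+b}(x,y)$ to reduce the resulting triple integral back to $\int_0^1\!\int_0^1 a^{s-1}b^{t-1}(p_{a+b}(x,y)+\text{remainders})\,da\,db$, which is finite for $x\ne y$ by the first step and, in the case $x=y$ with $s+t>d_h/d_w$, can be controlled by H\"older's inequality in $u$ (choosing conjugate exponents $p,q$ with $s>(p-1)d_h/(pd_w)$ and $t>(q-1)d_h/(qd_w)$, which is possible exactly because $s+t>d_h/d_w$) combined with Lemma~\ref{Integrabilite-G}. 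Once absolute integrability of the triple integral is established, Fubini applies and the identity follows. The $x=y$ case can alternatively be obtained by letting $y\to x$ in the identity for $x\ne y$ and invoking dominated convergence with the same H\"older-type envelope.
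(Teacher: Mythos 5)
Your proof is correct, but it follows a genuinely different route from the paper's. The paper proves the identity by duality: it uses the composition rule \eqref{composition laplace frac} together with Lemma \ref{riesz-kernel laplace} to show that $\int_K G_{s+t}(x,y)f(y)\mu(dy)=\int_K\bigl(\int_K G_s(x,u)G_t(u,y)\mu(du)\bigr)f(y)\mu(dy)$ for every $f\in\mathcal{S}(K)$, then tests against the mollifiers $f=p_\tau(z,\cdot)-1$ and lets $\tau\to 0$ to get the identity for $\mu$-a.e.\ $z$, and finally upgrades to every $z\ne x$ via a separate continuity lemma for $G_s$ and for $z\mapsto\int_K G_s(x,u)G_t(u,z)\mu(du)$. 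You instead compute directly: the Beta-integral factorization of $r^{s+t-1}$, the Chapman--Kolmogorov equation, and stochastic completeness combine into the algebraic identity $p_{a+b}(x,y)-1=\int_K(p_a(x,u)-1)(p_b(u,y)-1)\mu(du)$, after which a single Fubini interchange yields the result pointwise at every pair, with no mollification and no a.e.-to-everywhere step. Your justification of that Fubini step is sound: the large-time regions are handled by the spectral-gap bound $|p_a(x,u)-1|\le Ce^{-\lambda_1 a}$, and for the small-time region integrating in $u$ first via $|p_a-1|\le p_a+1$ reduces matters to $\int_0^1\int_0^1 a^{s-1}b^{t-1}(p_{a+b}(x,y)+3)\,da\,db$, finite off the diagonal and, on the diagonal under $s+t>d_h/d_w$, controllable either by your H\"older splitting (the constraints $s>\frac{(p-1)d_h}{pd_w}$, $t>\frac{(q-1)d_h}{qd_w}$ with $\frac1p+\frac1q=1$ are jointly feasible precisely when $s+t>d_h/d_w$) or by $(a+b)^{-d_h/d_w}\le a^{-\theta d_h/d_w}b^{-(1-\theta)d_h/d_w}$. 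Your approach is more elementary and self-contained (it needs only heat-kernel estimates, not the operator identity or the $L^1$-mollification fact from \cite{BauBook}), at the cost of a more delicate integrability bookkeeping; the paper's approach recycles machinery it has already built and defers the pointwise identification to a short continuity argument. One cosmetic slip: for $x\ne y$ the quantity $|p_r(x,y)-1|$ does not decay as $r\to 0$ (it tends to $1$, since $p_r(x,y)\to 0$); what you actually need, and what holds, is merely that it stays bounded near $r=0$, which suffices because $r^{s+t-1}$ is integrable there.
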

\begin{proof} 
See Appendix \ref{conv-G-proof}.

\end{proof}

\section{Neumann Fractional Stable  random fields}

\subsection{Background on stable random measures and LePage Series}

Given a probability space  $\left(\Omega,\mathcal{F},\mathbb{P}\right)$ and $\alpha\in (0,2]$, we consider, on the measurable space $(K,\mathcal{K})$,    
a  real-valued symmetric stable random measure $W_{K,\alpha}$  
with control measure $\mu$.  Let us recall the definition of such a random measure following  \cite[Section 3.3]{taqqu}. We first consider 
$$
\mathcal{K}_0=\{A\in \mathcal{K},\, \mu(A)<+\infty\}.
$$
Then,  $W_{K,\alpha}$ is such that the three following properties hold:   
\begin{itemize}
	\item the random measure $W_{K,\alpha}$ is $\sigma$-additive, i.e. for any sequence  $(A_n)_{n\in\mathbb{N}} \in \mathcal{K}_0^\mathbb{N}$ of pairwise disjoint measurable sets such that $\bigcup_{n\in\mathbb N} A_n\in  \mathcal{K}_0$, 
 $$
 W_{K,\alpha}\left(\bigcup_{n\in\mathbb N} A_n\right)=\sum_{n\in\mathbb N}W_{K,\alpha}\left(A_n\right)\ \textrm{almost surely};
 $$
	\item $W_{K,\alpha}$ is independently scattered, that is for any sequence $(A_n)_{n\in\mathbb{N}} \in \mathcal{K}_0^\mathbb{N}$ of pairwise disjoint measurable sets, the random variables $W_{K,\alpha}(A_n)$, $n\in\mathbb{N}$, are independent;
 \item  for any $A\in\mathcal{K}_0$,   $W_{K,\alpha}(A)$ is a   symmetric $\alpha$-stable real-valued random variable  with scale parameter $\left(\mu(A)\right)^{1/\alpha}$, that is its characteristic function is given by
	$$
\mathbb{E}\left(\textup{e}^{iu W_{K,\alpha}(A)}\right)=\textup{e}^{- \left|u\right|^\alpha {\mu(A)}}, \quad u \in \mathbb R. 
	$$
\end{itemize}
Especially, if $\alpha=2$, then $W_{K,\alpha}$ is a real-valued Gaussian random measure. \\

Then for any real-valued $f\in L^\alpha(K,\mu)$, the stochastic integral 
$$
W_{K,\alpha}(f)=\int_{K} f(x)  \,W_{K,\alpha}(dx)
$$
is well-defined and is a symmetric $\alpha$-stable real-valued  random  variable with scale parameter 
$$
\|f\|_{L^\alpha(K,\mu)}=\left(\int_{K}\left|f(x)\right|^\alpha \,\mu(dx)\right)^{1/\alpha}
$$
so that, its characteristic function is then given by 
	$$
\mathbb{E}\left(\textup{e}^{iu W_{K,\alpha}(f)}\right)=\textup{e}^{- \left|u\right|^\alpha \|f\|_{L^\alpha(K,\mu)}^\alpha}, \quad u \in \mathbb R.
	$$ We refer to 
	\cite[Section 3.4]{taqqu}
	 for details on the construction of the stochastic integral $W_{K,\alpha}(f)$ and on its properties. \\
 
 In addition, when $\alpha\in (0,2)$, the stochastic integral $W_{K,\alpha}(f)$ can also be represented as a series, and such a series can be chosen to consider $W_{K,\alpha}(f)$ as a conditionally Gaussian random variable. This representation will be a key point to study the sample paths smoothness of the  fractional stable fields considered in this paper. Let us then now give  this LePage series representation. To do so, let us assume that the following assumption holds.

 \begin{assumption}
     \label{AssumptionLePage}
  $\left(T_n\right)_{n\ge 1}$, $\left(g_n\right)_{n\ge 1}$ and $\left(\xi_n\right)_{n\ge 1}$ are three  independent sequences of random variables such that 
 
 \begin{itemize}
 \item $T_n$ is the $n$th arrival time of a Poisson process with intensity 1;
 \item $g_n$, $n\ge 1$, are {i.i.d.} real-valued centered  Gaussian random variables;
 \item $\xi_n$, $n\ge 1$, are {i.i.d.} real-valued random variables with common distribution $\mu$. 
 \end{itemize}
 \end{assumption}
Under Assumption \ref{AssumptionLePage} and for $\alpha \in (0,2)$,  according to Equation (3.11.2) in~\cite[Section 3.9.1]{taqqu}, 
for any function $f\in L^\alpha(K,\mu)$, the series 
 \begin{equation}
 \label{LePageSeries}
 S(f)=D_\alpha\sum_{n=1}^{+\infty} T_n^{-{1/\alpha}}
 f\left(\xi_n\right)g_n ,
 \end{equation}
 where 
 \begin{equation}
     \label{csteCalpha}
D_\alpha=\left(\mathbb{E}\left(\left|g_1\right|^\alpha\right)\int_0^{+\infty} x^{-\alpha}\sin(x)\, dx\right)^{-{1}/{\alpha}} ,
 \end{equation}
 converges almost surely and 
\begin{equation}
 \label{SeriesVA}
 W_{K,\alpha}(f)\stackrel{(d)}{=} S(f).
 \end{equation}

Note also that, conditionally to $(T_n,\xi_n)_{n\ge 1}$, $S(f)$ is a centered real-valued  Gaussian random variable with variance
$$
\sigma^2(f)=D_\alpha^2\, \mathbb{E}\left(g_1^2\right) \,\sum_{n=1}^{+\infty} T_n^{-{2/\alpha}} 
\left|f\left(\xi_n\right)\right|^2. 
$$

\subsection{Definition of the Neumann Fractional Stable Field}

We first introduce the Neumman fractional stable field as a random variable which takes its values in  the space $\mathcal{S}'(K)$ of tempered distributions associated with the space $\mathcal{S}(K)$ of test functions. Next theorem, which states its existence,  extends Theorem 2.12 of \cite{BC22-DFGF} considering the Neumann Laplacian and stable random variables.

\begin{theorem}
Let $s>0$ and $\alpha \in (0,2]$. There exists a probability measure $\nu_{s,\alpha}$ on $\mathcal{S}'(K)$ such that for every $f \in \mathcal{S}(K)$
\[
\int_{\mathcal{S}'(K)} e^{i X(f)} \nu_{s,\alpha} (dX)=\textup{e}^{-  \|(-\Delta )^{-s} f\|_{L^\alpha(K,\mu)}^\alpha}.
\]
\end{theorem}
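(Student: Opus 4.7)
The proof is a direct application of the Bochner--Minlos theorem on the nuclear space $\mathcal{S}(K)$. Define the candidate characteristic functional
$$
\Phi(f) = \exp\bigl(-\|(-\Delta)^{-s}f\|_{L^\alpha(K,\mu)}^\alpha\bigr), \qquad f \in \mathcal{S}(K),
$$
which is well defined since $(-\Delta)^{-s}f \in \mathcal{S}(K) \subset C(K) \subset L^\alpha(K,\mu)$ (recall that $\mu$ is a probability measure), and satisfies $\Phi(0)=1$. Bochner--Minlos then reduces the statement to verifying that $\Phi$ is positive definite and continuous at $0$ in the nuclear topology of $\mathcal{S}(K)$.

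Positive definiteness is immediate from the existence of the stable random measure $W_{K,\alpha}$: the map $X(f):= W_{K,\alpha}((-\Delta)^{-s}f)$ is linear in $f\in\mathcal{S}(K)$, and the formula for the characteristic function of a symmetric $\alpha$-stable integral (Section 3.1) gives $\mathbb{E}[e^{iX(f)}]=\Phi(f)$. Linearity of $X$ then yields
$$
\sum_{j,k} z_j \bar z_k \, \Phi(f_j-f_k) \;=\; \mathbb{E}\Bigl|\sum_j z_j e^{iX(f_j)}\Bigr|^2 \;\ge\; 0
$$
for any $f_1,\dots,f_n\in\mathcal{S}(K)$ and $z_1,\dots,z_n\in\mathbb{C}$.

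The main technical point is continuity of $\Phi$ at $0$. Using $|1-e^{-x}|\le x$ for $x\ge 0$ together with $\|g\|_{L^\alpha}\le \|g\|_{L^\infty}$ (since $\mu$ is a probability measure), it suffices to bound $\|(-\Delta)^{-s}f\|_{L^\infty}$ by some seminorm $\|f\|_k = \|(-\Delta)^k f\|_{L^\infty}$. Two ingredients enter: first, the eigenfunction bound $\|\Phi_j\|_{L^\infty}\le C\lambda_j^{d_h/(2d_w)}$, obtained by setting $x=y$ and $t=1/\lambda_j$ in the diagonal spectral identity \eqref{spectral} combined with the upper bound in \eqref{eq:subGauss-upper}; second, the identity $a_j(f)=\lambda_j^{-k}a_j((-\Delta)^k f)$ with $a_j(g):=\int_K \Phi_j g\,d\mu$, valid for $f\in\mathcal{S}(K)$, which combined with $|a_j(g)|\le \|\Phi_j\|_{L^\infty}\|g\|_{L^\infty}$ gives $|a_j(f)|\le C\lambda_j^{d_h/(2d_w)-k}\|f\|_k$. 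Summing the uniformly convergent spectral expansion of $(-\Delta)^{-s}f$ produces
$$
\|(-\Delta)^{-s}f\|_{L^\infty} \;\le\; C\,\|f\|_k \sum_{j\ge 1} \lambda_j^{d_h/d_w - s - k},
$$
which is finite as soon as $k$ is chosen large enough that $s+k-d_h/d_w > d_h/d_w$, by the Weyl-type criterion $\sum_j\lambda_j^{-\beta}<\infty \iff \beta>d_h/d_w$ recalled before Definition \ref{FLaplacian}. This yields the required continuity and Bochner--Minlos delivers $\nu_{s,\alpha}$. The most delicate step is precisely this quantitative estimate: everything else is formal, but controlling the eigenfunctions and using the sub-Gaussian heat kernel bounds together with the summability criterion is what makes the continuity of $\Phi$ work in the fractal setting.
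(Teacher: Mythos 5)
Your proposal is correct and follows the same overall strategy as the paper: Bochner--Minlos on the nuclear space $\mathcal{S}(K)$, with positive definiteness obtained exactly as in the paper by writing $\Phi(f)=\mathbb{E}\bigl[e^{iW_{K,\alpha}((-\Delta)^{-s}f)}\bigr]$ and expanding the square. Where you diverge is the continuity of $\Phi$ at $0$. You prove the strong estimate $\|(-\Delta)^{-s}f\|_{L^\infty}\le C\|f\|_k$ via the eigenfunction bound $\|\Phi_j\|_{L^\infty}\le C\lambda_j^{d_h/(2d_w)}$ (from the on-diagonal heat kernel bound at $t=1/\lambda_j$), the identity $a_j(f)=\lambda_j^{-k}a_j((-\Delta)^k f)$, and the Weyl-type summability $\sum_j\lambda_j^{-\beta}<\infty$ for $\beta>d_h/d_w$; this is correct (modulo the harmless remark that $t=1/\lambda_j\in(0,1)$ only for all but finitely many $j$), but it is much more than is needed. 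The paper instead observes that since $\mu$ is a probability measure and $\alpha\le 2$, one has $\|(-\Delta)^{-s}f\|_{L^\alpha}\le\|(-\Delta)^{-s}f\|_{L^2}\le\lambda_1^{-s}\|f\|_{L^2}\le\lambda_1^{-s}\|f\|_0$, where $\|f\|_0=\|f\|_{L^\infty}$ is already one of the defining seminorms of $\mathcal{S}(K)$; continuity at $0$ is then immediate. Your route buys a genuinely stronger conclusion (a quantitative sup-norm bound on $(-\Delta)^{-s}f$ in terms of the seminorms, essentially re-proving that $(-\Delta)^{-s}$ maps $\mathcal{S}(K)$ to itself), at the cost of invoking the sub-Gaussian heat kernel estimates where a one-line Cauchy--Schwarz/Jensen argument suffices.
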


\begin{proof} 
By the Bochner-Minlos theorem in nuclear spaces (see \cite{MR0187273}, Theorem A) it is enough to prove that the functional
\[
\varphi: f \to \exp \left(-\int_K | (-\Delta)^{-s} f |^\alpha d\mu\right),
\]
which is defined on $\mathcal{S}(K)$ since $(-\Delta )^{-s} f\in \mathcal{S}(K)\subset\mathcal{C}(K)$ for $f\in \mathcal{S}(K)$,   is continuous  at $0$ and positive definite.  Let us first prove it is positive definite. Let $a_1,\cdots,a_n \in \mathbb{C}$ and $f_1,\cdots,f_n \in \mathcal{S}(K)$. We have 
\begin{align*}
\sum_{\ell,j=1}^n a_\ell \overline{a_j}\varphi(f_\ell-f_j)
 =& \sum_{\ell,j=1}^n a_\ell \overline{a_j} \exp \left(-\int_K | (-\Delta)^{-s} f_\ell - (-\Delta)^{-s} f_j |^\alpha d\mu\right) \\
 =&\sum_{\ell,j=1}^n a_\ell \overline{a_j} \mathbb{E} \left( \exp \left( iW_{K,\alpha} \left[ (-\Delta)^{-s} f_\ell-(-\Delta)^{-s} f_j)  \right]\right)\right)\\
 =&\sum_{\ell,j=1}^n a_\ell \overline{a_j}\mathbb{E} \left( \exp \left( iW_{K,\alpha} \left[ (-\Delta)^{-s} f_\ell \right]\right)\exp \left(- iW_{K,\alpha} \left[ (-\Delta)^{-s} f_j\right]\right)\right) \\
 =&{\mathbb{E}\left(\left| \sum_{\ell=1}^n a_\ell \exp \left( iW_{K,\alpha} \left[ (-\Delta)^{-s} f_\ell \right]\right)\ \right|^2\right)} \ge 0.
\end{align*}
Next, we prove that $\varphi$ is continuous at 0.  Since $\mu$ is a probability measure and $0<\alpha \le 2$, for every $f \in \mathcal{S}(K)\subset L^2(K,\mu)$, 
 \[
 \left(\int_K | (-\Delta)^{-s} f |^\alpha d\mu\right)^{2/\alpha} \le \int_K | (-\Delta)^{-s} f |^2 d\mu \le \lambda_1^{-2s} \int_K f^2 d\mu
 \]
 where the last inequality follows from Definition \ref{FLaplacian}. 
 Since the convergence in $\mathcal{S}(K)$ implies the convergence in $L^2(K,\mu)$, we conclude that $\varphi$ is indeed continuous at $0$.
\end{proof}

The previous result yields for $s > 0$ and $\alpha \in (0,2]$ the existence of a random variable $X_{s,\alpha}$  on $\mathcal{S}'(K)$ such that for every $f \in \mathcal{S}(K)$
\[
\mathbb{E} \left( e^{iX_{s,\alpha} (f)}\right)=\textup{e}^{-  \|(-\Delta )^{-s} f\|_{L^\alpha(K,\mu)}^\alpha}.
\]
On the other hand, since $(-\Delta )^{-s}:\mathcal{S}(K) \to \mathcal{S}(K)$, one can define the operator $(-\Delta )^{-s}:\mathcal{S}'(K) \to \mathcal{S}'(K)$  thanks to the duality formula
\[
((-\Delta )^{-s} X)(f)=X((-\Delta )^{-s} f), \quad X \in \mathcal{S}'(K), f \in \mathcal{S}(K).
\]
In distribution, we have then
\[
(-\Delta )^{-s'} X_{s,\alpha}=X_{s+s',\alpha}.
\]
 Despite the previous nice interpretations as random variables in $\mathcal{S}'(K)$, in this paper we will work with the following \textit{concrete} and more constructive realization of the fractional stable random field as a stochastic integral; we a priori lose the a.s. continuity in the $\mathcal{S}(K)$ topology but this does not play any role in the sequel.

 Similarly to  \cite{BC22-DFGF} we consider the Sobolev space $H^{-s}(K)$ which is defined as the completion of $\mathcal{S}(K)$ with respect  to the norm $\| f \|_{H^{-s}(K)}=\| (-\Delta)^{-s} f \|_{L^2(K,\mu)}$. This Sobolev space does not depend on the choice of the basis since it is also the completion of $L^2_0(K,\mu)$ with respect to that norm.

\begin{definition}\label{def fractional field} 
  Let $s > 0$ and $\alpha \in (0,2]$. The  Neumann fractional $\alpha$-stable field $\left\{ X_{s,\alpha}(f), f\in H^{-s}(K) \right\}$ is defined as
  \begin{equation}\label{repre stable def}
X_{s,\alpha}(f) =W_{K,\alpha} ((-\Delta)^{-s} f)=\int_K (-\Delta)^{-s} f (z) \,W_{K,\alpha}(dz).
  \end{equation}
\end{definition}
Note  that the field $X_{s,\alpha}$ is well-defined since for $f \in H^{-s}(K)$, $(-\Delta)^{-s} f \in L^2 (K,\mu)\subset L^\alpha(K,\mu)$.  

\begin{remark}
It follows from Definition \ref{def fractional field} that for every $f \in H^{-s}(K)$,
\[
\mathbb{E} \left( \textup{e}^{i X_{s,\alpha}(f) }\right)=\textup{e}^{-  \|(-\Delta )^{-s} f\|_{L^\alpha(K,\mu)}^\alpha}.
\]
As in \cite{BC22-DFGF} which deals with the case $\alpha=2$, this property could be used as a definition of a Neumann fractional $\alpha$-stable field. However, it will be more convenient here to work with the concrete representation \eqref{repre stable def}.
\end{remark}

\subsection{Density field}

As in \cite{BL22-FGF,BC22-DFGF}, let us now study the existence of a density with respect to the Hausdorff measure $\mu$ of the Neumann fractional stable field $X_{s,\alpha}$. We only consider the case $\alpha<2$ since the case $\alpha=2$ has already been considered in \cite{BL22-FGF}, even though next result is more precise (see Remark \ref{Comp} below). 

\begin{theorem}\label{Existence-Density}
  Let $\alpha\in (0,2)$. 
  
  \begin{enumerate}
\item   If    $s>\max\left(\frac{(\alpha-1)d_h}{\alpha d_w},0\right)$, then 
for every $ f\in \mathcal{S}(K)$, we have a.s.
  \[
\int_K f(x) \widetilde{X}_{s,\alpha}(x) \mu(dx) =X_{s,\alpha}(f)
  \]
  with $\widetilde{X}_{s,\alpha}=\left(\widetilde{X}_{s,\alpha}(x)\right)_{x\in K}$ the fractional $\alpha$-stable field  well-defined by 
\begin{equation}
\label{dens def}
\widetilde{X}_{s,\alpha}(x)=\int_K  G_s(x,z) W_{K,\alpha}(dz), \ x\in K. 
\end{equation}
\item Moreover, if there exists a  random field $(Y(x))_{x \in K}$ such that a.s
  \[
\int_K |Y(x)| \mu(dx) <+\infty 
  \]
 and such that for every $ f\in \mathcal{S}(K)$ we have a.s.
 \[
\int_K f(x) Y(x) \mu(dx) =X_{s,\alpha}(f),
  \]
  then  we have $s>\max\left(\frac{(\alpha-1)d_h}{\alpha d_w},0\right)$ and for a.e. $x\in K$  with probability one $Y (x)=\widetilde{X}_{s,\alpha}(x)+\int_K Y(z) \mu(dz)$.
\end{enumerate}
\end{theorem}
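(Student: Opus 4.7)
My proof plan follows the two-part structure of the statement.

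For Part 1, given $s > \max\left(\frac{(\alpha-1)d_h}{\alpha d_w},0\right)$, Lemma \ref{Integrabilite-G} yields the uniform bound $\sup_{x\in K} \int_K |G_s(x,y)|^\alpha \mu(dy) <+\infty$, so the stable integral in \eqref{dens def} defines an $\alpha$-stable random variable $\widetilde{X}_{s,\alpha}(x)$ for every $x\in K$. Given $f\in\mathcal{S}(K)$, combining Definition \ref{def fractional field} with Lemma \ref{riesz-kernel laplace} gives
\[
X_{s,\alpha}(f)=\int_K \left(\int_K G_s(z,y) f(y)\, \mu(dy)\right) W_{K,\alpha}(dz).
\]
The core step is to apply a stochastic Fubini theorem for symmetric $\alpha$-stable integrals (as in \cite{taqqu}) to interchange the two integrals, whose integrability hypothesis reduces to
\[
\int_K |f(y)|\left(\int_K |G_s(y,z)|^\alpha \mu(dz)\right)^{1/\alpha} \mu(dy) <+\infty,
\]
and holds by boundedness of $f\in \mathcal{S}(K)\subset C(K)$ together with the uniform $L^\alpha$-bound above. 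Symmetry of $G_s$ then yields $X_{s,\alpha}(f)=\int_K f(y) \widetilde{X}_{s,\alpha}(y)\mu(dy)$ almost surely.

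For the necessity of the range in Part 2, if $s\le \max\left(\frac{(\alpha-1)d_h}{\alpha d_w},0\right)$, Lemma \ref{Integrabilite-G} asserts that $\|G_s(x,\cdot)\|_{L^\alpha(K,\mu)}=+\infty$ for every $x$. Testing the identity $\int_K f Y\, d\mu = X_{s,\alpha}(f)$ against an approximate delta-sequence $f_n\in\mathcal{S}(K)$ concentrating at a fixed point, the scale parameter $\|(-\Delta)^{-s}f_n\|_{L^\alpha}^\alpha$ of the right-hand side blows up (using Lemma \ref{riesz-kernel laplace} and the non-integrability of $G_s(x,\cdot)^\alpha$), while the left-hand side stays tight in probability since $\sup_n\|f_n\|_{L^\infty}<\infty$ controls it through $\int|Y|d\mu$. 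This contradiction forces $s$ into the admissible range.

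In that range, Part 1 provides $\widetilde{X}_{s,\alpha}$ and $Z(x):=Y(x)-\widetilde{X}_{s,\alpha}(x)$ satisfies $\int_K f Z\, d\mu=0$ a.s. for every $f\in\mathcal{S}(K)$; by separability, a single null event suffices. A heat-semigroup argument shows $\mathcal{S}(K)$ is dense in $C_0(K)$ for the uniform norm: for $g\in C_0(K)$ and $t>0$, the exponential decay of eigencoefficients gives $P_tg\in\mathcal{S}(K)$, and $P_tg\to g$ uniformly as $t\to 0^+$. Since $Z\in L^1(K,\mu)$ a.s., dominated convergence extends the vanishing to all $g\in C_0(K)$, forcing $Z$ to be $\mu$-a.e. constant, equal to $\int_K Z\,d\mu$. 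A second application of stochastic Fubini together with $\int_K G_s(x,z)\mu(dx)=0$ (which follows from the spectral expansion of $G_s$ via mean-zero eigenfunctions) yields $\int_K\widetilde{X}_{s,\alpha}(x)\mu(dx)=0$ a.s., so the constant is $\int_K Y(z)\mu(dz)$.

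The main obstacle will be verifying the hypotheses of the stochastic Fubini theorem cleanly in the $\alpha$-stable setting—particularly the joint measurability of $(x,\omega)\mapsto \widetilde{X}_{s,\alpha}(x)$ and the precise integrability condition near the boundary of the admissible range of $s$—and rigorously carrying out the quantitative blow-up argument used to establish the necessity of the range of $s$ in Part 2.
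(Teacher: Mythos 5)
Your Part~1 and the identification step of Part~2 follow essentially the paper's route: the stochastic Fubini theorem (Theorem 11.4.1 of Samorodnitsky--Taqqu) applied with the kernel $f(y)G_s(y,z)$ under the condition $\int_K |f(y)|\bigl(\int_K|G_s(y,z)|^\alpha\mu(dz)\bigr)^{1/\alpha}\mu(dy)<\infty$, symmetry of $G_s$, and then an approximate-identity/density argument to upgrade $\int_K fZ\,d\mu=0$ for all $f\in\mathcal S(K)$ to $Z=\int_K Z\,d\mu$ $\mu$-a.e.; your use of density of $\mathcal S(K)$ in $C_0(K)$ is a harmless variant of the paper's direct use of $f=p_t(x,\cdot)-1$, and the vanishing of $\int_K\widetilde X_{s,\alpha}\,d\mu$ is obtained exactly as in the paper's Lemma~\ref{Integrabilite-Dens}.

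The genuine gap is in your proof of the necessity of $s>\max\bigl(\frac{(\alpha-1)d_h}{\alpha d_w},0\bigr)$. You ask for a concentrating sequence $f_n\in\mathcal S(K)$ with $\|(-\Delta)^{-s}f_n\|_{L^\alpha}\to\infty$ while claiming tightness of $\int_K f_nY\,d\mu$ from $\sup_n\|f_n\|_{L^\infty}<\infty$. These two requirements exclude each other: since $\mu$ is a probability measure and $\alpha\le 2$,
\[
\|(-\Delta)^{-s}f_n\|_{L^\alpha(K,\mu)}\le\|(-\Delta)^{-s}f_n\|_{L^2(K,\mu)}\le\lambda_1^{-s}\|f_n\|_{L^2(K,\mu)}\le\lambda_1^{-s}\|f_n\|_{L^\infty(K,\mu)},
\]
so a uniformly bounded sequence can never make the scale parameter blow up, and conversely a delta-approximating sequence in $\mathcal S(K)$ necessarily has $\|f_n\|_{L^\infty}\to\infty$, after which the bound $|\int_K f_nY\,d\mu|\le\|f_n\|_\infty\int_K|Y|\,d\mu$ is useless. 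The paper resolves this by taking $f=p_t(y,\cdot)-1$ (whose sup norm does blow up) and deriving tightness of $\int_K p_t(y,x)Y(x)\mu(dx)$ not from a sup-norm bound but from the fact that, almost surely, $y\mapsto\int_K p_t(y,x)Y(x)\mu(dx)$ converges to $Y$ in $L^1(K,\mu)$ as $t\to0$ (because $\int_K|Y|\,d\mu<\infty$ a.s.), so that for $\mu$-a.e.\ $y$ the characteristic functions converge to that of the finite random variable $Y(y)$ and cannot tend to $0$. Note also that the blow-up of $\|(-\Delta)^{-s}(p_t(y,\cdot)-1)\|_{L^\alpha}$ itself is not an immediate consequence of the non-integrability of $G_s(x,\cdot)^\alpha$: it requires the quantitative lower bound $\int_K G_s(x,z)p_t(y,z)\mu(dz)\ge c\,(d(x,y)+t^{1/d_w})^{sd_w-d_h}-c'$ obtained from the sub-Gaussian heat-kernel lower bound together with Proposition~\ref{Control-G}. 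You would need to rebuild this part of the argument along those lines.
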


\begin{remark}\label{Comp}\hfill

\begin{enumerate}
\item When $\alpha=2$, the random field $\widetilde{X}_{s,\alpha}$ is still  well-defined by \eqref{dens def} when $s>\frac{d_h}{2d_w}$ and,  up to a multiplicative constant, is the fractional Brownian motion of index $H=sd_\omega-\frac{d_h}{2}$ introduced in \cite{BL22-FGF}. Moreover according  {to} Theorem~1.1 in \cite{BL22-FGF}, it is also the density field of $X_{s,2}$ when $s<1-\frac{d_h}{2d_w}$, and this can still be proved for any $s>\frac{d_h}{2d_w}$ as done in  \cite{BC22-DFGF} for the Dirichlet Laplacian.   
\item In the Euclidean framework, that is when we considered the Laplacian on $\mathbb R^d$,  the heat kernel  is given by 
$$
p_t(x,y)=\frac{1}{(4\pi t)^{d/2}}\exp \left(-\frac{\left\|x-y\right\|^2}{4t}\right),\quad t>0, \,x\in \mathbb R^d, \,y\in \mathbb R^d.
$$
In this case, due to the behavior of the heat kernel $p_t$,  the Riesz kernel $G_s$ cannot be defined by Equation~\eqref{green function} for any real $s$. Following e.g. \cite{Gelbaum14} which deals with fractional Brownian fields (so $\alpha=2$) indexed by manifolds, we can define it by 
$$
G_s(x,y)=\frac{1}{\Gamma(s)}\int_{0}^{+\infty} t^{s-1}\left(p_t(x,y)-p_t(0,y)\right)\,dt , \quad x\ne y,
$$
as soon as $s\in (-\infty,d/2+1)$. In addition, for $x\ne y$ and $y\ne 0$, 
$$
G_s(x,y)=\left\{\begin{array}{ll}
b_s\left(\|x-y\|^{2s-d}-\|y\|^{2s-d}\right)&\textrm{ when $s\ne \frac{d}{2}$}\\[10pt]
b_s\left(\log \|y\| -\log \|x-y\|\right) &\textrm{ when $s= \frac{d}{2}$}
\end{array}
\right.
$$
with 
$$
b_s=\left\{\begin{array}{ll}\displaystyle \frac{\Gamma\left(\frac{d}{2}+1-s\right)}{4^s\pi^{\frac{d}{2}}\left(\frac{d}{2}-s\right)\Gamma(s)}&\textrm{ when $s\ne \frac{d}{2}$}\\[15pt]
\displaystyle\frac{2}{\left(4\pi\right)^{\frac{d}{2}}\Gamma(s)}&\textrm{ when $s= \frac{d}{2}$}.
\end{array}
\right.
$$
 Then replacing in Equation \eqref{dens def}, $K$ by $\mathbb R^d$ and $W_{K,\alpha}$ by $W_\alpha$ a real symmetric $\alpha$-stable random measure with control measure the Lebesgue measure $\mu$,     the random field $\widetilde{X}_{s,\alpha}$ is well-defined when $s\in \left(\frac{d}{2}-\frac{d}{2\alpha},\frac{d}{2}-\frac{d}{2\alpha}+\frac{1}{2}\right)$. Note that in this case,  $\widetilde{X}_{s,\alpha}(0)=0$. Moreover, when $\alpha=2$, the random field $\widetilde{X}_{s,\alpha}$ is the fractional Brownian field of Hurst index $H=2s-d/2$; when $\alpha<2$, it is 
a fractional moving average stable  field of index $H=2s-d+\frac{d}{\alpha}\ne \frac{d}{\alpha}$ when $s\ne d/2$ and a log-fractional stable field when $s=d/2$, see e.g. \cite{taqqu}. 
\end{enumerate}
\end{remark}

Before turning to the proof we start with a preliminary lemma.

\begin{lemma}\label{Integrabilite-Dens}
    Let $\alpha\in (0,2)$ and  $s>\max\left(\frac{(\alpha-1)d_h}{\alpha d_w},0\right)$. Then $\widetilde{X}_{s,\alpha}$ is well-defined by \eqref{dens def} and a.s.
 \[
\int_K \left|\widetilde{X}_{s,\alpha}(x)\right| \mu(dx) <+\infty 
  \]  
  and
 \[
\int_K \widetilde{X}_{s,\alpha}(x) \mu(dx)=0. 
  \]  
\end{lemma}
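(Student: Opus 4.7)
The plan is to derive the lemma from the stochastic Fubini theorem for symmetric $\alpha$-stable random measures, in the form given in Chapter 11 of \cite{taqqu}. First I verify that $\widetilde{X}_{s,\alpha}(x)$ is well-defined for every $x \in K$: under the standing hypothesis $s > \max\left(\frac{(\alpha-1)d_h}{\alpha d_w}, 0\right)$, Lemma~\ref{Integrabilite-G} applied with $p=\alpha$ gives $G_s(x,\cdot) \in L^\alpha(K,\mu)$ for every $x$, together with the uniform bound $M := \sup_{x\in K} \|G_s(x,\cdot)\|_{L^\alpha(K,\mu)} < +\infty$. In particular $\widetilde{X}_{s,\alpha}(x)$ is a symmetric $\alpha$-stable random variable with scale parameter at most $M$, and a jointly measurable version of $(x,\omega) \mapsto \widetilde{X}_{s,\alpha}(x)$ can be obtained by the standard approximation of $G_s(x,\cdot)$ by simple functions.

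Since $\mu$ is a probability measure, the uniform bound above yields
\[
\int_K \|G_s(x,\cdot)\|_{L^\alpha(K,\mu)} \mu(dx) \le M < +\infty,
\]
which is precisely the integrability hypothesis required by the stochastic Fubini theorem for $\alpha$-stable integrals. Applying that theorem delivers at once the almost sure finiteness $\int_K |\widetilde{X}_{s,\alpha}(x)| \mu(dx) < +\infty$ together with the identity
\[
\int_K \widetilde{X}_{s,\alpha}(x) \mu(dx) = \int_K \left( \int_K G_s(x,z) \mu(dx) \right) W_{K,\alpha}(dz) \qquad \text{a.s.}
\]

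It then remains to show that $\int_K G_s(x,z) \mu(dx) = 0$ for every $z \in K$. Using \eqref{green function}, the upper heat-kernel estimate \eqref{eq:subGauss-upper} for $t \le 1$, and the exponential decay $|p_t(x,z)-1| \le C e^{-\lambda_1 t}$ for $t \ge 1$ recalled in Lemma 2.3 of \cite{BL22-FGF}, deterministic Fubini applies and gives
\[
\int_K G_s(x,z) \mu(dx) = \frac{1}{\Gamma(s)} \int_0^{+\infty} t^{s-1} \left( \int_K p_t(x,z) \mu(dx) - 1 \right) dt.
\]
Since the heat semigroup is Markov and $\mu$ is invariant, $\int_K p_t(x,z) \mu(dx) = P_t \mathbf{1}(z) = 1$ for every $z \in K$ and $t > 0$, so the inner bracket vanishes and the conclusion $\int_K \widetilde{X}_{s,\alpha}(x)\mu(dx) = 0$ follows.

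The main obstacle will be the careful invocation of the stochastic Fubini theorem across the full range $\alpha \in (0,2)$, together with the joint measurability of $\widetilde{X}_{s,\alpha}$: for $\alpha \leq 1$ the variables $\widetilde{X}_{s,\alpha}(x)$ have no finite mean, so the finiteness of $\int_K |\widetilde{X}_{s,\alpha}(x)|\mu(dx)$ cannot be obtained by a naive Fubini--Tonelli argument and genuinely requires the pathwise statement from \cite{taqqu}. Once this technical input is in place, the rest of the argument is a deterministic consequence of Lemma~\ref{Integrabilite-G} and of the invariance of $\mu$ under the heat semigroup.
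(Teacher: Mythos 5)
Your proof follows the same overall route as the paper's (well-definedness from Lemma~\ref{Integrabilite-G}, almost sure integrability of the sample paths from Chapter 11 of \cite{taqqu}, then the stochastic Fubini theorem combined with $\int_K G_s(x,z)\,\mu(dx)=0$, which you justify correctly via stochastic completeness). There is, however, a genuine gap in the step you yourself identify as the crux. You claim that $\int_K \|G_s(x,\cdot)\|_{L^\alpha(K,\mu)}\,\mu(dx)<+\infty$ is ``precisely the integrability hypothesis required'' for the a.s.\ finiteness of $\int_K|\widetilde{X}_{s,\alpha}(x)|\,\mu(dx)$ and for the Fubini interchange. That is correct only when $\alpha>1$: there one is in the regime $p=1<\alpha$ of Theorem~11.3.2 of \cite{taqqu}, whose condition is $\int_K\left(\int_K|G_s(x,z)|^\alpha\mu(dz)\right)^{1/\alpha}\mu(dx)<\infty$. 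When $\alpha\le 1$ one is in the regime $p\ge\alpha$ of that theorem, where the necessary and sufficient condition has the two integrations in the \emph{opposite} order, namely $\int_K\left(\int_K|G_s(x,z)|^{p}\mu(dx)\right)^{\alpha/p}\mu(dz)<\infty$ when working with an exponent $p>\alpha$ (and carries a logarithmic correction at $p=\alpha$). Your condition is neither this one nor known to imply it, so for $\alpha\in(0,1]$ the a.s.\ finiteness of $\int_K|\widetilde{X}_{s,\alpha}(x)|\,\mu(dx)$ --- and hence the applicability of the stochastic Fubini theorem --- is not established. Flagging that ``a naive Fubini--Tonelli argument'' fails for $\alpha\le1$ does not substitute for verifying the hypothesis that the pathwise theorem actually requires.

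The repair is short and is exactly what the paper does: for $\alpha\le 1$ pick $\varepsilon>0$ so small that $s>\frac{\varepsilon d_h}{(1+\varepsilon)d_w}$, apply Lemma~\ref{Integrabilite-G} with $p=1+\varepsilon$ and use the symmetry of $G_s$ to obtain $\sup_{z\in K}\int_K|G_s(x,z)|^{1+\varepsilon}\mu(dx)<+\infty$; since $1+\varepsilon>\alpha$, Theorem~11.3.2 of \cite{taqqu} (case $p>\alpha$) yields $\int_K|\widetilde{X}_{s,\alpha}(x)|^{1+\varepsilon}\mu(dx)<+\infty$ a.s., and H\"older's inequality on the probability space $(K,\mu)$ turns this into $\int_K|\widetilde{X}_{s,\alpha}(x)|\,\mu(dx)<+\infty$ a.s. With that in place, the remainder of your argument (the stochastic Fubini identity and the vanishing of $\int_K G_s(x,z)\,\mu(dx)$) goes through as written.
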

\begin{proof} Let $s>\max\left(\frac{(\alpha-1)d_h}{\alpha d_w},0\right)$. 
Then by Lemma \ref{Integrabilite-G}, the Riesz kernel $z\mapsto G_s(x,z)$ is in $L^{\alpha}(K,\mu)$ and so $\widetilde{X}_{s,\alpha}$ is well-defined by \eqref{dens def}. 

Let us now assume that  $\alpha >1$.   By Lemma \ref{Integrabilite-G} applied with $p=\alpha$, 
$$
C_1=\sup_{x\in K} \int_K\left|G_s(x,y)\right|^\alpha \mu(dy) <+\infty
$$
and then
$$
\int_{K}\left(\int_K\left|G_s(x,y)\right|^\alpha \mu(dy)\right)^{1/\alpha}\!\!\mu(dx)<C_1^{1/\alpha}\mu(K)=C_1^{1/\alpha}<+\infty. 
$$
Hence, by Theorem 11.3.2 in \cite{taqqu}, since $\mu$ is a $\sigma$-finite measure on $K$ and since $\alpha>1$, a.s. 
$$
\int_K \left|\widetilde{X}_{s,\alpha}(x)\right| \mu(dx) <+\infty .
$$

Let us now assume that $\alpha \in (0,1]$. Then $\max\left(\frac{(\alpha-1)d_h}{\alpha d_w},0\right)=0$ and so $s>0$. We can choose $\varepsilon>0$ (depending on $s$) small enough such that $s>\frac{\varepsilon d_h}{(1+\varepsilon)d_w }>0$.  Then applying Lemma \ref{Integrabilite-G} with $p=1+\varepsilon$,
$$
C_2=\sup_{x\in K} \int_K\left|G_s(x,y)\right|^{1+\varepsilon}\mu(dy) <+\infty. 
$$
Moreover by symmetry of $G_s$,
$$
C_2=\sup_{y\in K} \int_K\left|G_s(x,y)\right|^{1+\varepsilon} \mu(dx)
$$
and then
$$
\int_{K}\left(\int_K\left|G_s(x,y)\right|^{1+\varepsilon} \mu(dx)\right)^{\alpha/(1+\varepsilon)}\mu(dy)\le C_2^{\alpha/(1+\varepsilon)}\mu(K)=C_2^{\alpha/(1+\varepsilon)}<+\infty.
$$
 Hence by Theorem 11.3.2 in \cite{taqqu}, since $\mu$ is a $\sigma$-finite measure on $K$ and since $1+\varepsilon >\alpha$, a.s. 
$$
\int_K \left|\widetilde{X}_{s,\alpha}(x)\right|^{1+\varepsilon} \mu(dx) <+\infty .
$$
This implies that a.s. 
$$
\int_K \left|\widetilde{X}_{s,\alpha}(x)\right|\mu(dx) <+\infty
$$
since $\mu$ is a probability.  Then, applying Theorem 11.4.1 in \cite{taqqu},  that is  a stochastic Fubini theorem for stable random fields, it follows that a.s.  
$$
\int_K \widetilde{X}_{s,\alpha}(x)\mu(dx)= \int_{K}\left(\int_K G_s(x,z)\mu(dx)\right)W_{K,\alpha}(dz)=0
$$
since 
for every $z \in K$
\[
\int_K G_s(x,z)\mu(dx)=0.
\]
\end{proof}

\begin{proof}[{Proof of Theorem \ref{Existence-Density}}]
Let $f\in \mathcal{S}(K)$. Then $f$ is continuous on $K$ and  bounded on the compact $K$. Hence 
$$
\int_K \left| f(x) \widetilde{X}_{s,\alpha}(x) \right|\mu (dx) \le C \int_K \left|  \widetilde{X}_{s,\alpha}(x) \right|\mu (dx) <+\infty \quad \textrm{a.s.}
$$
by Lemma \ref{Integrabilite-Dens}. Therefore applying Theorem 11.4.1 in \cite{taqqu}, that is a stochastic Fubini theorem, and Lemma~\ref{riesz-kernel laplace}, almost surely,
\begin{align*}
    \int_K f(x) \widetilde{X}_{s,\alpha}(x) \mu(dx)
   =&\int_K f(x) \left( \int_K G_s(x,y)W_{K,\alpha}(dy)\right) \mu(dx) \\
   =&\int_K \left( \int_K G_s(x,y) f(x) \mu(dx) \right)W_{K,\alpha}(dy)\\
   =&\int_K (-\Delta)^{-s} f(y) W_{K,\alpha}(dy).
 \end{align*}

Next, assume that there exists a random field $(Y(x))_{x \in K}$ such that a.s
  \[
\int_K |Y(x)| \mu(dx) <+\infty 
  \]
 and such that for every $ f\in \mathcal{S}(K)$ we have a.s.
 \[
\int_K f(x) Y(x) \mu(dx) =X_{s,\alpha}(f). 
  \]
  We can assume $\int_K  Y(x) \mu(dx)=0$, otherwise we could consider $Y(x)-\int_K  Y(x) \mu(dx)$.
  Then, we have
  \[
\mathbb{E}\left(e^{iu\int_K f(x) Y(x) \mu(dx)} \right)=\textup{e}^{- \left|u\right|^\alpha \|(-\Delta)^{-s}f\|_{L^\alpha(K,\mu)}^\alpha}.
  \]
  We use this equality with $f=p_t(y,\cdot)-1 \in\mathcal{S}(K)$. This yields
  \begin{align}\label{contra gb}
\mathbb{E}\left(e^{iu\int_K p_t(y,x) Y(x) \mu(dx)} \right)=\textup{e}^{- \left|u\right|^\alpha \|(-\Delta)^{-s}(p_t (y,\cdot)-1)\|_{L^\alpha(K,\mu)}^\alpha}
  \end{align}
  If $s\le \max\left(\frac{(\alpha-1)d_h}{\alpha d_w},0\right)$, then $\|(-\Delta)^{-s}(p_t (y,\cdot)-1)\|_{L^\alpha(K,\mu)} \to +\infty$ when $t \to 0$. Indeed, applying Lemma~\ref{riesz-kernel laplace}, since $s>0$, we first note that
  \[
  (-\Delta)^{-s}(p_t (y,\cdot)-1)(x)=\int_K G_s(x,z)(p_t(y,z)-1) \mu(dz)=\int_K G_s(x,z)p_t(y,z) \mu(dz). 
  \]
  Then, we have the estimates for $x,y\in K$, $x\ne y$,
$$
 G_s(x,y) \ge C_1 d(x,y)^{sd_w-d_h} -C_2,
$$
by Proposition \ref{Control-G}, 
and by \eqref{eq:subGauss-upper},
$$
p_t(y,z) \ge c_{1}t^{-d_{h}/d_{w}}\exp\biggl(-c_{2}\Bigl(\frac{d(y,z)^{d_{w}}}{t}\Bigr)^{\frac{1}{d_{w}-1}}\biggr) 
$$ 
which imply 
\begin{align*}
    \int_K G_s(x,z)p_t(y,z) \mu(dz) & \ge c_3 t^{-d_{h}/d_{w}} \int_K \exp\biggl(-c_{2}\Bigl(\frac{d(y,z)^{d_{w}}}{t}\Bigr)^{\frac{1}{d_{w}-1}}\biggr) d(x,z)^{sd_w-d_h} \mu (dz)-c_4 \\
     &\ge c_5 t^{-d_{h}/d_{w}} \int_{B(y,t^{1/d_w})} d(x,z)^{sd_w -d_h}\mu(dz)-c_4 \\
     &\ge  c_6  (d(x,y)+t^{1/d_w})^{sd_w -d_h}-c_4,
\end{align*}
where the last inequality follows from the Ahlfors property \eqref{Ahlfors} and the triangle inequality since $sd_w-d_h<0$. 
We then see that for $s\le \max\left(\frac{(\alpha-1)d_h}{\alpha d_w},0\right)$
\[
\lim_{t \to 0}\int_K (d(x,y)+t^{1/d_w})^{\alpha(sd_w -d_h)} \mu(dx)=+\infty.
\]
Therefore,  if $s\le \max\left(\frac{(\alpha-1)d_h}{\alpha d_w},0\right)$, then $\|(-\Delta)^{-s}(p_t (y,\cdot)-1)\|_{L^\alpha(K,\mu)}\to +\infty$ when $t \to 0$. From \eqref{contra gb} we deduce that for $s\le \max\left(\frac{(\alpha-1)d_h}{\alpha d_w},0\right)$ we have for every $y \in K$ and $u\in\mathbb{R}\backslash\{0\},$
 \[
\lim_{t \to 0} \mathbb{E}\left(e^{iu\int_K p_t(y,x) Y(x) \mu(dx)} \right)=0.
 \]
 This is not possible because  a.s, when $t \to 0$,  $y \to \int_K p_t(y,x) Y(x) \mu(dx)$ converges in $L^1(K,\mu)$ to $Y$, see \cite[Page 114]{BauBook} or \cite[Theorem 1.4.1.]{Davies}, since we have a.s.
  \[
\int_K |Y(x)| \mu(dx) <+\infty .
  \]
  We conclude that  $s > \max\left(\frac{(\alpha-1)d_h}{\alpha d_w},0\right)$. In that case, from the first part of the proof we deduce then that
for every $ f\in \mathcal{S}(K)$, we have a.s.
  \[
\int_K f(x) Y(x) \mu(dx)=X_{s,\alpha}(f)=\int_K f(x) \widetilde{X}_{s,\alpha}(x) \mu(dx). 
  \]
  This implies that  we have a.s. for a.e. $x \in K$
  \[
  Y(x)=\widetilde{X}_{s,\alpha}(x) 
  \]
  since as before we can use $f=p_t(x,\cdot)-1$ and let $t \to 0.$
\end{proof}

When $\alpha <2$, the study of the sample path properties of the pointwise Neumann fractional $\alpha$-stable random field $
\widetilde{X}_{s,\alpha}$ is based on its representation as a conditionally Gaussian LePage series. It follows  from the series representation  \eqref{LePageSeries} of any stochastic integral $W_{K,\alpha}(f)$ and Equation \eqref{SeriesVA}. 
\begin{proposition} \label{FSM_SeriesRepresentation} 
Let $\alpha \in (0,2)$ and  $s>\max\left(\frac{(\alpha-1)d_h}{\alpha d_w},0\right)$. Assume that Assumption \ref{AssumptionLePage} is fulfilled and let us consider the Neumann fractional  $\alpha$-stable field $\widetilde{X}_{s,\alpha}$ defined by \eqref{dens def}. 
Then for any $x\in K$, the series 
\begin{equation}
\label{Field-Y-Series}
Y_{s,\alpha}(x)=S\left(G_s\left(x,\cdot\right)\right)=D_\alpha \sum_{n=1}^{+\infty} T_n^{-{1/\alpha}} G_s\left(x,\xi_n\right)g_n,
\end{equation}
with $D_\alpha$ given by \eqref{csteCalpha}, converges almost surely and
$$
\left(\widetilde{X}_{s,\alpha}(x)\right)_{x\in K}\stackrel{(fdd)}{=} \left(Y_{s,\alpha}(x)\right)_{x\in K}.
$$
\end{proposition}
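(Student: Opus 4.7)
The plan is to derive this directly from the LePage representation \eqref{LePageSeries}--\eqref{SeriesVA} applied to the functions $f=G_s(x,\cdot)$ and to their finite linear combinations, combined with the integrability of the Riesz kernel given by Lemma \ref{Integrabilite-G}. The assumption $s>\max\!\left(\frac{(\alpha-1)d_h}{\alpha d_w},0\right)$ is tailored so that $G_s(x,\cdot)\in L^\alpha(K,\mu)$ for every $x\in K$, which is exactly what is needed to invoke the LePage series.

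First, fix $x\in K$. By Lemma \ref{Integrabilite-G}, $G_s(x,\cdot)\in L^\alpha(K,\mu)$. Applying \eqref{LePageSeries} with $f=G_s(x,\cdot)$ gives almost sure convergence of
\[
S(G_s(x,\cdot))=D_\alpha\sum_{n=1}^{+\infty} T_n^{-1/\alpha} G_s(x,\xi_n)g_n,
\]
which is precisely $Y_{s,\alpha}(x)$, and moreover $Y_{s,\alpha}(x)\stackrel{(d)}{=} W_{K,\alpha}(G_s(x,\cdot))=\widetilde{X}_{s,\alpha}(x)$.

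For the equality of finite-dimensional distributions, I would use a Cram\'er-Wold argument. Fix $x_1,\ldots,x_m\in K$ and $\lambda_1,\ldots,\lambda_m\in\mathbb{R}$, and set $f=\sum_{i=1}^m\lambda_i G_s(x_i,\cdot)$. Since $L^\alpha(K,\mu)$ is a vector space (with the quasi-norm $g\mapsto \|g\|_{L^\alpha}^\alpha$ when $\alpha<1$), $f\in L^\alpha(K,\mu)$. On the one hand, linearity of the stochastic integral yields
\[
\sum_{i=1}^m \lambda_i \widetilde{X}_{s,\alpha}(x_i)=W_{K,\alpha}(f).
\]
On the other hand, since each series defining $Y_{s,\alpha}(x_i)$ converges almost surely, one may interchange the finite sum over $i$ with the infinite sum over $n$ on a common event of full probability to obtain
\[
\sum_{i=1}^m \lambda_i Y_{s,\alpha}(x_i)=D_\alpha\sum_{n=1}^{+\infty} T_n^{-1/\alpha} f(\xi_n) g_n=S(f).
\]
Then \eqref{SeriesVA} gives $S(f)\stackrel{(d)}{=} W_{K,\alpha}(f)$, and Cram\'er-Wold concludes.

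There is no real obstacle here: the entire argument is a bookkeeping exercise built on Lemma \ref{Integrabilite-G} and the standard LePage series representation. The only subtle point is ensuring that the vector-space structure of $L^\alpha$ is used correctly in the sub-Banach regime $\alpha<1$, and that the a.s. interchange of a finite sum with a series of a.s. convergent terms is harmless; both are routine.
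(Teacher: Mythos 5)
Your proposal is correct and follows essentially the same route as the paper, which justifies the proposition in one line by invoking the LePage representation \eqref{LePageSeries}--\eqref{SeriesVA} together with the integrability of $G_s(x,\cdot)$ from Lemma \ref{Integrabilite-G}. Your explicit Cram\'er--Wold step, applying \eqref{SeriesVA} to the linear combination $f=\sum_i\lambda_i G_s(x_i,\cdot)$ and interchanging the finite sum with the a.s.\ convergent series, is exactly the bookkeeping the paper leaves implicit.
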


Then conditionally to $(T_n,\xi_n)_{n\ge 1}$, $Y_{s,\alpha}$ is a centered real-valued Gaussian random field and the  increment   $Y_{s,\alpha}(x)-Y_{s,\alpha}(y)$ is a centered real-valued Gaussian variable with variance 
\begin{equation}
\label{var totale}
s_{\alpha}(x,y)^2=D_\alpha^2\,\mathbb{E}\left(g_1^2\right)\sum_{n=1}^{+\infty} T_n^{-2/\alpha} 
\left|G_s(x,\xi_n)-G_s(y,\xi_n)\right|^2. 
\end{equation}
\subsection{Sample paths regularity }

In this section, we are now interested in the  sample path smoothness of the Neumann fractional $\alpha$-stable  random field $X_{s,\alpha}$ on $K$.  This extend the sample path study done in the Gaussian framework in \cite{BL22-FGF}, considering the whole range of parameter $s$. The proof is not based on the control of the variance of the increments, since the stable field $X_{s,\alpha}$ is not square integrable but as in several studies about stable random fields  (see e.g. \cite{KM91,BL12,BLS11-MOSRF,BL-2015-Bernoulli}), one main ingredient is  the LePage series representation stated in Proposition \ref{FSM_SeriesRepresentation}.
\begin{theorem}
\label{SPR}
 Let $\alpha\in (0,2)$, $s>\max\left(\frac{(\alpha-1)d_h}{\alpha d_w},0\right)$ and let  $\widetilde{X}_{s,\alpha}$ be the Neumann fractional $\alpha$-stable field. Let us set $\eta_s=\min(s,1)d_w-d_h$ and 
$$
\beta_s=\left\{\begin{array}{ll}
0&\textrm{{if} $s< 1$}\\
1&\textrm{{if} $s\ge 1$.}
\end{array}
\right. $$

 \begin{enumerate}
\item If  $s>\frac{d_h}{d_w}$, 
 then there exists a modification $\widetilde{X}_{s,\alpha}^*$ of $\widetilde{X}_{s,\alpha}$ such that  
	\begin{equation}
	\label{ModContX}
	\lim_{\delta \to 0}\underset{\underset{x,y\in K}{\scriptstyle 0<d(x,y)}\le \delta}{\sup}\  \frac{\left| \widetilde{X}_{s,\alpha}^*(x)-\widetilde{X}_{s,\alpha}^*(y)\right|}{d(x,y)^{\eta_s} |\ln d(x,y)|^{\beta_s+\frac{1}{2}}} <\infty.	
	\end{equation} 
 \item If $s\le \frac{d_h}{d_w}$, a.s., the sample paths of $\widetilde{X}_{s,\alpha}$ are unbounded on the compact $K$. 
 \end{enumerate}
\end{theorem}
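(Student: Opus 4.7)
The plan is to combine the LePage series representation of Proposition \ref{FSM_SeriesRepresentation} with the pointwise H\"older estimates on the Riesz kernel from Theorem \ref{Holder-Riesz} and a version of the Garsia--Rumsey--Rodemich (GRR) inequality on the fractal for part 1, and to exploit the lower bound of Proposition \ref{Control-G} together with Rosi\'nski's criterion for boundedness of stable integral processes for part 2.

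For part 1, I first condition on the sequence $(T_n,\xi_n)_{n\ge 1}$, making $Y_{s,\alpha}$ a centered Gaussian field whose increments have conditional variance $s_\alpha(x,y)^2$ given by \eqref{var totale}. By Theorem \ref{Holder-Riesz}, for every $\xi\in K$ and $d(x,y)\le 1$,
$$
|G_s(x,\xi)-G_s(y,\xi)|^2 \le C\, d(x,y)^{2\eta_s}\max(|\ln d(x,y)|,1)^{2\beta_s},
$$
so, setting $R:=D_\alpha^2\,\mathbb E(g_1^2)\sum_n T_n^{-2/\alpha}$ (which is a.s.\ finite since $\alpha<2$ implies $\sum_n n^{-2/\alpha}<\infty$ and $T_n\sim n$), we obtain the conditional bound
$$
s_\alpha(x,y)^2 \le C\, R\, d(x,y)^{2\eta_s}\max(|\ln d(x,y)|,1)^{2\beta_s}.
$$
I then apply the GRR inequality in the fractal form of \cite{BP88} (or, more generally, \cite{AS96}) to the conditionally Gaussian field $Y_{s,\alpha}$, using the Young function $\Psi(u)=\exp(u^2/2)-1$ and the control function $p(r)=r^{\eta_s}(\max(|\ln r|,1))^{\beta_s+\frac{1}{2}}$. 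The Ahlfors regularity \eqref{Ahlfors} of $\mu$ makes the entropy integrals in GRR converge and yields, on the full event $\{R<\infty\}$, the modulus of continuity \eqref{ModContX} with a random constant, the extra $1/2$ in the log exponent coming from the Gaussian tail.

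For part 2, the lower bound of Proposition \ref{Control-G} shows that for $s\le d_h/d_w$, the kernel $G_s(x,\cdot)$ is unbounded near $x$ (polynomially if $s<d_h/d_w$, logarithmically if $s=d_h/d_w$), uniformly in $x$. By Rosi\'nski's theorem (\cite{Rosinski89}), the symmetric $\alpha$-stable integral process $\widetilde X_{s,\alpha}(x)=\int_K G_s(x,z)\,W_{K,\alpha}(dz)$ admits a version with bounded sample paths if and only if the family $\{G_s(x,\cdot)\}_{x\in K}$ admits a jointly measurable $L^\alpha$-envelope. Since $\mathrm{ess\,sup}_{x\in K}|G_s(x,y)|=+\infty$ for every $y\in K$, no such envelope exists, so $\widetilde X_{s,\alpha}$ has a.s.\ unbounded sample paths; equivalently, using the LePage representation \eqref{Field-Y-Series}, one may argue directly that for a fixed $\xi_{n_0}$ and $g_{n_0}\neq 0$, taking $x\to \xi_{n_0}$ along $K$ makes $T_{n_0}^{-1/\alpha}G_s(x,\xi_{n_0})g_{n_0}$ blow up, and an independence/Borel--Cantelli argument shows this divergence is not absorbed by the tail of the series.

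The principal obstacle I anticipate is the technical deployment of GRR on $(K,d,\mu)$ with the correct Young function and gauge $p$, together with the careful verification that the entropy integral (against $\mu\otimes\mu$) converges under Ahlfors $d_h$-regularity, so as to produce the exact modulus in \eqref{ModContX} with the specified log exponent $\beta_s+\tfrac{1}{2}$. A secondary subtlety is the precise form of Rosi\'nski's boundedness criterion in the non-Euclidean setting of $K$; this should follow from the general formulation in \cite{Rosinski89} applied to the Radon measure $\mu$, but may require a short measurability argument, or, alternatively, a direct Poisson-point-process argument on the LePage series \eqref{Field-Y-Series} as sketched above.
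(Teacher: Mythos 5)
Your overall strategy coincides with the paper's on both parts: part 1 via the LePage representation, the pointwise H\"older bound of Theorem \ref{Holder-Riesz} on the conditional variance, and the Barlow--Perkins form of the Garsia--Rodemich--Rumsey inequality; part 2 via the lower bounds of Proposition \ref{Control-G} and the necessity of an integrable $L^\alpha$ envelope for sample boundedness (the paper cites Theorem 10.2.3 of \cite{taqqu}, which is the Rosi\'nski-type criterion you invoke; note it is only a \emph{necessary} condition, not the equivalence you state, but necessity is the direction you actually use, so part 2 goes through). Two points in part 1 need repair.

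First, your gauge $p(r)=r^{\eta_s}\max(|\ln r|,1)^{\beta_s+1/2}$ double-counts the Gaussian-tail factor. With $\psi(u)=e^{u^2/2}-1$ the GRR conclusion is $|f(x)-f(y)|\le 8\int_0^{d(x,y)}\psi^{-1}\bigl(c_K\Gamma/u^{2d_h}\bigr)\,p(du)$ with $\psi^{-1}(v)\sim\sqrt{\ln v}\sim\sqrt{|\ln u|}$, so your choice produces a modulus of order $d(x,y)^{\eta_s}|\ln d(x,y)|^{\beta_s+1}$, for which the limsup in \eqref{ModContX} is infinite. The correct choice, and the one the paper makes, is to take the gauge matching the conditional standard deviation, namely $p_s(r)=r^{\eta_s}|\ln r|^{\beta_s}$, and let the extra $|\ln|^{1/2}$ emerge from the $\psi^{-1}$ factor inside the GRR integral.

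Second, the GRR lemma on the gasket (Lemma \ref{BPlem}) yields the increment bound for \emph{all} pairs $(x,y)$ only when $f$ is continuous; applied directly to the limit series $Y_{s,\alpha}$, whose continuity is precisely what is at stake, it only gives a $\mu\otimes\mu$-a.e.\ statement. The paper circumvents this by applying GRR to the continuous partial sums $Y_{s,\alpha,N}$ and bounding $\mathbb{E}\bigl(\sup_{N}\Gamma_N\bigr)$ via L\'evy's maximal inequality conditionally on $(T_n,\xi_n)_{n\ge 1}$; this gives equicontinuity uniformly in $N$, hence uniform convergence of the series and the modulus for the limit. Finally, a transfer step is missing from your sketch: the modulus is obtained for $Y_{s,\alpha}$, which only shares its finite-dimensional distributions with $\widetilde X_{s,\alpha}$, so one must restrict to a countable dense set $\mathcal D$, deduce that $\widetilde X_{s,\alpha}$ a.s.\ satisfies the same modulus on $\mathcal D$, and extend by density and stochastic continuity to construct the modification $\widetilde X^*_{s,\alpha}$.
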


\begin{remark} \hfill
\begin{enumerate}
\item For $\alpha=2$, i.e. for the Neumann fractional Gaussian field $\widetilde{X}_{s,2}$, Equation \eqref{ModContX} has been stated in Theorem 3.8 of \cite{BL22-FGF} when $\frac{d_h}{2d_w}<s<1-\frac{d_h}{2d_w}$ and can be extended to any $s>\frac{d_h}{2d_w}$ following \cite{BC22-DFGF} which deals with the Dirichlet fractional Laplacian. 
\item Theorem 3.8 extends the known results for moving average  and log-fractional stable fields indexed by the Euclidean space $\mathbb R^d$. Actually,  an analogous of the upper bound \eqref{ModContX} has been established in \cite{KM91} when $d=1$,   $1<\alpha<2$ and $1/\alpha<H<1$ (which corresponds to $\frac{1}{2}<s< 1-\frac{1}{2\alpha}$). In all the other cases, moving average fractional stable random fields have almost surely unbounded sample paths on any non empty open subset of $\mathbb R^d$ and so have  log-fractional stable random fields, see \cite{taqqu,MR2531090}. 
\item As in the Euclidean framework, contrary to  Neumann fractional Gaussian random fields  ($\alpha=2$), when $\alpha<2$, the Neumann fractional stable random field $\widetilde{X}_{s,\alpha}$  does not always have continuous sample paths. Note also that  the restriction of any Euclidean moving average or log-fractional stable field is unbounded almost surely on the Sierpi\'nski gasket $K$ whereas the  Neumann fractional stable field $\widetilde{X}_{s,\alpha}$ has H\"olderian sample paths on $K$ as soon as $s>\frac{d_h}{d_w}$.  
\end{enumerate}  
\end{remark}

We divide the proof in several steps. We first assume that $s>d_h/d_w$ and  study   the sample paths regularity of  the series $Y_{s,\alpha}$,  which has the same finite distributions as $\widetilde{X}_{s,\alpha}$. To do so, we first consider  the partial sum $Y_{s,\alpha,N}$ defined by 
$$
Y_{s,\alpha,N}(x)=D_\alpha\sum_{n=1}^N T_n^{-1/\alpha} G_s(x,\xi_n) g_n, \quad x\in K. 
$$
Note that since $s>d_h/d_w$, by Theorem \ref{Holder-Riesz}, each $G_s(\cdot,\xi_n)$ is continuous on $K$ and then $Y_{s,\alpha,N}$ has continuous sample paths on $K$. Moreover, conditionally to $(T_n,\xi_n)_{n\ge 1}$, for any integer $N\ge 1$, $Y_{s,\alpha,N}$ is a centered real-valued Gaussian random field and the  increment   $Y_{s,\alpha,N}(x)-Y_{s,\alpha,N}(y)$ is a centered real-valued Gaussian variable with variance 
{
\begin{equation}
\label{var:Lepage:YN}
s_{\alpha,N}(x,y)^2=D_\alpha^2\,\mathbb{E}\left(g_1^2\right)\sum_{n=1}^{N} T_n^{-2/\alpha}  \left|G_s(x,\xi_n)-G_s(y,\xi_n)\right|^2 \le s_{\alpha}(x,y)^2,
\end{equation}
}
   where we recall that ${s_{\alpha}(x,y)^2}$ is given by \eqref{var totale}.

\begin{proposition} 
\label{ControlSums} Let $\alpha \in (0,2)$ and $s>\frac{d_h}{d_w}$. Let  $\eta_s$ and  $\beta_s$ be defined as  in Theorem \ref{SPR}. 

\begin{enumerate}
\item There exist  $h_s>0$ and  a finite positive random variable $C$ such that  a.s for any $x,y\in K$ such that $d(x,y)\le h_s$, 
\begin{equation}
\label{ControlPartialSums2}
\sup_{N\ge 1} \left| Y_{s,\alpha,N}(x)-Y_{s,\alpha,N}(y)\right| \le C d(x,y)^{\eta_s}(-\ln d(x,y))^{\beta_s+\frac{1}{2}}. 
\end{equation}
\item Moreover, almost surely, the series $(Y_{s,\alpha,N})_{N\ge 1}$ converges uniformly on $K$ to $Y_{s,\alpha}$, and  $Y_s$ also satisfies~\eqref{ControlPartialSums2}, that is   a.s for any $x,y\in K$ such that $d(x,y)\le h_s$, 
\begin{equation}
\label{ControlY}
{\left| Y_{s,\alpha}(x)-Y_{s,\alpha}(y)\right| \le C d(x,y)^{\eta_s}(-\ln d(x,y))^{\beta_s+\frac{1}{2}}.}
\end{equation}
\end{enumerate}
\end{proposition}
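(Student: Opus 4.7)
The plan is to exploit the fact that, conditionally on $(T_n,\xi_n)_{n\ge 1}$, the partial sum $Y_{s,\alpha,N}$ is a centered Gaussian random field whose variance function is dominated by the full series $s_\alpha(x,y)^2$, uniformly in $N$ by \eqref{var:Lepage:YN}, and to feed this control into the Garsia-Rodemich-Rumsey inequality on the gasket as proved in \cite{BP88}.

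The first step is a deterministic variance estimate. By Theorem \ref{Holder-Riesz}, for any $z\in K$ and any $x,y\in K$,
\[
|G_s(x,z) - G_s(y,z)| \le C\, d(x,y)^{\eta_s}\, \max(|\ln d(x,y)|,1)^{\beta_s},
\]
with $C$ independent of $z$. Substituting into \eqref{var totale} yields
\[
s_{\alpha,N}(x,y)^2 \le s_\alpha(x,y)^2 \le R^2\, d(x,y)^{2\eta_s}\, \max(|\ln d(x,y)|,1)^{2\beta_s},
\]
where $R^2 = C^2 D_\alpha^2\, \mathbb{E}(g_1^2) \sum_{n\ge 1} T_n^{-2/\alpha}$ is an a.s.\ finite random variable, since $T_n \sim n$ a.s.\ and $2/\alpha>1$.

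Next, conditionally on $(T_n,\xi_n)_{n\ge 1}$, the field $Y_{s,\alpha,N}$ is Gaussian and continuous, being a finite linear combination of the continuous functions $G_s(\cdot,\xi_n)$ (continuous thanks to $s>d_h/d_w$ and Theorem \ref{Holder-Riesz}). Applying the Garsia-Rodemich-Rumsey inequality for random fields indexed by $K$, as proved in \cite{BP88}, with the Young function $\Psi(u)=e^{u^2}-1$ together with the $d_h$-Ahlfors regularity of $\mu$ and the variance bound above, produces a conditionally a.s.\ finite constant $M$ such that, for all $N\ge 1$ and all $x,y\in K$ with $d(x,y)\le h_s$,
\[
|Y_{s,\alpha,N}(x)-Y_{s,\alpha,N}(y)| \le M \int_0^{d(x,y)} u^{\eta_s-1}\,\max(|\ln u|,1)^{\beta_s}\, \sqrt{|\ln u|}\, du.
\]
A direct computation of this integral, using $\eta_s>0$, gives a bound of order $d(x,y)^{\eta_s}(-\ln d(x,y))^{\beta_s+1/2}$, which establishes \eqref{ControlPartialSums2}; crucially, because $s_{\alpha,N}\le s_\alpha$ in the GRR input, $M$ can be chosen independently of $N$.

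For the second assertion, Proposition \ref{FSM_SeriesRepresentation} gives a.s.\ convergence of $Y_{s,\alpha,N}(x)$ for each fixed $x\in K$. Combined with the $N$-uniform equicontinuity from \eqref{ControlPartialSums2}, a standard Arzel\`a-Ascoli argument on the compact metric space $K$ yields a.s.\ uniform convergence of $Y_{s,\alpha,N}$ to $Y_{s,\alpha}$ on $K$, and \eqref{ControlY} is obtained by letting $N\to\infty$ in \eqref{ControlPartialSums2}. The main delicate point will be to verify that the form of the Garsia-Rodemich-Rumsey inequality available in \cite{BP88} applies conditionally to accommodate the extra logarithmic factor present when $s\ge 1$; once this is done, the $N$-uniform constant follows transparently from the domination $s_{\alpha,N}\le s_\alpha$.
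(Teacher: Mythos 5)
Your overall strategy (conditional Gaussianity, the uniform variance bound $s_{\alpha,N}\le s_\alpha$ from Theorem \ref{Holder-Riesz}, and the Garsia--Rodemich--Rumsey inequality of \cite{BP88}) is the same as the paper's, but there is a genuine gap at the central step. The GRR inequality is a pathwise, deterministic statement: the constant it produces for the field $Y_{s,\alpha,N}$ is (a function of) the random variable
$\Gamma_N=\int_{K\times K}\psi\bigl(|Y_{s,\alpha,N}(x)-Y_{s,\alpha,N}(y)|/(A d_2\, p_s(d(x,y)))\bigr)\mu(dx)\mu(dy)$,
which depends on the realized sample path, not on the conditional variance. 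The domination $s_{\alpha,N}\le s_\alpha$ only controls the conditional law of each increment, hence gives $\sup_N \mathbb{E}(\Gamma_N)<+\infty$; it does \emph{not} give $\sup_N\Gamma_N<+\infty$ almost surely, which is what you need to extract a single a.s.\ finite constant valid for all $N$ in \eqref{ControlPartialSums2}. A supremum of countably many a.s.\ finite random variables with uniformly bounded means can perfectly well be infinite, so the assertion that ``$M$ can be chosen independently of $N$'' does not ``follow transparently'' from the variance domination. This is exactly the point the paper addresses in Lemma \ref{ControlGammaN}: one bounds $\mathbb{E}\bigl(\sup_{N\le M}\Gamma_N\bigr)$ by moving the supremum inside $\psi$, conditioning on $(T_n,\xi_n)_{n\ge1}$, and applying L\'evy's maximal inequality for partial sums of independent symmetric (here conditionally Gaussian) terms to reduce $\mathbb{P}(\sup_{N\le M}|Y_{s,\alpha,N}(x)-Y_{s,\alpha,N}(y)|>\lambda)$ to the tail of the last partial sum, then a standard Gaussian tail bound and monotone convergence in $M$. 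Without this maximal-inequality step your argument does not close.

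A secondary remark: you identify the ``delicate point'' as accommodating the logarithmic factor when $s\ge1$ inside the GRR framework, but that is the easy part --- it is handled by choosing the gauge $p_s(h)=h^{\eta_s}(-\ln h)^{\beta_s}$ (made increasing near $0$) and computing $\int_0^r\sqrt{|\ln u|}\,p_s(du)$ by integration by parts. The rest of your proposal (the variance estimate, the computation of the GRR integral giving the exponent $\beta_s+\tfrac12$, and the Arzel\`a--Ascoli / dense-subset argument for uniform convergence in Assertion 2) matches the paper and is fine once the uniformity in $N$ of $\Gamma_N$ is established.
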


The proof relies on two main ingredients. The following lemma which gives an upper bound  for the conditional variance $s_{\alpha,N}(x,y)^2$, uniformly in $N$,  and the Garsia-Rodemich-Rumsey inequality. 

\begin{lemma} 
\label{ControlVarCond} Let $\alpha \in (0,2)$ and $s>\frac{d_h}{d_w}$. Then there exists a finite positive random variable $A$ such that  a.s for any $x,y\in K$, 
$$\sup_{N\ge 1}
s_{\alpha,N}(x,y)  =s_{\alpha}(x,y)\le  A d(x,y)^{\eta_s} \max\left(\left|\ln  d(x,y)\right|^{\beta_s},1\right) 
$$
with $\eta_s$ and  $\beta_s$ given in Theorem \ref{SPR}. 
\end{lemma}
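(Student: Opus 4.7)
The plan is to reduce the bound to the uniform H\"older estimate for the Riesz kernel from Theorem \ref{Holder-Riesz} and then handle the Poisson series $\sum T_n^{-2/\alpha}$. The equality $\sup_{N\ge 1} s_{\alpha,N}(x,y) = s_\alpha(x,y)$ is immediate: the squared partial sums $s_{\alpha,N}(x,y)^2$ are monotone nondecreasing in $N$ and converge to $s_\alpha(x,y)^2$, as implicit in the almost sure convergence of the LePage series from Proposition \ref{FSM_SeriesRepresentation}. So only the upper bound requires work.

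For the main estimate, I would apply Theorem \ref{Holder-Riesz}, which, since $s > d_h/d_w$, produces a deterministic constant $C > 0$ such that
\[
|G_s(x,z) - G_s(y,z)| \le C\, d(x,y)^{\eta_s} \max\bigl(|\ln d(x,y)|^{\beta_s}, 1\bigr)
\]
for every $x,y,z \in K$. The crucial feature is that this bound is \emph{uniform in the third argument} $z$, so squaring and evaluating at $z = \xi_n$ gives, termwise in \eqref{var totale},
\[
s_\alpha(x,y)^2 \le D_\alpha^2\, \mathbb{E}(g_1^2)\, C^2\, d(x,y)^{2\eta_s} \max\bigl(|\ln d(x,y)|^{2\beta_s}, 1\bigr) \sum_{n=1}^{+\infty} T_n^{-2/\alpha}.
\]

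It then remains to check that the random constant $\Sigma := \sum_{n=1}^{+\infty} T_n^{-2/\alpha}$ is almost surely finite. Since $(T_n)_{n \ge 1}$ is the sequence of arrival times of a Poisson process of intensity $1$, the strong law of large numbers yields $T_n/n \to 1$ almost surely, whence $T_n^{-2/\alpha}$ is equivalent to $n^{-2/\alpha}$ a.s. Because $\alpha \in (0,2)$ gives $2/\alpha > 1$, the series converges a.s.\ by comparison. Setting
\[
A := D_\alpha\, \bigl(\mathbb{E}(g_1^2)\bigr)^{1/2} C\, \Sigma^{1/2}
\]
then produces the desired bound.

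There is no serious obstacle: the whole argument is a clean factorization enabled by the uniformity in $z$ of Theorem \ref{Holder-Riesz}, combined with the standard convergence of $\sum T_n^{-2/\alpha}$ when $\alpha < 2$. It is worth emphasizing that $A$ depends on the random environment only through $(T_n)_{n\ge 1}$; it is independent of the sampling points $\xi_n$ and of the Gaussian sequence $(g_n)$, which is exactly what will be needed to combine the estimate with the Garsia--Rodemich--Rumsey inequality in the proof of Proposition \ref{ControlSums}.
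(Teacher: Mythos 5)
Your proof is correct and follows essentially the same route as the paper: both arguments apply the uniform-in-$z$ H\"older estimate of Theorem \ref{Holder-Riesz} termwise in \eqref{var totale} and then invoke the almost sure convergence of $\sum_{n\ge 1} T_n^{-2/\alpha}$, which holds since $2/\alpha>1$ (the paper cites Theorem 1.4.5 of \cite{taqqu}, while you give the standard law-of-large-numbers justification, $T_n \sim n$ a.s.). The additional remarks on the monotone convergence $\sup_N s_{\alpha,N}=s_\alpha$ and on the fact that $A$ depends only on $(T_n)_{n\ge 1}$ are accurate.
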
 
\begin{proof} As a direct consequence of Theorem \ref{Holder-Riesz}, by definition of $\eta_s$ and $\beta_s$, 
$$
\sup_{N\ge 1}
s_{\alpha,N}(x,y)^2  =s_{\alpha}(x,y)^2\le A^2 d(x,y)^{2\eta_s} \max\left(\left| \ln d(x,y)\right|^{\beta_s},1\right)^2
$$
where $A^2=c\sum_{n=1}^{+\infty} T_n^{-2/\alpha} $, with $c$ a positive constant, converges almost surely since $2/\alpha>1$ (see e.g. Theorem 1.4.5 in \cite{taqqu}). 
\end{proof}

\vskip 5pt

Let us now recall the Garsia-Rodemich-Rumsey inequality for fractals, established by Barlow and Perkins in   Lemma 6.1 of \cite{BP88}. 
\begin{lemma}\label{BPlem} Let 
	$p$ be an increasing continuous function on $[0,\infty)$ such that $p(0)=0,$ and $\psi : \mathbb{R}\rightarrow  \mathbb{R}_+$ be a non-negative symmetric continuous convex function such that  $\lim_{u\to \infty} \psi(u)=\infty$. Let $f: K\rightarrow  \mathbb{R}$ be a measurable function such that 
	$$
	\Gamma =\int_{K\times K} \psi \left(\frac{\left|f(x)-f(y)\right|}{p(d(x,y))}\right)\mu(dx)\mu( dy)
	<\infty.$$
	Then there exists a constant $c_K$ depending only on  $d_h$ such that 
\begin{equation}
\label{GRRinequality}
	\left|f(x)-f(y)\right|\le 8\int_0^{d(x,y)} \psi^{-1}\left(\frac{c_K \Gamma}{u^{2d_h}}\right) \,p({\rm d}u)
	\end{equation}
	for $\mu\times \mu$-almost all $(x,y)\in K\times K$.  Moreover if $f$ is continuous on $K$, then \eqref{GRRinequality} holds for any $(x,y)\in K\times K$. 
	\end{lemma}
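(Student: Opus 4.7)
My plan is to adapt the classical Garsia--Rodemich--Rumsey argument to the Ahlfors-regular fractal setting, with the Euclidean volume estimate $r^d$ replaced by the Ahlfors estimate $r^{d_h}$ from \eqref{Ahlfors}. The whole proof should rest on averaging $f$ over a decreasing sequence of balls and using Jensen's inequality together with the double-integral hypothesis on $\Gamma$.

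Fix $x,y\in K$ with $d(x,y)=d>0$. Introduce a geometric sequence of radii $r_n=2^{-n}d$, $n\ge 0$, and the balls $B_n^x=B(x,r_n)$, $B_n^y=B(y,r_n)$. For any ball $B\subset K$ with $\mu(B)>0$, write $m(B)=\mu(B)^{-1}\int_B f\,d\mu$. The first step is to control the increment of the ball averages along one descending sequence: for $z\in B_n^x$ and $w\in B_{n+1}^x$ we have $d(z,w)\le 3r_n$, so by Jensen's inequality applied to the convex function $\psi$ and the increasing function $p$,
\[
\psi\!\left(\frac{|m(B_n^x)-m(B_{n+1}^x)|}{p(3r_n)}\right)
\le \frac{1}{\mu(B_n^x)\mu(B_{n+1}^x)}\int_{B_n^x}\!\int_{B_{n+1}^x}\!\psi\!\left(\frac{|f(z)-f(w)|}{p(d(z,w))}\right)\mu(dz)\mu(dw)
\le \frac{\Gamma}{\mu(B_n^x)\mu(B_{n+1}^x)}.
\]
Using the Ahlfors lower bound $\mu(B_n^x)\ge c r_n^{d_h}$ and applying $\psi^{-1}$, this becomes
\[
|m(B_n^x)-m(B_{n+1}^x)|\le p(3r_n)\,\psi^{-1}\!\left(\frac{c_K\Gamma}{r_n^{2d_h}}\right),
\]
for a constant $c_K$ depending only on $d_h$.

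The second step is to telescope and pass to the limit. When $f$ is continuous, $m(B_n^x)\to f(x)$ as $n\to\infty$; in general $\mu$ is doubling, so by Lebesgue differentiation this convergence holds for $\mu$-a.e. $x$. Summing the increment bound,
\[
|f(x)-m(B_0^x)|\le \sum_{n\ge 0} p(3r_n)\,\psi^{-1}\!\left(\frac{c_K\Gamma}{r_n^{2d_h}}\right),
\]
and the same for $y$. Since $u\mapsto \psi^{-1}(c_K\Gamma/u^{2d_h})$ is decreasing, a standard step-function comparison identifies the right-hand side with (a constant multiple of) the Riemann--Stieltjes integral $\int_0^{d(x,y)}\psi^{-1}(c_K\Gamma/u^{2d_h})\,p(du)$. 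Finally I compare $m(B_0^x)$ and $m(B_0^y)$ by sandwiching both into the common ball $B=B(x,2d)\supset B_0^y$, and applying Jensen one more time at scale $d$ gives $|m(B_0^x)-m(B_0^y)|\le 2p(4d)\psi^{-1}(c_K\Gamma/d^{2d_h})$, which is absorbed into the same integral bound. Combining these three contributions yields \eqref{GRRinequality}, first for $\mu\times\mu$-almost every $(x,y)$, and then everywhere by continuity of $f$ in the second regime.

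The main obstacle I expect is the bookkeeping between the discrete telescoping sum and the continuous integral in the conclusion: one must verify that the geometric decay of $r_n$ together with monotonicity of $\psi^{-1}$ and $p$ produces exactly the integral $\int_0^{d(x,y)}\psi^{-1}(c_K\Gamma/u^{2d_h})\,p(du)$, with the explicit constant $8$ of \eqref{GRRinequality}. A secondary subtlety is that the Ahlfors bound \eqref{Ahlfors} is stated only for $r\in(0,1]$, so one should check separately (using ${\rm diam}\,K=1$) that the initial scale $r_0=d\le 1$ remains in the valid range, which is automatic here.
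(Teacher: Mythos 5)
The paper does not actually prove this lemma: it is quoted verbatim from Barlow and Perkins \cite{BP88} (Lemma 6.1 there), so the only thing to assess is whether your argument stands on its own. Your overall strategy (dyadic ball averages, Jensen's inequality for the convex symmetric $\psi$, and the Ahlfors lower bound $\mu(B(x,r))\ge c r^{d_h}$ to produce the factor $u^{-2d_h}$) is the right one and matches the standard metric-space adaptations of Garsia--Rodemich--Rumsey. The increment estimate $|m(B_n^x)-m(B_{n+1}^x)|\le p(3r_n)\,\psi^{-1}\bigl(c_K\Gamma/r_n^{2d_h}\bigr)$ is correct.

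The genuine gap is the step you describe as ``a standard step-function comparison.'' With the \emph{fixed} geometric radii $r_n=2^{-n}d$, the telescoped sum $\sum_n p(3r_n)\,\psi^{-1}\bigl(c_K\Gamma/r_n^{2d_h}\bigr)$ is \emph{not} in general dominated by a constant multiple of $\int_0^{d}\psi^{-1}\bigl(c_K\Gamma/u^{2d_h}\bigr)\,p(du)$. The integral only sees the \emph{increments} of $p$ over $[0,d]$, while your sum involves the \emph{values} $p(3r_n)$; the comparison would require $p(3r_n)\le C\,(p(r_n)-p(r_{n+1}))$, which fails for a general continuous increasing $p$ (take $p$ nearly constant across many dyadic scales with all of its variation concentrated near $0$: the sum picks up one full copy of $p(d)$ per scale while the integral is of order $p(d)\,\psi^{-1}(c_K\Gamma/\delta^{2d_h})$ for a single small $\delta$, and the ratio is unbounded; already for $p(u)=u^{\varepsilon}$ the implied constant blows up as $\varepsilon\to 0$). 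The same defect appears in your last step, where $p(4d)\,\psi^{-1}(c_K\Gamma/d^{2d_h})$ cannot be absorbed into the integral because $p(4d)/p(d)$ is unbounded over admissible $p$. This is precisely why the original GRR argument (and its fractal version in \cite{BP88}) chooses the scales \emph{adaptively with respect to $p$}, e.g.\ taking $r_{n+1}$ to be the largest radius with $p(r_{n+1})\le\frac12 p(r_n)$, so that $p(r_n)\le 2\,(p(r_n)-p(r_{n+1}))$ and each term of the sum is genuinely controlled by the Stieltjes measure $p(du)$ on $[r_{n+2},r_{n+1}]$; the descent must also be arranged so that $p$ is never evaluated at arguments exceeding $d(x,y)$, which your common ball $B(x,2d)$ does not achieve. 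Your argument can be repaired along these lines, but as written the passage from the sum to the integral, with the universal constant $8$, does not hold.
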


In the sequel, we choose  
\begin{equation}
    \label{choix psi}
\psi(u)=\exp\left(\frac{u^2}{4}\right)-1.
\end{equation} 
The choice of $p=p_s(d(x,y))$ is a slight modification of the upper bound stated in Lemma \ref{ControlVarCond} in order to obtain an increasing function.  
Let us first note  that since $\eta_s>0$, there exists $h_s\in(0,1/\textup{e}]$ small enough such that  the function $p_s$ defined by 
$$
p_s(h)=\left\{\begin{array}{ll}
0&\textrm{if $h=0$}\\
h^{\eta_s}\max\left(\left| \ln h\right|^{\beta_s},1\right)=h^{\eta_s}\left(- \ln h\right)^{\beta_s}&\textrm{if $h\in (0,h_s]$}\\
\end{array}
\right.
$$
is a continuous and increasing on $[0,h_s]$. {We then extend the definition of $p_s$  so that $p_s$ is still a continuous and increasing function  on the  interval $[0,1]$.} 
As a consequence, there exist some positive constants $d_1,d_2$ such that  for any $h\in (0,1],$
\begin{equation}
\label{comp-p}
d_1p_s(h)\le h^{\eta_s}\max\left(\left| \ln h\right|^{\beta_s},1\right)\le d_2 p_s(h).
\end{equation}

Then we will apply Garsia-Rodemich-Rumsey inequality choosing  $f=Y_{s,N}/(A d_2) $ with $A$ the positive random variable introduced in Lemma \ref{ControlVarCond}. This leads to study 
$$
\Gamma_N=\int_{K\times K}  \psi \left(\frac{\left|Y_{s,\alpha,N}(x)-Y_{s,\alpha,N}(y)\right|}{A d_2  \, p_s(d(x,y))}\right)\mu(dx)\mu(dy).
$$
\begin{lemma}  \label{ControlGammaN} 
Let $\alpha \in (0,2)$ and $s>\frac{d_h}{d_w}$. Then, with the previous notations, 
$
\mathbb{E}\left(\sup_{N\ge 1} \Gamma_N\right)<+\infty. 
$
\end{lemma}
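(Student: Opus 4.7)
The plan is to condition on $\mathcal{G}=\sigma(T_n,\xi_n:n\ge 1)$ and exploit the conditional Gaussian martingale structure of $(Y_{s,\alpha,N})_N$ in $N$. For each pair $(x,y)\in K\times K$ with $x\ne y$, set
\[
M_N(x,y)=Y_{s,\alpha,N}(x)-Y_{s,\alpha,N}(y),\qquad Z_N(x,y)=\frac{M_N(x,y)}{Ad_2\,p_s(d(x,y))}.
\]
Conditionally on $\mathcal{G}$, the increment $M_N(x,y)$ is a sum of independent centered Gaussians and is therefore a Gaussian martingale in $N$; by Lemma \ref{ControlVarCond} combined with the two-sided comparison \eqref{comp-p}, its conditional variance satisfies $s_{\alpha,N}(x,y)^2\le s_\alpha(x,y)^2\le (Ad_2 p_s(d(x,y)))^2$. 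Hence $(Z_N(x,y))_N$ is a conditional Gaussian martingale whose conditional variance is at most $1$.

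The second step is to apply Doob's $L^p$ maximal inequality to the non-negative conditional submartingale $N\mapsto\exp(Z_N(x,y)^2/4)$, for some exponent $p\in(1,2)$. Choosing for concreteness $p=3/2$, the elementary Gaussian moment-generating identity gives, uniformly in $N$ and in $(x,y)$,
\[
\mathbb{E}\!\left[\exp\!\Bigl(\tfrac{3}{8}Z_N(x,y)^2\Bigr)\Big|\mathcal{G}\right]=\Bigl(1-\tfrac{3}{4}\mathrm{Var}(Z_N\mid\mathcal{G})\Bigr)^{-1/2}\le 2\qquad\textup{a.s.}
\]
Doob's $L^{3/2}$ inequality followed by Lyapunov's inequality then produces a deterministic constant $C$, independent of $(x,y)$ and of $\mathcal{G}$, such that $\mathbb{E}[\sup_{N\ge 1}\psi(Z_N(x,y))]\le C$.

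To conclude I would push the supremum past the double integral: since $\sup_N \Gamma_N\le \int_{K\times K}\sup_N\psi(Z_N(x,y))\,\mu(dx)\mu(dy)$, Tonelli's theorem together with the previous bound yields $\mathbb{E}[\sup_{N\ge 1}\Gamma_N]\le C\mu(K)^2<\infty$. The one delicate point is the threshold exponent: the conditional variance bound $\le 1$ has no slack, so $\mathbb{E}[\exp(Z_N^2/4)\mid\mathcal{G}]$ is only just barely finite in the worst case. This is what forces Doob's inequality to be applied with some $p\in(1,2)$ strictly less than $2$, and what motivates the intermediate Lyapunov step before integrating over $K\times K$; everything else reduces to bookkeeping through the definitions of $A$, $d_2$, and $p_s$.
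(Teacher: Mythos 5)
Your proof is correct, and it takes a genuinely different route from the paper's. The paper bounds $\mathbb{E}\bigl[\psi\bigl(\sup_{1\le N\le M}|Z_N(x,y)|\bigr)\bigr]$ by the layer-cake formula, reduces $\sup_{N\le M}$ to the single index $M$ via L\'evy's maximal inequality for sums of independent symmetric variables (Proposition 2.3 of \cite{Ledoux-Talagrand}, applied conditionally on $(T_n,\xi_n)_{n\ge1}$), and then finishes with the standard Gaussian tail bound and an explicit integral. You instead exploit the conditional martingale structure directly: $\exp(Z_N^2/4)$ is a non-negative conditional submartingale, Doob's $L^{3/2}$ inequality controls its running maximum by its terminal value, and the Gaussian moment-generating identity $\mathbb{E}[e^{\lambda Z^2}]=(1-2\lambda\sigma^2)^{-1/2}$ with $\sigma^2\le 1$ gives the uniform bound $2$; Lyapunov then returns you to $L^1$, and Tonelli plus monotone convergence in $M$ finish exactly as in the paper. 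Your diagnosis of the delicate point is accurate and is precisely what makes your argument hang together: since the conditional variance bound is $\le 1$ with no slack, $\mathbb{E}[e^{pZ_M^2/4}\mid\mathcal G]$ is finite only for $p<2$, while Doob requires $p>1$, so some intermediate $p\in(1,2)$ is forced. Roughly speaking, the paper's L\'evy-inequality route works at the level of tail probabilities and would apply to any symmetric summands, whereas your Doob route is more self-contained (no layer-cake computation, no separate Gaussian tail estimate) but is tied to the specific convex function $\psi(u)=e^{u^2/4}-1$ and to the exact Gaussian MGF. Both deliver a deterministic constant independent of $(x,y)$, which is all that is needed before integrating over $K\times K$.
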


\begin{proof}   
For any integer $M\ge 1$, 
$$
\sup_{1\le N\le M}
\Gamma_N \le  \int_{K\times K}  \psi \left(\sup_{1\le N\le M}\frac{\left|Y_{s,\alpha,N}(x)-Y_{s,\alpha,N}(y)\right|}{{A}d_2  p_s(d(x,y))}\right)\mu(dx)\mu(dy)
$$
 since  the function $\psi$ given by \eqref{choix psi} {is} a non-decreasing function.  Then, by Fubini-Tonelli theorem, 
$$
\mathbb{E}\left(\sup_{1\le N\le M} \Gamma_N\right)\le \int_{K\times K} I_M(x,y)\mu(dx)\mu(dy)
 $$
 where for $x\ne y$,
 $$
 I_M(x,y)=\mathbb{E}\left( \psi \left(\sup_{1\le N\le M}\frac{\left|Y_{s,\alpha,N}(x)-Y_{s,\alpha,N}(y)\right|}{{A}d_2 \, p_s(d(x,y))}\right)\right). 
 $$
Moreover, since $\psi$ is non-negative,
$$
I_M(x,y)=\displaystyle \int_{0}^{+\infty} \mathbb{P}\left( \psi \left(\sup_{1\le N\le M}\frac{\left|Y_{s,\alpha,N}(x)-Y_{s,\alpha,N}(y)\right|}{{A}d_2  p_s(d(x,y))}\right)> t\right) dt
$$
so that, by definition \eqref{choix psi} of $\psi$ and a simple change of variables, 
$$
I_M(x,y)=\int_1^{+\infty} \mathbb{P}\left( \sup_{1\le N\le M}\left|Y_{s,\alpha,N}(x)-Y_{s,\alpha,N}(y)\right|> 2 {A}d_2  p_s(d(x,y)) \sqrt{\ln t}
\right) dt.
$$
Now by Lemma \ref{ControlVarCond} and Equation \eqref{comp-p}, since $d(x,y)\le 1$,
$$
{A}d_2  p_s(d(x,y))\ge {s_{\alpha,M}(x,y)}
$$
{where we recall that $s_{\alpha,M}(x,y)$ is given by \eqref{var:Lepage:YN}}
and then  
$$
I_M(x,y)=\int_1^{+\infty} \mathbb{P}\left( \sup_{1\le N\le M}\left|Y_{s,\alpha,N}(x)-Y_{s,\alpha,N}(y)\right| > 2{s_{\alpha,M}(x,y)} \sqrt{\ln t}
\right) dt.
$$
Then,   conditioning to $(T_n,\xi_n)_{n\ge 1}$ and  applying {Proposition 2.3 of \cite{Ledoux-Talagrand}} lead to 
$$
I_M(x,y)
 \le \int_1^{+\infty}\mathbb{E}\left(  \mathbb{P}\left( \left. \left|Y_{s,\alpha,M}(x)-Y_{s,\alpha,M}(y)\right| > 2{s_{\alpha,M}(x,y)} \sqrt{\ln t}\, \right| (T_n,\xi_n)_{n\ge 1}
\right)\right) dt
$$
since the random variables $g_n$ are i.i.d. and symmetric. Therefore,  
since, conditionally to $(T_n,\xi_n)_{n\ge 1}$,  $Y_{s,\alpha,M}(x)-Y_{s,\alpha,M}(y)$ is a centered real-valued Gaussian variable with variance ${s_{\alpha,M}(x,y)^2}$, 
$$
I_M(x,y)
 \le \int_1^{+\infty}  \mathbb{P}\left( |Z| > 2 \sqrt{\ln t}\right) dt
$$
with $Z$ a standard normal random variable. Moreover, 
$$\forall \lambda >0, \  
\mathbb{P}\left( |Z| > \lambda\right)\le  \sqrt{\frac{2}{\pi}}\frac{\textup{e}^{-\frac{\lambda^2}{2}}}{\lambda}
$$
and so 
$$
I_M(x,y)\le \frac{1}{\sqrt{2\pi}}\int_{ 1}^{+\infty} \frac{1}{t^2 \sqrt{\ln t}} \, dt :=c <+\infty.
$$
Therefore,  
$$
\mathbb{E}\left(\sup_{1\le N\le M} \Gamma_N\right)\le c \mu(K)^2=c
$$
and then   by the monotone convergence theorem, 
$
\mathbb{E}\left(\sup_{ N\ge 1} \Gamma_N\right)\le c  <+\infty.
$ 
\end{proof} 

\medskip

We are now ready to prove Proposition \ref{ControlSums}. 

\begin{proof}[Proof of Proposition \ref{ControlSums}]\hfill
\begin{enumerate}
\item Let $\tilde{\Omega}=\left\{\sup_{N\ge 1} \Gamma_N <+\infty\right\}$.  By Lemma \ref{ControlGammaN}, $\mathbb{P}\left(\tilde{\Omega}\right)=1$ and then it is sufficient to establish~\eqref{ControlPartialSums2} for $\omega\in\tilde{\Omega}$.  So let from now $\omega \in \tilde{\Omega}$. 
Then by Garsia-Rodemich-Rumsey inequality (see Lemma~\ref{BPlem}), for each integer $N\ge 1$,   
$$
\forall x,y\in K, \ \left|Y_{s,\alpha,N}(x,\omega)-Y_{s,\alpha,N}(y,\omega)\right|\le 8A d_2\int_0^{d(x,y)} \psi^{-1}\left( \frac{c_K\Gamma_N(\omega)}{u^{2d_h}}\right) p_s(du)
$$
 since each $f_N=Y_{s,\alpha,N}/(A d_2)$ is continuous on $K$.  
Then since $\psi$ is defined by Equation \eqref{choix psi}, 
$$
\forall x,y\in K, \ \sup_{N\ge 1} \left|Y_{s,\alpha,N}(x,\omega)-Y_{s,\alpha,N}(y,\omega)\right|\le 16 A d_2\int_0^{d(x,y)} \sqrt{\ln\left(1+\frac{c_K \Gamma_\infty(\omega)}{u^{2d_h}}\right)}p_s(du)
$$
where $\Gamma_\infty(\omega)=\sup_{N\ge 1} \Gamma_N (\omega)<+\infty$.  
Note that  for any ${u \in (0,1/\textup{e}]}$
$$
0\le16 A d_2\sqrt{ \ln\left(1+\frac{c_K \Gamma_\infty} {u^{2d_h}}\right)}\le C \sqrt{|\ln u|}
$$
with $C$ a finite positive random variable which does not depend on $u$.  Hence, for any $x,y$ such that $d(x,y)\le 1/\textup{e}$,
$$
  \sup_{N\ge 1} \left|Y_{s,\alpha,N}(x,\omega)-Y_{s,\alpha,N}(y,\omega)\right|\le C(\omega)\int_0^{d(x,y)}\sqrt{|\ln u|}\, p_s(du). 
$$
Moreover, denoting by $c$ a  constant which value may change at each line   and applying two integrations by parts,  for any $r\in (0,h_s]\subset (0,1/\textup{e}]$, we have: 
$$
\begin{array}{rcl}
\displaystyle \int_0^{r}\sqrt{|\ln u|}\, p_s(du)& \le & \displaystyle c r^{\eta_s}\left(-\ln r\right)^{\beta_s+\frac{1}{2}}+c r^{\eta_s}\left(-\ln r\right)^{\beta_s-\frac{1}{2}}+c \int_{0}^r u^{\eta_s-1}\left(-\ln u\right)^{\beta_s-\frac{3}{2}} du \\[10pt]
&\le &\displaystyle  c r^{\eta_s}\left(-\ln r\right)^{\beta_s+\frac{1}{2}}+ c \int_{0}^r u^{\eta_s-1} du \\[10pt]
&\le & \displaystyle c r^{\eta_s}\left(-\ln r\right)^{\beta_s+\frac{1}{2}}+c r^{\eta_s} 
\\[10pt]
&\le &\displaystyle c r^{\eta_s}\left(-\ln r\right)^{\beta_s+\frac{1}{2}}
\end{array}
$$
since $\eta_s>0$ and $\beta_s-3/2<0$. 
Hence for $x,y\in K$ such that $d(x,y)\le h_s$,  up to change the value of~$C$, 
$$
\displaystyle  \sup_{N\ge 1} \left|Y_{s,\alpha,N}(x,\omega)-Y_{s,\alpha,N}(y,\omega)\right|\le \displaystyle  C(\omega) d(x,y)^{\eta_s}(-\ln d(x,y))^{\beta_s+\frac{1}{2}}, 
$$
which concludes the proof of Assertion 1.
\item   Since $K$ is a separable set, there exists a dense countable set $\mathcal{D}\subset K$.  We now follow the fourth step of the proof of Theorem 3.1 in \cite{BL-2015-Bernoulli}. Let 
$$
\Omega'=\tilde{\Omega}\cap\bigcap_{x\in \mathcal{D}} \left\{\lim_{N\to +\infty} Y_{s,\alpha,N}=Y_{s,\alpha} \right\}.
$$
By Proposition \ref{FSM_SeriesRepresentation} and  since $\mathcal{D}$ is countable,  
$\mathbb{P}\left(\Omega'\right)=\mathbb{P}\left(\tilde{\Omega}\right)=1.
$
Moreover, for each $\omega \in \Omega'$, according to the proof of Assertion 1, the real-valued sequence $(Y_{s,\alpha,N}(\cdot, \omega))_{N\ge 1}$ satisfies \eqref{ControlPartialSums2} and then is uniformly equicontinuous on $K$ since $\eta_s>0$.  Then, by Theorem I.26 and by generalizing Theorem~I.27 of \cite{Reed} to the compact set $K$, for each  $\omega \in \Omega'$, $(Y_{s,\alpha,N}(\cdot, \omega))_{N\ge 1}$ converges uniformly on~$K$ toward its limits $Y_{s,\alpha}(\cdot, \omega)$. Moreover, Equation \eqref{ControlPartialSums2} is fulfilled for any $\omega\in \Omega'$, which leads to Equation \eqref{ControlY} letting $N\to +\infty$  and concludes the proof. 
\end{enumerate}
\end{proof}
Let us now prove Theorem \ref{SPR}. 

\begin{proof}[Proof of Theorem \ref{SPR}]\hfill
\begin{enumerate}
\item Let us assume that $s>d_h/d_w$. We follow the same lines as the end of the proof of Proposition 5.1 in \cite{BL-2015-Bernoulli}. Let us consider  a dense countable set $\mathcal{D}\subset K$ and 
$ \Omega'':={\left\{ \widetilde{C}<\infty \right\}}$ with$$
\widetilde{C}:=\sup_{\underset{0<d(x,y)\le h_s}{x,y\in D}} \frac{\left| \widetilde{X}_{s,\alpha}(x)- \widetilde{X}_{s,\alpha}(y)\right|}{ d(x,y)^{\eta_s}\left(-\ln d(x,y)\right)^{\beta_s+\frac{1}{2}}} . 
$$
Then, since $ \widetilde{X}_{s,\alpha}$ and $Y_{s,\alpha}$ have the same finite distributions, and since $Y_{s,\alpha}$ satisfies \eqref{ControlY},  
$
\mathbb{P}\left(  \Omega''\right)=1
$ 
and $ \widetilde{X}_{s,\alpha}$ is stochastically continuous. 

Let us now define a modification $\widetilde{X}_{s,\alpha}^*$ of $\widetilde{X}_{s,\alpha}$ with Hölder sample paths.  First, for $\omega \notin \Omega''$, let $\widetilde{X}_{s,\alpha}^*(u,\omega)=0$ for any $u\in K$. Then let us consider $\omega \in \Omega''$. We set 
$$
\forall u\in\mathcal{D}, \ \widetilde{X}_{s,\alpha}^*(u,\omega)=\widetilde{X}_{s,\alpha}(u,\omega)  .
$$
Then, by definition of $\widetilde{C}$, for any $x,y\in \mathcal{D}$ such that $d(x,y)\le h_s$, 
\begin{equation}
\label{controlD}
\left| \widetilde{X}^*_{s,\alpha}(x,\omega)-\widetilde{X}^*_{s,\alpha}(y,\omega)\right| \le \widetilde{C}(\omega) d(x,y)^{\eta_s}\left(-\ln d(x,y)\right)^{\beta_s+\frac{1}{2}}. 
\end{equation}

Let us now consider $u\in K$ and $(u_n)_{n\in \mathbb N}$ be a sequence of $\mathcal D$ such that $\lim_{n\to +\infty} u_n=u$. 

Since $\widetilde{C}(\omega)<+\infty,$ Equation \eqref{controlD} implies that
the sequence $\left(\widetilde{X}_{s,\alpha}^*(u_n,\omega)\right)_{n\in \mathbb N}$ is a  real-valued Cauchy sequence and then converges.  Note also that the limits does not depend on  choice of $(u_n)_n$, so  that we can define $ \widetilde{X}^*_{s,\alpha}(u,\omega)$ by 
$$
\widetilde{X}^*_{s,\alpha}(u,\omega)=\lim_{n\to +\infty} \widetilde{X}_{s,\alpha}\left(u_n,\omega\right). 
$$
Then,  by Equation \eqref{controlD}, by definition of $\widetilde{X}_{s,\alpha}^*$ and continuity of the distance $d$, 
$$
\left| \widetilde{X}^*_{s,\alpha}(x,\omega)-\widetilde{X}^*_{s,\alpha}(y,\omega)\right|\le \widetilde{C}(\omega) d(x,y)^{\eta_s}\left(-\ln d(x,y)\right)^{\beta_s+\frac{1}{2}} 
$$
for any $x,y\in K$ such that $d(x,y)\le h_s$.  This inequality also holds for $\omega \notin \Omega ''$ up to replace $\widetilde{C}(\omega)$ by $0$, so that $\widetilde{X}_{s,\alpha}^*$ satisfies Equation \eqref{ModContX}. Moreover, by stochastic continuity of $\widetilde{X}_{s,\alpha}$ and since $\mathbb{P}(\Omega'')=1$,  the random field $\widetilde{X}_{s,\alpha}^*$ is a modification of $\widetilde{X}_{s,\alpha}$, which concludes the proof of Assertion 1.

\item Let us now assume that $s\le d_h/d_w$ and consider $\mathcal{D}\subset K$ a countable dense subset of $K$. Then, by the lower bounds in Lemma \ref{Control-G}, for any $y\in K\backslash\mathcal{D}$
$$
\sup_{x\in\mathcal{D}} \left|G_s(x,y)\right|=+\infty
$$
so that, since $\mu\left(K\backslash\mathcal{D}\right)>0$, 
$$
\int_{K}\sup_{x\in\mathcal{D}} \left|G_s(x,y)\right|^\alpha \mu(dy)=+\infty.
$$
Therefore, according to Theorem 10.2.3 in \cite{taqqu}, a.s. the sample paths of $\widetilde{X}_{s,\alpha}$ are unbounded on $K$.  
\end{enumerate}
\end{proof}

Finally, when $\alpha <1$, the upper bound \eqref{ModContX} of the modulus of continuity stated in Theorem \ref{SPR} for $s>\frac{d_h}{d_w}$ can be improved:   the $\frac{1}{2}$ in the logarithm term can be removed. Actually, since $\alpha<1$, the series \eqref{Field-Y-Series} converges absolutely almost surely and then the increments $Y_{s,\alpha}(x)-Y_{s,\alpha}(y)$ can be directly dominated, without controlling the conditional  variance $s_\alpha(x,y)^2$. It is then unnecessary to apply the Garsia-Rodemich-Rumsey inequality, which  gives the $(-\ln d(x,y))^{\frac{1}{2}}$ term in the upper bound \eqref{ModContX}. 
\begin{theorem}
    \label{SPR alpha >1}
 Let $\alpha\in (0,1)$  and $s>\frac{d_h}{d_w}$. Let  $\widetilde{X}_{s,\alpha}$ be the Neumann fractional $\alpha$-stable field, and  $\eta_s$ and $\beta_s$ be defined as in Theorem \ref{SPR}.  Then there exists  a  the modification $\widetilde{X}^*_{s,\alpha}$ of  $\widetilde{X}_{s,\alpha}$ such that 
  $$
\lim_{\delta \to 0}\underset{\underset{x,y\in K}{\scriptstyle 0<d(x,y)}\le \delta}{\sup}\  \frac{\left| \widetilde{X}_{s,\alpha}^*(x)-\widetilde{X}_{s,\alpha}^*(y)\right|}{d(x,y)^{\eta_s} |\ln d(x,y)|^{\beta_s }}<\infty.
	$$
\end{theorem}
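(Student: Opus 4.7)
The plan follows the hint given just before the statement: when $\alpha<1$, the LePage series from Proposition \ref{FSM_SeriesRepresentation} converges absolutely almost surely, so we can dominate $|Y_{s,\alpha}(x)-Y_{s,\alpha}(y)|$ term-by-term. This bypasses the Garsia-Rodemich-Rumsey step which was responsible for the extra $|\ln d(x,y)|^{1/2}$ factor in Theorem \ref{SPR}.

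First I would introduce the non-negative random variable
$$
M = D_\alpha \sum_{n=1}^{+\infty} T_n^{-1/\alpha}\,|g_n|
$$
and show $M<+\infty$ almost surely. Since $T_n$ has a Gamma$(n,1)$ distribution, for $n>1/\alpha$ one has $\mathbb{E}(T_n^{-1/\alpha})=\Gamma(n-1/\alpha)/\Gamma(n)$, which by Stirling behaves like $n^{-1/\alpha}$ as $n\to+\infty$. Because $\alpha\in(0,1)$ gives $1/\alpha>1$, and because $(T_n)_{n\ge1}$ and $(g_n)_{n\ge1}$ are independent with $\mathbb{E}|g_1|<\infty$, Tonelli yields
$$
\mathbb{E}(M) = D_\alpha\,\mathbb{E}|g_1|\sum_{n=1}^{+\infty}\mathbb{E}(T_n^{-1/\alpha})<+\infty,
$$
so $M<+\infty$ a.s.

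Next, I would apply Theorem \ref{Holder-Riesz} uniformly in the second argument: there exists a deterministic constant $C_0>0$, depending only on $s$, such that
$$
|G_s(x,z)-G_s(y,z)| \le C_0\, d(x,y)^{\eta_s}\,\max\!\bigl(|\ln d(x,y)|^{\beta_s},1\bigr)
$$
for every $x,y,z\in K$. Since the bound is uniform in $z$, substituting $z=\xi_n$ and summing absolutely gives, almost surely and for every $x,y\in K$,
$$
|Y_{s,\alpha}(x)-Y_{s,\alpha}(y)| \le D_\alpha\sum_{n=1}^{+\infty} T_n^{-1/\alpha}\,|G_s(x,\xi_n)-G_s(y,\xi_n)|\,|g_n| \le C_0\,M\, d(x,y)^{\eta_s}\,\max\!\bigl(|\ln d(x,y)|^{\beta_s},1\bigr).
$$
In particular $Y_{s,\alpha}$ has Hölder sample paths on $K$ with the desired modulus.

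Finally, I would transfer this bound from $Y_{s,\alpha}$ to $\widetilde{X}_{s,\alpha}$ and construct the modification $\widetilde{X}_{s,\alpha}^*$ exactly as at the end of the proof of Theorem \ref{SPR}: pick a countable dense set $\mathcal{D}\subset K$, define
$$
\widetilde{C} = \sup_{\underset{0<d(x,y)\le 1/\textup{e}}{x,y\in\mathcal{D}}} \frac{|\widetilde{X}_{s,\alpha}(x)-\widetilde{X}_{s,\alpha}(y)|}{d(x,y)^{\eta_s}\,\max(|\ln d(x,y)|^{\beta_s},1)},
$$
use equality of finite-dimensional distributions with $Y_{s,\alpha}$ to conclude $\widetilde{C}<+\infty$ a.s., and extend $\widetilde{X}_{s,\alpha}$ by uniform continuity from $\mathcal{D}$ to the whole compact $K$. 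Stochastic continuity of $\widetilde{X}_{s,\alpha}$ on $\mathcal{D}$ ensures the extension is a modification.

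The main conceptual step is Step 1, i.e.\ verifying $\mathbb{E}(M)<+\infty$; once this is done, the rest is routine and is in fact strictly simpler than the proof of Theorem \ref{SPR} because no control on the conditional Gaussian variance and no entropy/GRR argument are needed. The only point requiring mild care is the case $n\le1/\alpha$ in the moment estimate of $T_n^{-1/\alpha}$, which can be handled by noting that finitely many such terms contribute a finite random amount (since $T_n>0$ a.s.\ and is integrable enough for the first few indices after a trivial truncation).
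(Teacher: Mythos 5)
Your proposal is correct and follows essentially the same route as the paper: term-by-term domination of the absolutely convergent LePage series via the uniform H\"older bound of Theorem \ref{Holder-Riesz}, followed by the same modification construction as in Theorem \ref{SPR}; the only difference is that the paper establishes the a.s.\ finiteness of $\sum_{n\ge 1} T_n^{-1/\alpha}|g_n|$ by citing Theorem 1.4.5 of \cite{taqqu} (which also identifies the limit as an $\alpha$-stable variable), whereas you give a direct first-moment computation. Just note that your displayed identity $\mathbb{E}(M)=D_\alpha\,\mathbb{E}|g_1|\sum_n \mathbb{E}\left(T_n^{-1/\alpha}\right)<+\infty$ is not literally valid since $\mathbb{E}\left(T_n^{-1/\alpha}\right)=+\infty$ for $n\le 1/\alpha$; the truncation you mention at the end (finitely many a.s.\ finite terms plus a tail with summable expectations) is genuinely needed and should be stated up front rather than as an afterthought.
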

\begin{proof} For any $x,y\in K$, 
$$
\left|Y_{s,\alpha}(x)-Y_{s,\alpha}(y)\right|\le D_\alpha\sum_{n=1}^{+\infty}T_n^{-1/\alpha} \left| G_s(x,\xi_n)-G_s(y,\xi_n)\right|\left|g_n \right| 
$$
and then by Theorem \ref{Holder-Riesz}, there exists a constant $c>0$ such that for any $x,y\in K$
$$
\left|Y_{s,\alpha}(x)-Y_{s,\alpha}(y)\right| \le c  d(x,y)^{\eta_s} \max\left(\left| \ln d(x,y)\right|^{\beta_s},1\right) \sum_{n=1}^{+\infty}T_n^{-1/\alpha} \left|g_n \right|.
$$
In addition, since $0<\alpha< 1$ and since Assumption \ref{AssumptionLePage} is fulfilled, by Theorem 1.4.5 in \cite{taqqu}, the series 
$$
A=c\sum_{n=1}^{+\infty}T_n^{-1/\alpha} \left|g_n \right|
$$
converges almost surely to an $\alpha$-stable random variable, and in particular $A<+\infty$ almost surely. Then the random field $Y_{s,\alpha}$ satisfies \eqref{ControlY} with $C=A$.  Hence Theorem \ref{SPR alpha >1} follows from the proof of Theorem \ref{SPR} (replacing $\beta_s+\frac{1}{2}$ by $\beta_s$). 
\end{proof}

\subsection{Symmetries and self-similarity properties}\label{invariance fields}

\subsubsection{Invariance by reflections}
Let  $\sigma_0,\sigma_1,\sigma_2$ be the reflections about the lines dividing the triangle with vertices $q_0,q_1,q_2$ into two equal parts. The Sierpi\'nski gasket is invariant by these reflections, and so is its Hausdorff measure $\mu$.  This leads to invariance of the random measure $W_{K,\alpha}$ and then of the fractional stable random fields ${X}_{s,\alpha}$ and $\widetilde{X}_{s,\alpha}$. Such properties have already been established in \cite{BL22-FGF}, studying the covariance function when $\alpha=2$. 

\begin{proposition} Let $\alpha\in (0,2]$ and $s>0$. \hfill

\begin{enumerate}
\item For any $f\in L^\alpha(K,\mu)$,  for every $i\in\{0,1,2\}$,  $f\circ \sigma_i\in L^\alpha(K,\mu)$ and 
$$
W_{K,\alpha}(f\circ \sigma_i)\stackrel{(d)}{=}W_{K,\alpha}(f).
$$
\item As a consequence,  for every $i\in\{0,1,2\}$, and every $s>0$
$$
\left({X}_{s,\alpha}\left(f\circ\sigma_i(x)\right)\right)_{f\in H^{-s}(K)}\stackrel{(fdd)}{=}\left({X}_{s,\alpha}\left(f\right)\right)_{f\in H^{-s}(K)}. 
$$
\item Moreover, if $s>\max\left(\frac{(\alpha-1)d_h}{\alpha d_w},0\right)$, then for every $i\in\{0,1,2\}$, 
$$
\left(\widetilde{X}_{s,\alpha}\left(\sigma_i(x)\right)\right)_{x\in K}\stackrel{(fdd)}{=}\left(\widetilde{X}_{s,\alpha}\left(x\right)\right)_{x\in K}. 
$$
\end{enumerate}
\end{proposition}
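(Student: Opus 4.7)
The plan is to deduce the three assertions in cascade, each one resting on the invariance of one piece of the construction under the reflections $\sigma_i$: the measure $\mu$ (for Assertion 1), the Dirichlet form $\mathcal{E}$ and hence $(-\Delta)^{-s}$ (for Assertion 2), and the heat kernel $p_t$ and hence $G_s$ (for Assertion 3). Throughout, the multivariate equalities in distribution will follow from the one-dimensional ones by linearity together with the fact that a symmetric $\alpha$-stable vector is characterized by the characteristic functions of its linear combinations.

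For Assertion 1, I would first record that $\sigma_i:K\to K$ is a continuous involution preserving the normalized Hausdorff measure $\mu$. This is immediate from the defining recursion $\mu(F_{i_1}\circ\cdots\circ F_{i_n}(K))=3^{-n}$ together with the fact that each $\sigma_i$ permutes the cells $F_{i_1}\circ\cdots\circ F_{i_n}(K)$. The change of variables formula then gives $\|f\circ\sigma_i\|_{L^\alpha(K,\mu)}=\|f\|_{L^\alpha(K,\mu)}$, so $f\circ\sigma_i\in L^\alpha(K,\mu)$ whenever $f$ is. To get joint equality in law, I take $f_1,\ldots,f_n\in L^\alpha(K,\mu)$ and $\lambda_1,\ldots,\lambda_n\in\mathbb{R}$, use linearity of the stochastic integral to write
$$\sum_{k=1}^n\lambda_k W_{K,\alpha}(f_k\circ\sigma_i)=W_{K,\alpha}\Bigl(\Bigl(\sum_{k=1}^n\lambda_k f_k\Bigr)\circ\sigma_i\Bigr),$$
and observe that the scale parameter of this symmetric $\alpha$-stable variable equals $\|\sum_k\lambda_k f_k\|_{L^\alpha(K,\mu)}$, which matches the scale parameter of $\sum_k\lambda_k W_{K,\alpha}(f_k)$.

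For Assertion 2, the key point is that $U_i:f\mapsto f\circ\sigma_i$ is unitary on $L^2(K,\mu)$ and commutes with $-\Delta$. This is because $\sigma_i$ permutes the cells $W_{\boldsymbol{i}}$ appearing in the discrete quadratic forms $\mathcal{E}_n$, hence $\mathcal{E}_n(f\circ\sigma_i,f\circ\sigma_i)=\mathcal{E}_n(f,f)$, and passing to the limit gives invariance of $\mathcal{E}$; the associated generator $\Delta$ therefore commutes with $U_i$. Since $U_i$ is unitary and commutes with $-\Delta$, it preserves each eigenspace $E_j$ and hence commutes with the functional calculus: $(-\Delta)^{-s}(f\circ\sigma_i)=((-\Delta)^{-s}f)\circ\sigma_i$. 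Then
$$X_{s,\alpha}(f\circ\sigma_i)=W_{K,\alpha}((-\Delta)^{-s}f\circ\sigma_i)\stackrel{(d)}{=}W_{K,\alpha}((-\Delta)^{-s}f)=X_{s,\alpha}(f)$$
by Assertion 1, and the finite-dimensional version follows by the same linearity argument.

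For Assertion 3, the invariance of $\mathcal{E}$ under $U_i$ implies that the heat semigroup $P_t=e^{t\Delta}$ intertwines with $U_i$, so $p_t(\sigma_i(x),\sigma_i(y))=p_t(x,y)$ for all $x,y,t$. Inserting this in the integral definition \eqref{green function} of $G_s$ yields $G_s(\sigma_i(x),\sigma_i(y))=G_s(x,y)$; since $\sigma_i$ is an involution, this is the same as $G_s(\sigma_i(x),y)=G_s(x,\sigma_i(y))$. Consequently, for any $x\in K$,
$$\widetilde{X}_{s,\alpha}(\sigma_i(x))=\int_K G_s(\sigma_i(x),y)\,W_{K,\alpha}(dy)=W_{K,\alpha}\bigl(G_s(x,\cdot)\circ\sigma_i\bigr),$$
and by Assertion 1 this equals in joint distribution $W_{K,\alpha}(G_s(x,\cdot))=\widetilde{X}_{s,\alpha}(x)$, taking linear combinations exactly as before.

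The main obstacle is the verification that $U_i$ commutes with $(-\Delta)^{-s}$ in Assertion 2; everything else is essentially bookkeeping once invariance of $\mu$, $\mathcal{E}$ and $p_t$ under $\sigma_i$ is available. Granted the well-known invariance of the canonical Dirichlet form on the gasket under the reflections (which is visible at each finite level), the commutation with the functional calculus is clean and the rest of the proof is immediate.
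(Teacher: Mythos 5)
Your proposal is correct and follows essentially the same route as the paper: invariance of $\mu$ under $\sigma_i$ gives Assertion 1, the commutation $(-\Delta)^{-s}(f\circ\sigma_i)=((-\Delta)^{-s}f)\circ\sigma_i$ gives Assertion 2, and the identity $G_s(\sigma_i(x),\sigma_i(y))=G_s(x,y)$ gives Assertion 3. The only cosmetic differences are that you obtain the commutation abstractly from unitarity of $f\mapsto f\circ\sigma_i$ and the functional calculus, where the paper computes explicitly in the eigenbasis $\Phi_j\circ\sigma_i$ (and records the density extension to $H^{-s}(K)$, which you should also state), and that you derive the kernel invariance from $p_t(\sigma_i(x),\sigma_i(y))=p_t(x,y)$ rather than citing it.
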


\begin{proof} \hfill

\begin{enumerate}
\item Let  $f\in L^\alpha(K,\mu)$ and $i\in \left\{0,1,2\right\}$. Note that  by the change of variables $y=\sigma_i(x)$, 
$$\int_K \left|f\circ\sigma_i(x)\right|^\alpha \, \mu(dx)=\int_K \left|f(y)\right|^\alpha \, \mu(dy) <+\infty 
$$
since $K$ and $\mu$ are invariant by $\sigma_i$. In particular, $f\circ\sigma_i \in L^\alpha(K,\mu)$. Moreover,   
$$
\left\|W_{K,\alpha}(f\circ \sigma_i)\right\|_{L^\alpha(K,\mu)}^\alpha=\int_K \left|f\circ\sigma_i(x)\right|^\alpha \, \mu(dx)=\int_K \left|f(y)\right|^\alpha \, \mu(dy)=\left\|W_{K,\alpha}(f)\right\|_{L^\alpha(K,\mu)}^\alpha. 
$$
Hence, the two   $S\alpha S$ random variables  $W_{K,\alpha}(f\circ \sigma_i)$ and $W_{K,\alpha}(f)$ have the same distribution, which concludes the proof of the first assertion. 
\item Let $i\in\{0,1,2\}.$  Since $\Phi_j\in\mathcal{F},$ $j\ge 1$ is an orthonormal basis of $L_0^2(K,\mu)$, using the invariance of $\mu$ by the continuous bijective function $\sigma_i$, one can check that $\Phi_j\circ \sigma_i \in\mathcal{F},$ $j\ge 1$, is also an orthonormal basis of $L_0^2(K,\mu)$.  Moreover, since the Dirichlet form $\mathcal{E}$ is invariant by $\sigma_i$, that is since 
$$
\mathcal{E}\left(f\circ \sigma_i,f\circ \sigma_i\right)=\mathcal{E}\left(f,f\right),
$$
we have: 
$$
\Delta \left(f\circ \sigma_i\right)= \left(\Delta f\right)\circ \sigma_i.
$$
and then 
$$
\Delta \left(\Phi_j\circ \sigma_i\right)= \left(\Delta \Phi_j\right)\circ \sigma_i=-\lambda_j \Phi_j\circ \sigma_i
$$
that is, as the function $\Phi_j$,  $\Phi_j\circ \sigma_i$ is an eigenfunction of $-\Delta$ associated to the eigenvalue $-\lambda_j$. By unicity of the fractional Laplacian $(-\Delta)^{-s}$, for any $g\in L_0^2(K,\mu)$, in the basis  $\Phi_j\circ \sigma_i \in\mathcal{F},$ $j\ge 1$, 
$$
(-\Delta)^{-s} g=\sum_{j=1}^{+\infty} \frac{1}{\lambda_j^s} \left(\int_K \Phi_j(\sigma_i(y)) g(y) \mu (dy)\right) \Phi_j\circ \sigma_i. 
$$ 

Hence, since for any $f\in L_0^2(K,\mu)$, $g=f\circ \sigma_i\in L_0^2(K,\mu)$ and we have:
$$
(-\Delta)^{-s} (f\circ \sigma_i)=\sum_{j=1}^{+\infty} \frac{1}{\lambda_j^s} \left(\int_K \Phi_j(\sigma_i(y)) \, (f\circ\sigma_i)(y) \mu (dy)\right)\Phi_j\circ \sigma_i.
$$
Then, applying the change of variable $x=\sigma_i(y)$ in the integrals and since $\mu$ is invariant by $\sigma_i$, we get: for any $f \in L_0^2(K,\mu)$,
\begin{equation}
\label{Laplacian-symmetry}
(-\Delta)^{-s} (f\circ \sigma_i)=\sum_{j=1}^{+\infty} \frac{1}{\lambda_j^s} \left(\int_K \Phi_j(x) f(y) \mu (dy)\right) \Phi_j\circ \sigma_i = \left((-\Delta)^{-s} f\right)\circ \sigma_i.
\end{equation}
Moreover from the definition of the Sobolev space $H^{-s}(K)$, it follows that for $f\in H^{-s}(K)$, $f\circ \sigma_i \in H^{-s}(K)$ and $\| f \|_{H^{-s}(K)}=\| f \circ \sigma_i \|_{H^{-s}(K)}$ and that Equation \eqref{Laplacian-symmetry} still holds (by density) for $f\in H^{-s}(K)$. Hence, for any $f\in H^{-s}(K)$, the two random variables $X_{s,\alpha}(f)$ and $ X_{s,\alpha}(f\circ \sigma_i)$ are well-defined   and we have: 
$$
X_{s,\alpha}\left(f \circ \sigma_i\right)=W_{K,\alpha}\left((-\Delta)^{-s}\left(f \circ \sigma_i\right)\right)=W_{K,\alpha}\left(\left((-\Delta)^{-s}f \right)\circ \sigma_i\right)
$$
and then, by Assertion 1., 
$$
X_{s,\alpha}\left(f \circ \sigma_i\right)\stackrel{(d)}{=} W_{K,\alpha}\left((-\Delta)^{-s}f \right),
$$
that is 
$$
X_{s,\alpha}\left(f \circ \sigma_i\right)\stackrel{(d)}{=} X_{s,\alpha}\left(f \right),
$$
which implies Assertion 2.  by linearity of $X_{s,\alpha}$ on $H^{-s}(K)$.

\item Let $s>\max\left(\frac{(\alpha-1)d_h}{\alpha d_w},0\right)$ and let $i\in \{0,1,2\}$. Then, for any $x\in K$, 
$$
\widetilde{X}_{s,\alpha}\left(\sigma_i(x)\right)=\int_{K} G_s\left(\sigma_i(x),y\right) W_{K,\alpha}(dy). 
$$
Hence, by linearity of the stochastic integrals with respect to  $W_{K,\alpha}$ and by Assertion 1, 
$$
\left(\widetilde{X}_{s,\alpha}\left(\sigma_i(x)\right)\right)_{x\in K}\stackrel{(fdd)}{=}  \left(  \int_{K} G_s\left(\sigma_i(x),\sigma_i(z)\right) W_{K,\alpha}(dz) \right)_{x\in K} .
$$
Moreover, according to the proof of Proposition 3.10 in \cite{BL22-FGF}, for any $x,z\in K$ such that $x\ne z$, 
\[
G_s\left(\sigma_i(x),\sigma_i(z)\right) =G_s(x,y).
\]
Therefore, 
$$
\left(\widetilde{X}_{s,\alpha}\left(\sigma_i(x)\right)\right)_{x\in K}\stackrel{(fdd)}{=}  \left(  \int_{K} G_s\left(x,z\right) W_{K,\alpha}(dz) \right)_{x\in K} ,
$$
which concludes the proof.
\end{enumerate}
\end{proof}
\subsubsection{Invariance by scaling}

Let $w=(i_1, \cdots, i_n) \in \{ 0,1,2 \}^n $, and denote $$F_w=F_{i_1} \circ \cdots \circ F_{i_n}$$ where we recall that 
\[
F_i(z)=\frac{1}{2}(z-q_i)+q_i.
\]
The compact set $K_w:=F_w(K) \subset K$ is itself a Sierpi\'nski gasket.  We equip $K_w$ with the restriction $\mu_{|_{K_w}}$ of the  Hausdorff measure $\mu$. Let us from now indicate with a superscript or subscript $w$ the objects related to the Sierpi\'nski
gasket $K_w$ (Dirichlet form $\mathcal{E}^w$, Laplacian $\Delta_w$, heat kernel $p^w_t(x,y)$, etc...). 

We now introduce the Neumann fractional $\alpha$-stable random field on the smaller Sierpi\'nski $K_w$ as previously done on~$K$. Let us first   note that the function $F_w:K\rightarrow K_w$ is continuous and bijective and that according to the proof of Proposition 3.10 in \cite{BL22-FGF},
$$
\mathcal{F}^w=\left\{f\circ F_w^{-1},\, f\in \mathcal{F}\right\}
$$
and for $f\in \mathcal{F}$, 
\begin{equation}
\label{Laplacian-self-similarity}
\Delta_w \left(f\circ F_w^{-1}\right)=5^n\left(\Delta f\right)\circ F_w^{-1} .
\end{equation}
Let us now consider the functions 
$$
\Phi_j^w=3^{\frac{n}{2}}\, \Phi_j\circ F_w^{-1} \in \mathcal{F}^w, \ j\ge 1. 
$$
Since  $\Phi_j$, $j\ge 1$ is an orthonormal basis of $L_0^2(K,\mu)$, from the self-similarity  of the Hausdorff measure~$\mu,$ it follows that $\Phi_j^w\in\mathcal{F}^w$, $j\ge 1$, is an orthonormal basis of $L_0^2\left(K_w,\mu\right)$.  Moreover, since  $\Delta \Phi_j=-\lambda_j \Phi_j$, by Equation \eqref{Laplacian-self-similarity}, 
  $$
  -\Delta_w \Phi_j^w=\lambda_j^w\Phi_j^w
  $$ with 
$\lambda_{j,w}=5^n \lambda_j$. Then,  the heat kernel admits the  spectral expansion: 
$$
p^w_t(x,y)=\frac{1}{\mu(K_w)}+\sum_{j=1}^{+\infty} \textup{e}^{-\lambda_{j,w}}\Phi_j^w(x)\Phi_j^w(y)=3^n+\sum_{j=1}^{+\infty} \textup{e}^{-\lambda_{j,w}}\,\Phi_j^w(x)\Phi_j^w(y). 
$$

We next  consider  the following space of test functions
\[
\mathcal{S}\left(K_w\right)= \left\{ f \in C_0\left(K_w\right),  \forall k \ge 0 \lim_{n \to +\infty} n^k \left| \int_{K_w}  \Phi_n^w(y) f(y) \mu(dy) \right| =0 \right\},
\] 
where
\[
C_0\left(K_w\right)=\left\{ f \in C\left(K_w\right), \int_{K_w} f d\mu=0 \right\}\subset L_0^2\left(K_w,\mu\right).
\]
We define then the Sobolev space $H^{-s}\left(K_w\right)$ as the completion of $\mathcal{S}\left(K_w\right)$ with respect to the norm $\| f \|_{H^{-s}(K_w)}=\|(-\Delta)^{-s} f \|_{L^2(K_w,\mu)}$. 
For $s\ge 0$,  we then consider $X_{s,\alpha}^w$  the fractional stable field defined on {$H^{-s}\left(K_w\right)$} by 
$$
X_{s,\alpha}^w(f)=\int_{K_w} \left(-\Delta_w\right)^{-s} f(z) \, W_{K_w,\alpha}(dz), \quad f\in H^{-s}\left(K_w\right)
$$
where $W_{K_w,\alpha}$ is a real-valued symmetric $\alpha$-stable random measure with control measure  $\mu_{|_{K_w}}$.   Note that the volume of $K_w$ is $\mu(K_w)=3^{-n}$, which  leads to  define the Riesz kernel by 
$$
G_s^w(x,y)=\int_{0}^{+\infty} t^{s-1} \left(p_{t}^w(x,y)-\frac{1}{\mu(K_w))}\right) dt=\int_{0}^{+\infty} t^{s-1} \left(p_{t}^w(x,y)-3^n\right) dt, \quad x,y\in K_w, \, x\ne y.
$$
Let us mention that there is a typo in the proof of Proposition 3.10 in\cite{BL22-FGF}, since the constant $3^n$ is missing in the definition of $G_s^w$. 

Finally, for $s>\max\left(\frac{(\alpha-1)d_h}{\alpha d_w},0\right)$, we also consider 
$$
\widetilde{X}_{s,\alpha}^w(x)=\int_{K_w} G_s^w(x,y) \, W_{K_w,\alpha}(dy), \quad x\in K_w
$$
and Theorem \ref{Existence-Density} still holds: that is $\widetilde{X}_{s,\alpha}^w$ is the density of $X_{s,\alpha}^w$ with respect to the measure $\mu_{|_{K_w}}$. Next Theorem, which generalizes the self-similarity property established in the Gaussian framework in \cite{BL22-FGF}, states that up to a normalization and a scaling, we recover the Neumann fractional $\alpha$-stable random field defined on the standard Sierpi\'nski gasket $K$.

\begin{proposition} Let $\alpha\in (0,2]$, $s>0$ and $H=sd_w-\frac{(\alpha-1)d_h}{\alpha}.$ 

\begin{enumerate}
\item For any $f\in L^\alpha\left(K_w,\mu_{|_{K_w}}\right)$,  then  $f\circ F_w\in L^\alpha(K,\mu)$ and 
$$
3^{n/\alpha} \,W_{K_w,\alpha}(f)\stackrel{(d)}{=}W_{K,\alpha}(f\circ F_w).
$$
 
\item As a consequence, 
$ \displaystyle
2^{nH}3^n\left({X}^w_{s,\alpha}\left(f\right)\right)_{f\in H^{-s}\left(K_w\right)}\stackrel{(fdd)}{=}\left({X}_{s,\alpha}\left(f\circ  F_w\right)\right)_{f\in H^{-s}\left(K_w\right)}. 
$ 

\item Moreover, for $s>\max\left(\frac{(\alpha-1)d_h}{\alpha d_w},0\right)$,  $(2^{nH} \widetilde{X}_{s,\alpha}^w( F_w (x)))_{x \in K}$ is the Neumann  fractional $\alpha$-stable  field   defined on $K$ by \eqref{dens def}, that is 
$$
2^{nH}(\widetilde{X}_{s,\alpha}^w( F_w (x)))_{x \in K}\stackrel{(fdd)}{=} (\widetilde{X}_{s,\alpha}( x))_{x \in K}.
$$

\end{enumerate}
\end{proposition}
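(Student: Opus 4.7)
The plan proceeds sequentially through the three assertions, building each on the previous. The unifying tool is the self-similarity of the Hausdorff measure, $\mu(F_w(A)) = 3^{-n} \mu(A)$ for Borel $A \subset K$, equivalent to the change of variable identity $\int_K (h \circ F_w)\, d\mu = 3^n \int_{K_w} h\, d\mu_{|_{K_w}}$ for $h$ measurable on $K_w$. Applied to $h = |f|^\alpha$ this yields $\|f \circ F_w\|^\alpha_{L^\alpha(K,\mu)} = 3^n \|f\|^\alpha_{L^\alpha(K_w,\mu)}$, so $f \circ F_w \in L^\alpha(K, \mu)$, and the two symmetric $\alpha$-stable random variables $W_{K,\alpha}(f \circ F_w)$ and $3^{n/\alpha} W_{K_w,\alpha}(f)$ have the same scale parameter and hence the same characteristic function. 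Testing against arbitrary linear combinations $\sum_k u_k f_k$ upgrades this to equality of finite-dimensional distributions, giving Assertion 1. The same linear-combination trick will also deliver fdd equality in Assertions 2 and 3.

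For Assertion 2, I would first establish the intertwining relation
\[
(-\Delta)^{-s}(f \circ F_w) = 5^{ns}\bigl((-\Delta_w)^{-s} f\bigr) \circ F_w, \qquad f \in L^2_0(K_w, \mu),
\]
by expanding $f = \sum_j a_j \Phi_j^w$ in the orthonormal basis of $L^2_0(K_w, \mu)$ and plugging the identities $\Phi_j^w \circ F_w = 3^{n/2} \Phi_j$ and $\lambda_{j,w} = 5^n \lambda_j$ into the spectral definition \eqref{def laplace fraction}. Extending by density to $f \in H^{-s}(K_w)$ and applying $W_{K,\alpha}$ yields
\[
X_{s,\alpha}(f \circ F_w) = 5^{ns}\, W_{K,\alpha}\bigl(((-\Delta_w)^{-s} f) \circ F_w\bigr) \stackrel{(d)}{=} 5^{ns}\, 3^{n/\alpha}\, X^w_{s,\alpha}(f),
\]
where the equality in distribution uses Assertion 1. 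Since $5 = 2^{d_w}$, $3 = 2^{d_h}$, and $H = s d_w - (\alpha-1) d_h/\alpha$, the constant rewrites as $5^{ns} 3^{n/\alpha} = 2^{n(s d_w + d_h/\alpha)} = 2^{nH} \cdot 3^n$, matching the claim.

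For Assertion 3, the spectral expansion of $p_t^w$ combined with $\Phi_j^w = 3^{n/2} \Phi_j \circ F_w^{-1}$ and $\lambda_{j,w} = 5^n \lambda_j$ gives
\[
p_t^w(F_w(x), F_w(y)) - 3^n = 3^n \bigl(p_{5^n t}(x, y) - 1\bigr), \qquad x, y \in K.
\]
Multiplying by $t^{s-1}/\Gamma(s)$, integrating in $t$ and substituting $u = 5^n t$ produces the kernel identity $G_s^w(F_w(x), F_w(y)) = 3^n\, 5^{-ns}\, G_s(x, y)$ for $x \neq y$ in $K$. Plugging this into \eqref{dens def} written for $K_w$, I then apply Assertion 1 to the function $g_x(z) := G_s(x, F_w^{-1}(z))$ on $K_w$, whose pullback $g_x \circ F_w$ is precisely $G_s(x, \cdot)$ on $K$, so that $3^{n/\alpha}\, W_{K_w,\alpha}(g_x) \stackrel{(d)}{=} \widetilde{X}_{s,\alpha}(x)$. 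Collecting the constants $3^n \cdot 5^{-ns} \cdot 3^{-n/\alpha} = 2^{-nH}$ produces $\widetilde{X}^w_{s,\alpha}(F_w(x)) \stackrel{(fdd)}{=} 2^{-nH} \widetilde{X}_{s,\alpha}(x)$ (the fdd extension comes again from testing against linear combinations), which is the claimed identity.

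The main technical obstacle is to establish the intertwining relation for $(-\Delta)^{-s}$ and the kernel identity for $G_s^w$; once these are in hand the rest is careful bookkeeping of the factors $3^n$ (volume of $K_w$) and $5^n$ (time rescaling of the heat semigroup, i.e.\ the walk dimension), which combine cleanly into $2^{nH}$ via $5 = 2^{d_w}$ and $3 = 2^{d_h}$.
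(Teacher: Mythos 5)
Your proposal is correct and follows essentially the same route as the paper: the change-of-variables/scale-parameter computation for Assertion 1, the spectral intertwining $(-\Delta)^{-s}(f\circ F_w)=5^{ns}\left((-\Delta_w)^{-s}f\right)\circ F_w$ for Assertion 2, and the kernel identity $G_s^w(F_w(x),F_w(y))=3^n5^{-ns}G_s(x,y)$ for Assertion 3, with the exponents combining into $2^{nH}$ exactly as you compute. The only deviation is that you re-derive the kernel scaling identity from the spectral expansion of $p_t^w$ (a correct and self-contained argument), whereas the paper simply cites the proof of Proposition 3.10 in \cite{BL22-FGF}.
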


\begin{remark} By analogy with the Fractional Brownian fields of index $H$, in view of the last proposition,  the random fields defined in Definition \ref{def fractional field} as a distribution or pointwise by  \eqref{dens def} are named Neumann fractional $\alpha$-stable random fields on $K$ of index $H=sd_w-\frac{(\alpha-1)d_h}{\alpha}$.   
\end{remark}

\begin{proof} \hfill

\begin{enumerate}
\item Let $f\in L^\alpha\left(K_w,\mu_{|_{K_w}}\right)$. Then, 
$$
\left\|W_{K_w,\alpha}(f)\right\|_{L^\alpha(K_w,\mu)}^{\alpha}=\int_{K_w}\left|f(y)\right|^{\alpha} \mu(dy)
$$
so that, by self-similarity of the Hausdorff measure $\mu$, 
$$
\left\|W_{K_w,\alpha}(f)\right\|_{L^\alpha(K_w,\mu)}^{\alpha}=\frac{1}{3^n} \int_{K}\left|f(F_w(z))\right|^{\alpha} \mu(dz).
$$
In particular, $f\circ F_w \in  L^\alpha(K,\mu)$, so that the stable random variable $W_{K,\alpha}(f\circ F_w)$ is well-defined. Moreover, 
$$
\left\|W_{K,\alpha}(f\circ F_\omega)\right\|_{L^\alpha(K_w,\mu)}^{\alpha}=\int_{K}\left|f(F_w(z))\right|^{\alpha} \mu(dz)=\left\|3^{n/\alpha} \,W_{K_w,\alpha}(f)\right\|_{L^\alpha(K,\mu)}^{\alpha}
$$
so that the two $S\alpha S$ random variables $W_{K,\alpha}(f\circ F_\omega)$ and $3^{n/\alpha} \,W_{K_w,\alpha}(f)$ have the same distribution, which establishes Assertion 1.
\item Let $f\in L^2_0\left(K_w,\mu_{|_{K_w}}\right)$. Then,   $f\circ F_w\in L^2_0(K,\mu)$  and 
$$\begin{array}{rcl}
\displaystyle \left(-\Delta\right)^{-s} \left(f\circ F_w\right) &=&\displaystyle \sum_{j=1}^{+\infty} \frac{1}{\lambda_j^s} \left(\int_{K}\Phi_j(y) f\left(F_w(y)\right)\mu (dy)\right) \Phi_j \\[15pt]
&=& \displaystyle 
5^{ns}3^{-n} \sum_{j=1}^{+\infty} \frac{1}{\lambda_{j,w}^s}\left( \int_{K}\Phi_j^w\left(F_w(y)\right) f\left(F_w(y)\right)\mu (dy)\right) \Phi_j^w\circ F_w.
\end{array}
$$
Hence, applying the change of variable $x=F_w(y)$ in the integrals and using the self-similarity of $\mu$, we get: 
$$\begin{array}{rcl}
\displaystyle \left(-\Delta\right)^{-s} \left(f\circ F_w\right) 
&=& \displaystyle 
5^{ns} \sum_{j=1}^{+\infty} \frac{1}{\lambda_{j,w}^s}\left( \int_{K_w}\Phi_j^w\left(x\right) f\left(x\right)\mu (dy)\right) \Phi_j^w\circ F_w
\\[20pt]
&=& \displaystyle  5^{ns} \left(\left(-\Delta_w\right)^{-s}f\right) \circ F_w.
\end{array}
$$
By definition of $H^{-s}(K)$, one deduces that if $f \in H^{-s}(K_w)$, then $f\circ F_w \in H^{-s}(K)$ and $$\left(-\Delta\right)^{-s} \left(f\circ F_w\right)=5^{ns} \left(\left(-\Delta_w\right)^{-s}f\right) \circ F_w.$$
Hence, for $f\in H^{-s}(K_w)$, 
$$
X_{s,\alpha}\left(f\circ F_w\right)=W_{K,\alpha}\left(\left(-\Delta\right)^{-s} \left(f\circ F_w\right)\right)=5^{ns}W_{K,\alpha}\left(\left(\left(-\Delta_w\right)^{-s}f\right) \circ F_w\right)
$$
and then by Assertion 1., 
$$
X_{s,\alpha}\left(f\circ F_w\right)\stackrel{(d)}{=} 5^{ns}3^{n/\alpha}W_{K_w,\alpha}\left(\left(-\Delta_w\right)^{-s}f\right)
$$
that is 
$$
X_{s,\alpha}\left(f\circ F_w\right)\stackrel{(d)}{=} 5^{ns}3^{n/\alpha}X_{s,\alpha}^w\left(f\right).
$$
Moreover, 
since $s=\frac{H}{d_w}+\frac{(\alpha-1)d_h}{\alpha d_w}= \frac{H\ln 2}{\ln 5}+\left(1-\frac{1}{\alpha}\right)\frac{\ln 3}{\ln 5},$
$$
5^{ns}=2^{nH}3^{n-n/\alpha}
$$
and so 
$$
X_{s,\alpha}\left(f\circ F_w\right)\stackrel{(d)}{=} 2^{nH}3^{n}X_{s,\alpha}^w\left(f\right)
.
$$
Assertion 2. then follows by linearity of $X_{s,\alpha}$ and of $X_{s,\alpha}^w$.
\item  For any $x\in K$, 
$$
\widetilde{X}_{s,\alpha}^w\left(F_w(x)\right)=\int_{K_w} G_s^w\left(F_w(x),y\right) W_{K_w,\alpha}(dy). 
$$
Hence, by linearity of the stochastic integrals with respect to $W_{K_w,\alpha}$ and $W_{K,\alpha}$ and by Assertion 1, 
$$3^{n/\alpha}
\left(\widetilde{X}_{s,\alpha}^w\left(F_w(x)\right)\right)_{x\in K}\stackrel{(fdd)}{=}  \left(  \int_{K} G_s^w\left(F_w(x),F_w(z)\right) W_{K,\alpha}(dz) \right)_{x\in K} .
$$
Moreover, according to the proof of Proposition 3.10 in \cite{BL22-FGF}, for any $x,y\in K$ such that $x\ne y$, 
\[
G_s^w(F_w(x),F_w(y)) =\frac{3^{n}}{5^{ns}} G_s(x,y).
\]
Therefore,   
$$
\frac{5^{ns}}{3^{n-n/\alpha}}\left(\widetilde{X}_{s,\alpha}^w\left(F_w(x)\right)\right)_{x\in K}\stackrel{(fdd)}{=}  \left(  \int_{K} G_s\left(x,z\right) W_{K,\alpha}(dz) \right)_{x\in K},
$$
that is 
$$
2^{nH}\left(\widetilde{X}_{s,\alpha}^w\left(F_w(x)\right)\right)_{x\in K}\stackrel{(fdd)}{=}  \left(  \widetilde{X}_{s,\alpha}(x)\right)_{x\in K},
$$
which concludes the proof.
\end{enumerate}
\end{proof}

\section{Dirichlet  Fractional Stable random fields }

The previous sections have focused on the construction of the fractional stable  random fields which were constructed using the Dirichlet form \eqref{dirichlet limit} and its associated Laplacian.
In this section we show how to perform a similar construction with Dirichlet boundary conditions at the 3 vertices of the Sierpi\'nski triangle. For fractional Gaussian fields this construction was done in \cite{BC22-DFGF}.

\subsection{Dirichlet fractional Riesz kernels}

Let us first recall the construction of the Laplacian with Dirichlet boundary conditions; we refer to \cite{BC22-DFGF} for further details. Let

\[
\mathcal F_D=\{f\in C(K):\lim_{m\to +\infty}\mathcal E_m(f,f)<\infty, f=0 \text{ on } V_0 \}.
\]

By Theorem 4.1 and Lemma 4.1 in \cite{FukushimaShima}, $(\mathcal E,\mathcal F_D)$ is a local regular Dirichlet form on $L^2(K,\mu)$. The generator of this Dirichlet form $\Delta_D$  is referred to as the Dirichlet Laplacian on $K$. 
 This Laplacian generates the Dirichlet heat semigroup $\{P_t^D\}_{t \ge 0}$ on $L^2(K,\mu)$ and the associated Dirichlet heat kernel  denoted by $p_t^D(x,y)$, for $t>0$ and $x,y \in K$, admits a uniformly convergent spectral expansion:
\begin{align}\label{spectralD}
p_t^D(x,y)=\sum_{j=1}^{+\infty} e^{-\lambda^D_j t} \Psi_j(x) \Psi_j(y)
\end{align}
where $0<\lambda^D_1\le \lambda^D_2\le  \cdots \le \lambda^D_j \le \cdots$ are the eigenvalues of $-\Delta_D$ and  $(\Psi_j)_{j\ge 1} \subset  \mathcal{D} (\Delta_D) \subset \mathcal F_D$ is an orthonormal basis of $L^2(K,\mu)$ such that  
\[
\Delta_D \Psi_j =-\lambda^D_j \Psi_j.
\]

From \cite{Lierl}, this heat kernel satisfies for some $c_{1},c_{2} \in(0,\infty)$,
\begin{equation}\label{eq:subGauss-upperD}
p_{t}^D(x,y)\leq c_{1} t^{-\frac{d_{h}}{d_{w}}}\!\exp\biggl(\!-c_{2}\Bigl(\frac{d(x,y)^{d_{w}}}{t}\Bigr)^{\frac{1}{d_{w}-1}}\!\biggr)
\end{equation}
for every \ $(x,y)\in K \times K$ and $t\in\bigl(0,+\infty)$ and  also satisfies the sub-Gaussian lower bound (see \cite{Lierl}):

\begin{equation}\label{eq:loweGauss-upperD}
p_{t}^D(x,y)\geq c_{1}\Psi_1(x) \Psi_1(y) t^{-\frac{d_{h}}{d_{w}}}\!\exp\biggl(\!-c_{2}\Bigl(\frac{d(x,y)^{d_{w}}}{t}\Bigr)^{\frac{1}{d_{w}-1}}\!\biggr)
\end{equation}
for every \ $(x,y)\in K \times K$ and $t\in\bigl(0,1)$ where  $\Psi_1$ is the first  eigenfunction of $-\Delta_D$. 

The main differences induced by the Dirichlet boundary conditions are the following:

\begin{itemize}
\item $\Psi_j(q)=0$ whenever $q \in V_0$;
    \item for all $x,y \in K$, $p_t^D(x,q)=p_t^D(q,y)=0$ whenever $q \in V_0$;
    \item uniformly on $K \times K$, $p_t^D(x,y) \to 0$ when $t \to +\infty$.
\end{itemize}

\begin{definition}
Let $s \ge 0$. For $f \in L^2(K,\mu)$, the Dirichlet fractional Laplacian $(-\Delta_D)^{-s}$ on $f$ is defined as
\[
(-\Delta_D)^{-s} f  =\sum_{j=1}^{+\infty} \frac{1}{(\lambda^D_j)^s} \Psi_j \int_K  \Psi_j(y) f(y) \mu(dy).
\]
\end{definition}

\begin{definition}
	For a parameter $s > 0$, we define the Dirichlet fractional Riesz kernel $G^D_s $ by 
	\begin{align}\label{green function Dirichlet}
	G^D_s(x,y)=\frac{1}{\Gamma(s)} \int_0^{+\infty} t^{s-1} p^D_t(x,y) dt, \quad x,y \in K, \, x\neq y.
	\end{align}
\end{definition}

For the Dirichlet boundary case, the analogue of the Schwartz space is given by
\[
\mathcal{S}_D(K)= \left\{ f \in C(K), f=0 \text{ on } V_0,   \forall k \ge 0 \lim_{n \to +\infty} n^k \left| \int_K  \Psi_n(y) f(y) \mu(dy) \right| =0 \right\}.
\]
We note that from \cite{BC22-DFGF}, for $s > 0$, $f \in \mathcal{S}_D(K)$, and $x \in K$
\begin{equation}\label{eq:Fractional Laplacian-Riesz Kernel Dirichlet}
 (-\Delta_D)^{-s} f (x) = \int_K G^D_{s}(x,y) f(y) \mu(dy).
\end{equation}

The Sobolev space $\mathcal{H}^{-s}_D (K)$ is defined as the completion of $\mathcal{S}_D(K)$ with respect to the inner product 
\[
\left\langle f ,g \right\rangle_{\mathcal{H}^{-s}_D (K)} =\int_K (-\Delta_D)^{-s} f (y) (-\Delta_D)^{-s} g (y) \mu (dy).
\]

The following result can be proved as in the case of the Neumann boundary condition and the details are let to the reader.

\begin{proposition}\label{estimate G Dirichlet}
	\
	\begin{enumerate}
		\item If  $s \in (0, d_h/d_w)$, there exist  constants $c,C >0$ such that for every  $x,y \in K$, $x \neq y$,
		\[
		c  \frac{ \Psi_1(x)\Psi_1(y)}{d(x,y)^{d_h-sd_w}} \le  G_s^D(x,y)  \le  \frac{C}{d(x,y)^{d_h-sd_w}}.
		\]
		\item If $s = d_h/d_w$, there exists a constant $C >0$ such that for every  $x,y \in K$, $x \neq y$
		\[
		c \Psi_1(x) \Psi_1(y) \max (1,| \ln d(x,y) |) \le G_s^D(x,y)  \le C \max (1, | \ln d(x,y) |).
		\]
	\end{enumerate}

 \end{proposition}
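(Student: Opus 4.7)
The plan is to mirror the argument used for Proposition \ref{Control-G} in the Neumann case, with two structural simplifications coming from Dirichlet boundary conditions: the heat kernel $p_t^D$ already decays to $0$ at infinity so no subtraction of $1$ is needed in the definition of $G_s^D$, and the sub-Gaussian upper bound \eqref{eq:subGauss-upperD} is valid for all $t\in(0,+\infty)$ (not only $t\in(0,1)$). The main new feature is that the lower bound \eqref{eq:loweGauss-upperD} carries the multiplicative factor $\Psi_1(x)\Psi_1(y)$, which will propagate to the lower bound on $G_s^D$.

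For the upper bound in both cases, I would plug \eqref{eq:subGauss-upperD} directly into the definition \eqref{green function Dirichlet} to get
\[
G_s^D(x,y) \le \frac{c_1}{\Gamma(s)}\int_0^{+\infty} t^{s-1-d_h/d_w}\exp\!\Bigl(\!-c_2\bigl(d(x,y)^{d_w}/t\bigr)^{\frac{1}{d_w-1}}\Bigr)\,dt,
\]
and then perform the substitution $t=u\,d(x,y)^{d_w}$. When $s<d_h/d_w$ the integral in $u$ converges (convergent at $0$ thanks to the exponential, convergent at $+\infty$ because $s-d_h/d_w<0$) and yields the desired bound $C\,d(x,y)^{sd_w-d_h}$. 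When $s=d_h/d_w$, the $u$-integral diverges logarithmically near $+\infty$; truncating at $u=1/d(x,y)^{d_w}$ (which is the natural cutoff coming from $t=1$) and estimating the tail crudely using the exponential decay of $p_t^D$ (or, equivalently, the uniform bound $p_t^D\le c\,e^{-\lambda_1^D t/2}$ for large $t$ coming from the spectral expansion \eqref{spectralD}) gives the $\max(|\ln d(x,y)|,1)$ term.

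For the lower bound, the point is that we simply drop the positive contribution of $\int_1^{+\infty} t^{s-1}p_t^D(x,y)\,dt$ and insert \eqref{eq:loweGauss-upperD} on $[0,1]$:
\[
G_s^D(x,y) \ge \frac{c_1\,\Psi_1(x)\Psi_1(y)}{\Gamma(s)}\int_0^{1} t^{s-1-d_h/d_w}\exp\!\Bigl(\!-c_2\bigl(d(x,y)^{d_w}/t\bigr)^{\frac{1}{d_w-1}}\Bigr)\,dt.
\]
Again with $t=u\,d(x,y)^{d_w}$ this reduces, for $s<d_h/d_w$, to $c\,\Psi_1(x)\Psi_1(y)\,d(x,y)^{sd_w-d_h}\int_0^{d(x,y)^{-d_w}} u^{s-1-d_h/d_w}e^{-c_2 u^{1/(1-d_w)}}\,du$; the integral is bounded below by a positive constant independent of $x,y$ since $d(x,y)\le\mathrm{diam}\,K=1$ and the integrand is nonnegative with a strictly positive convergent integral on $(0,1)$. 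For $s=d_h/d_w$ the same change of variable combined with the elementary bound $\int_1^{1/d(x,y)^{d_w}} u^{-1}e^{-c_2 u^{1/(1-d_w)}}\,du\ge c'\ln(1/d(x,y))$ (using $d_w>1$ so the exponential is bounded below on $[1,+\infty)$) produces the $\max(1,|\ln d(x,y)|)$ factor.

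No serious obstacle is expected since the arguments are purely computational and essentially identical to those of Proposition \ref{Control-G}; the only technical care needed is to keep track of the $\Psi_1(x)\Psi_1(y)$ factor, which appears only in the lower heat kernel estimate and therefore only in the lower bound on $G_s^D$, and to verify that the truncated integrals are uniformly bounded below for $(x,y)$ with $d(x,y)\in(0,1]$. This is why the statement of the proposition is asymmetric in the upper and lower bounds, unlike the Neumann case.
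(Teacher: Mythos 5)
Your proposal is correct and follows exactly the route the paper intends: the paper gives no separate proof but states that the result "can be proved as in the case of the Neumann boundary condition," i.e.\ by adapting the argument of Proposition \ref{Control-G}, which is precisely what you do. You also correctly identify the two relevant structural differences — the tail integral $\int_1^{+\infty}t^{s-1}p_t^D(x,y)\,dt$ is nonnegative (so no additive constant is lost in the lower bound, unlike the Neumann case) and the factor $\Psi_1(x)\Psi_1(y)$ enters only through the lower heat-kernel estimate \eqref{eq:loweGauss-upperD} — so nothing further is needed.
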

	\begin{lemma}\label{Integrabilite-G-Diri} Let $p \in (0,+\infty)$. Then for any $x\in K \setminus V_0$, $y \mapsto G_s(x,y)$ is in $L^p(K,\mu)$ iff $s>\max\left(\frac{(p-1)d_h}{p d_w},0\right)$. Moreover, for $s>\max\left(\frac{(p-1)d_h}{p d_w},0\right)$,
$$\sup_{x\in K} \int_{K} \left|G^D_s(x,y)\right|^p \mu(dy)<+\infty.
$$
\end{lemma}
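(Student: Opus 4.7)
\textbf{Proof plan for Lemma \ref{Integrabilite-G-Diri}.} The proof will mirror that of Lemma \ref{Integrabilite-G} for the Neumann case, with Proposition \ref{estimate G Dirichlet} replacing Proposition \ref{Control-G}. The main difference is that the lower bound on $G_s^D$ carries a factor $\Psi_1(x)\Psi_1(y)$ that vanishes on $V_0$, which is precisely why the equivalence is restricted to $x\in K\setminus V_0$.

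For the sufficient condition, fix $x\in K$ and $s>\max\bigl(\tfrac{(p-1)d_h}{pd_w},0\bigr)$. First, when $s\in(0,d_h/d_w)$, the upper bound $G^D_s(x,y)\le C\,d(x,y)^{sd_w-d_h}$ from Proposition \ref{estimate G Dirichlet} gives
\[
\int_K |G^D_s(x,y)|^p\,\mu(dy)\le C^p\int_K \frac{\mu(dy)}{d(x,y)^{p(d_h-sd_w)}},
\]
which is finite (and uniformly so in $x$) by Lemma \ref{Integrabilite-dist}, exactly as in the proof of Lemma \ref{Integrabilite-G}, since $p(d_h-sd_w)<d_h$ is equivalent to $s>(p-1)d_h/(pd_w)$. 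When $s=d_h/d_w$, the logarithmic upper bound from Proposition \ref{estimate G Dirichlet} yields an integrable integrand and gives the bound uniformly in $x$. When $s>d_h/d_w$, I will show that $G^D_s$ is in fact uniformly bounded on $K\times K$: splitting $G^D_s(x,y)=\frac{1}{\Gamma(s)}\bigl(\int_0^1+\int_1^{+\infty}\bigr)t^{s-1}p^D_t(x,y)\,dt$, the integral on $(0,1)$ is bounded by the sub-Gaussian upper bound \eqref{eq:subGauss-upperD} (the exponent $s-1-d_h/d_w$ is integrable at $0$), and on $(1,+\infty)$ we use the exponential decay $p^D_t(x,y)\le C\textup{e}^{-\lambda^D_1 t}$ coming from the spectral expansion \eqref{spectralD} and $\lambda^D_1>0$. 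The uniform boundedness of $G^D_s$ on $K\times K$ then yields $\sup_x\int_K|G^D_s|^p d\mu\le \|G^D_s\|_\infty^p<\infty$.

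For the necessary condition, fix $x\in K\setminus V_0$ and suppose $s\le(p-1)d_h/(pd_w)$, so in particular $s<d_h/d_w$. By the lower bound in Proposition \ref{estimate G Dirichlet},
\[
\int_K|G^D_s(x,y)|^p\mu(dy)\ge c^p\,\Psi_1(x)^p\int_K\frac{\Psi_1(y)^p}{d(x,y)^{p(d_h-sd_w)}}\mu(dy).
\]
Since $\Psi_1$ is continuous on $K$, $\Psi_1(x)>0$ by assumption, and $\Psi_1$ vanishes only on the three-point set $V_0$ (so $\Psi_1>0$ $\mu$-a.e.), there exists a neighborhood $B(x,r)\subset K\setminus V_0$ and a constant $c'>0$ with $\Psi_1\ge c'$ on $B(x,r)$. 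Restricting the integral to $B(x,r)$ reduces the question to the divergence of $\int_{B(x,r)} d(x,y)^{-\gamma}\mu(dy)$ for $\gamma=p(d_h-sd_w)\ge d_h$, which follows from the dyadic annuli argument used in the proof of Lemma \ref{Integrabilite-dist} (the part showing divergence for $\gamma\ge d_h$), using the Ahlfors regularity \eqref{Ahlfors}. This gives $\int_K|G^D_s(x,\cdot)|^p d\mu=+\infty$ and completes the equivalence.

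The only genuinely new ingredient compared to the Neumann proof is handling the boundary behavior encoded by $\Psi_1$, and this is done by working with a point $x$ away from $V_0$ where $\Psi_1(x)>0$ and localizing on a neighborhood that avoids $V_0$. The large-$t$ exponential decay of $p_t^D$, which replaces the subtraction of the constant $1$ in the Neumann definition \eqref{green function}, makes the analysis for $s\ge d_h/d_w$ slightly cleaner than in the Neumann case.
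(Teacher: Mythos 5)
Your proposal is correct and follows essentially the same route as the paper: sufficiency via the upper bounds of Proposition \ref{estimate G Dirichlet} together with Lemma \ref{Integrabilite-dist}, and necessity by localizing to a ball $B(x,\varepsilon)\subset K\setminus V_0$ on which the continuous eigenfunction $\Psi_1$ is bounded below by a positive constant, which is exactly the one genuinely new point the paper singles out. The only (harmless) cosmetic difference is your treatment of $s>d_h/d_w$ by proving uniform boundedness of $G^D_s$ directly from the large-$t$ exponential decay of $p^D_t$, where the Neumann argument instead invokes a $d(x,y)^{-\gamma}$ bound.
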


\begin{proof}
The proof is similar to that of Lemma \ref{Integrabilite-G}. We however point out a slight difference which comes from the lower bounds in Proposition \ref{estimate G Dirichlet} involving $\Psi_1$. Assume $0<s<d_h/d_w$ and that $x \in K \setminus V_0$ is such that $y\to G_s(x,y) \in L^p(K,\mu)$. Then, from Proposition \ref{estimate G Dirichlet} we have  $\int_K \frac{ \Psi_1(y) }{d(x,y)^{p(d_h-sd_w)}} \mu(dy) <+\infty$. Let now $\varepsilon >0$ be small enough such that the closed ball $B(x,\varepsilon) \subset K\setminus V_0$, since $\psi_1$ is positive and continuous on $B(x,\varepsilon)$ we deduce that is bounded below by a positive constant on $B(x,\varepsilon)$. Therefore $\int_{B(x,\varepsilon)} \frac{ 1 }{d(x,y)^{p(d_h-sd_w)}} \mu(dy) <+\infty$ which yields $p(d_h-sd_w) <d_h$ using a proof similar to that of Lemma \ref{Integrabilite-dist}.
\end{proof}

\begin{theorem}\label{Holder-Riesz_Dirichlet}\hfill

\begin{enumerate}
\item  For any $s \in \left( \frac{d_h}{d_w} , 1\right)$, there exists a constant $C>0$ such  that for every  $x,y,z \in K$,

\[
| G^D_s (x,z)-G^D_s(y,z)|  \le Cd(x,y)^{ sd_w-d_h} .
\]
\item  For $s \ge 1$, there exists a constant $C>0$ such  that for every  $x,y,z \in K$,

\[
| G^D_s (x,z)-G^D_s(y,z)|  \le Cd(x,y)^{ d_w-d_h} \max\left(\left|\ln d(x,y)\right|,1 \right).
\]

\end{enumerate}
\end{theorem}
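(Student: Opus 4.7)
The plan is to mirror the proof of Theorem \ref{Holder-Riesz} for the Neumann case, replacing the Neumann ingredients by their Dirichlet analogues and exploiting the fact that the Dirichlet heat kernel $p^D_t$ decays to $0$ as $t\to\infty$ rather than to the constant $1$. Concretely, write
\[
G^D_s(x,z)-G^D_s(y,z)=\frac{1}{\Gamma(s)}\int_0^{+\infty}t^{s-1}\bigl(p^D_t(x,z)-p^D_t(y,z)\bigr)\,dt
\]
and split the time integral into regions where different estimates dominate.

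For $s\in(d_h/d_w,1)$, I would split the integral at a threshold $\delta\in(0,1]$. On $(0,\delta)$, the sub-Gaussian upper bound \eqref{eq:subGauss-upperD} gives $|p^D_t(x,z)-p^D_t(y,z)|\le C t^{-d_h/d_w}$ by the triangle inequality, yielding an $I_1$ term bounded by $C\delta^{s-d_h/d_w}$ since $s>d_h/d_w$. On $(\delta,+\infty)$, one needs the analogue of Lemma 3.4 in \cite{ABCRST3}, namely a pointwise H\"older estimate $|p^D_t(x,z)-p^D_t(y,z)|\le C d(x,y)^{d_w-d_h}/t$ valid for all $t>0$. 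Given this, the tail integral is controlled by $C d(x,y)^{d_w-d_h}\delta^{s-1}$ since $s<1$. Optimizing by choosing $\delta=d(x,y)^{d_w}\in(0,1]$ balances the two contributions and yields the bound $C d(x,y)^{sd_w-d_h}$.

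For $s=1$, I would split into three regions $(0,\delta)$, $(\delta,1/\delta)$, $(1/\delta,+\infty)$ as in the Neumann case. The first two pieces are handled exactly as above and produce the $d(x,y)^{d_w-d_h}|\ln d(x,y)|$ factor upon choosing $\delta=d(x,y)^{d_w}$. The third piece is actually easier than in the Neumann setting: from the spectral expansion \eqref{spectralD} and the positivity of $\lambda^D_1$, one has $\|p^D_t\|_\infty\le C e^{-\lambda^D_1 t}$ for $t\ge 1$ without needing to subtract any constant, and the integral $\int_{1/\delta}^{+\infty}e^{-\lambda^D_1 t}\,dt$ is super-polynomially small in $\delta$. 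For $s>1$, I would prove a Dirichlet analogue of Lemma \ref{conv-G}, namely $G^D_s(x,z)=\int_K G^D_1(x,u)G^D_{s-1}(u,z)\,\mu(du)$, by the semigroup identity $P^D_s P^D_t=P^D_{s+t}$ applied to the Dirichlet heat kernel (the proof in Appendix A carries over verbatim, even more easily since no constant terms need subtracting). Combining the $s=1$ estimate on $G^D_1$ with the uniform $L^1$ bound $\sup_z\int_K|G^D_{s-1}(u,z)|\mu(du)<+\infty$ from Lemma \ref{Integrabilite-G-Diri} closes the case $s\ge 1$.

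The main obstacle is establishing the pointwise H\"older estimate $|p^D_t(x,z)-p^D_t(y,z)|\le C d(x,y)^{d_w-d_h}/t$ for the Dirichlet heat kernel. In the Neumann setting, this is cited from Lemma~3.4 of \cite{ABCRST3}. For the Dirichlet case the argument should go through by the same method since it relies on parabolic regularity estimates tied only to the sub-Gaussian upper bound and the walk dimension, both of which hold for $p^D_t$. However, one has to be slightly careful near the boundary points $V_0$ where $\Psi_j$ vanishes; either one invokes the corresponding parabolic H\"older regularity statement for Dirichlet heat kernels on fractals directly, or one deduces it from the Neumann H\"older bound together with the representation $p^D_t=p_t-(\text{harmonic correction})$, noting that the harmonic correction is bounded in $L^\infty$ and satisfies its own H\"older property by the elliptic H\"older regularity on $K$ established in \cite{BP88}. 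Once this H\"older heat kernel bound is in hand, the rest of the argument is routine adaptation of the Neumann proof.
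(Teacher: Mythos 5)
The paper offers no separate proof of Theorem \ref{Holder-Riesz_Dirichlet} --- it is left to the reader as a routine adaptation of Theorem \ref{Holder-Riesz} --- and your proposal is exactly that adaptation (same splitting of the time integral, same choice $\delta=d(x,y)^{d_w}$, same convolution identity for $s>1$), correctly isolating the Dirichlet analogue of the heat-kernel H\"older bound $|p^D_t(x,z)-p^D_t(y,z)|\le Cd(x,y)^{d_w-d_h}/t$ as the one ingredient needing separate justification. One minor caution: the difference $p_t-p^D_t$ is \emph{not} a time-independent harmonic correction (it equals $\mathbb{E}_x\bigl[p_{t-\tau}(X_\tau,z)\mathbf{1}_{\{\tau<t\}}\bigr]$ with $\tau$ the hitting time of $V_0$), so of your two suggested routes the second is not immediate as stated, and the first --- a direct parabolic H\"older estimate for the killed kernel, i.e.\ the analogue of Lemma 3.4 of \cite{ABCRST3}, which rests only on the sub-Gaussian bounds and the regularity theory for the Dirichlet form restricted to $\mathcal F_D\subset\mathcal F$ --- is the one to use.
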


\subsection{Dirichlet fractional stable fields}

\begin{definition}\label{def fractional field diri} 
  Let $s >0$ and $\alpha \in (0,2]$. The  Dirichlet fractional $\alpha$-stable field $\left\{ X_{s,\alpha}^D(f), f\in {\mathcal{H}^{-s}_D (K)} \right\}$ is defined as
  \[
X_{s,\alpha}^D(f) =W_{K,\alpha} ((-\Delta_D)^{-s} f)=\int_K (-\Delta_D)^{-s} f (z) \,W_{K,\alpha}(dz).
  \]
\end{definition}
\begin{definition} Let $\alpha \in (0,2]$ and $s>\max\left(\frac{(\alpha-1)d_h}{\alpha d_w},0\right) $. Then for any $x\in K$,  by Lemma \ref{Integrabilite-G}, the Dirichlet Riesz kernel $z\mapsto G^D_s(x,z)$ is in $L^{\alpha}(K,\mu)$ and so a fractional $\alpha$-stable field $\widetilde{X}^D_{s,\alpha}=\left(\widetilde{X}_{s,\alpha}^D(x)\right)_{x\in K}$ 
is well-defined by 
$$
\widetilde{X}_{s,\alpha}^D(x)=\int_K  G^D_s(x,z) W_{K,\alpha}(dz), \ x\in K.
$$
This field, defined pointwise, is called the Dirichlet fractional $\alpha$-stable of order $H=sd_\omega-\frac{(\alpha-1)d_h}{\alpha }$. 
\end{definition}

Note that since $G_s(q,z)=0$ whenever $q \in V_0$, as expected we get that the Dirichlet fractional  $\alpha$-stable field vanishes at the 3 vertices of the Sierpi\'nski triangle.

The following theorem can be proved as Theorem \ref{Existence-Density}.

\begin{theorem}\label{Existence-Density Dirichlet}
  Let $\alpha\in (0,2)$. 
  
  \begin{itemize}
\item   If    $s>\max\left(\frac{(\alpha-1)d_h}{\alpha d_w},0\right)$, then 
for every $ f\in \mathcal{S}_D(K)$, we have a.s.
  \[
\int_K f(x) \widetilde{X}_{s,\alpha}^D(x) \mu(dx) =X_{s,\alpha}^D(f)
  \]
  with $\widetilde{X}_{s,\alpha}^D=\left(\widetilde{X}_{s,\alpha}^D(x)\right)_{x\in K}$ the fractional $\alpha$-stable field  well-defined by 
\begin{equation}
\label{dens def diri}
\widetilde{X}_{s,\alpha}^D(x)=\int_K  G^D_s(x,z) W_{K,\alpha}(dz), \ x\in K. 
\end{equation}
\item Moreover, if there exists a  random field $(Y(x))_{x \in K}$ such that a.s
  \[
\int_K |Y(x)| \mu(dx) <+\infty 
  \]
 and such that for every $ f\in \mathcal{S}(K)$ we have a.s.
 \[
\int_K f(x) Y(x) \mu(dx) =X_{s,\alpha}^D(f),
  \]
  then  we have $s>\max\left(\frac{(\alpha-1)d_h}{\alpha d_w},0\right)$ and for a.e. $x\in K$  with probability one $Y (x)=\widetilde{X}_{s,\alpha}^D(x)$.
\end{itemize}
\end{theorem}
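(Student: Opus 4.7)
The plan is to transcribe the proof of Theorem \ref{Existence-Density} to the Dirichlet setting, using the Dirichlet heat kernel, Dirichlet Riesz kernel $G_s^D$, and the Dirichlet Schwartz space $\mathcal{S}_D(K)$. Two simplifications appear compared to the Neumann case: the Dirichlet heat semigroup vanishes at $V_0$ and $P_t^D 1\not=1$, so no zero eigenvalue needs to be subtracted; and no additive constant $\int_K Y\,d\mu$ will survive in the identification. The main issue to monitor is that the lower bound in Proposition \ref{estimate G Dirichlet} carries a factor $\Psi_1(x)\Psi_1(y)$ which vanishes on $V_0$; this forces the contradiction argument in the converse implication to be carried out away from the boundary.

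First I would establish the analogue of Lemma \ref{Integrabilite-Dens}. By Lemma \ref{Integrabilite-G-Diri}, $G_s^D(x,\cdot)\in L^\alpha(K,\mu)$ for every $x\in K$ as soon as $s>\max\bigl(\tfrac{(\alpha-1)d_h}{\alpha d_w},0\bigr)$, so $\widetilde X_{s,\alpha}^D(x)$ is well-defined pointwise by \eqref{dens def diri}. Splitting into the cases $\alpha>1$ and $\alpha\le 1$ exactly as in Lemma \ref{Integrabilite-Dens}, and using the uniform bound $\sup_{x\in K}\int_K |G_s^D(x,y)|^p\mu(dy)<+\infty$ of Lemma \ref{Integrabilite-G-Diri}, the stochastic Fubini theorem (Theorem 11.3.2 in \cite{taqqu}) yields a.s.\ $\int_K|\widetilde X_{s,\alpha}^D(x)|\mu(dx)<+\infty$. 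Then, for $f\in\mathcal S_D(K)$, applying Theorem 11.4.1 of \cite{taqqu} and the representation \eqref{eq:Fractional Laplacian-Riesz Kernel Dirichlet} gives almost surely
\[
\int_K f(x)\widetilde X_{s,\alpha}^D(x)\mu(dx)=\int_K\!\Bigl(\int_K G_s^D(x,y)f(x)\mu(dx)\Bigr) W_{K,\alpha}(dy)=\int_K (-\Delta_D)^{-s} f(y)\,W_{K,\alpha}(dy)=X_{s,\alpha}^D(f).
\]

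For the converse, suppose $Y$ satisfies $\int_K|Y|\,d\mu<+\infty$ a.s.\ and $\int_K f\,Y\,d\mu=X_{s,\alpha}^D(f)$ a.s.\ for every $f\in\mathcal S_D(K)$. Unlike the Neumann case, there is no need to recenter $Y$ since the Dirichlet fractional Laplacian has no zero mode; in particular one may use the test function $f=p_t^D(y,\cdot)\in\mathcal S_D(K)$ (which vanishes on $V_0$) without subtracting a constant. This gives
\[
\mathbb E\bigl(e^{iu\int_K p_t^D(y,x) Y(x)\mu(dx)}\bigr)=\exp\!\bigl(-|u|^\alpha\|(-\Delta_D)^{-s}p_t^D(y,\cdot)\|_{L^\alpha(K,\mu)}^\alpha\bigr).
\]
Writing $(-\Delta_D)^{-s}p_t^D(y,\cdot)(x)=\int_K G_s^D(x,z)p_t^D(y,z)\mu(dz)$, combining the Dirichlet lower bounds from Proposition \ref{estimate G Dirichlet} and the Dirichlet sub-Gaussian lower bound \eqref{eq:loweGauss-upperD} and performing a ball estimate as in the Neumann proof yields, for $x,y$ in a compact subset $K'\subset K\setminus V_0$,
\[
(-\Delta_D)^{-s}p_t^D(y,\cdot)(x)\ge c\,\Psi_1(x)^2\Psi_1(y)\,(d(x,y)+t^{1/d_w})^{sd_w-d_h}.
\]
Since $\Psi_1$ is bounded below on $K'$, if $s\le\max\bigl(\tfrac{(\alpha-1)d_h}{\alpha d_w},0\bigr)$ then $\|(-\Delta_D)^{-s}p_t^D(y,\cdot)\|_{L^\alpha(K,\mu)}\to+\infty$ as $t\to 0$ for any $y\in K'$, so the characteristic function of $\int_K p_t^D(y,\cdot)Y\,d\mu$ tends to $0$, contradicting the $L^1$-convergence $\int_K p_t^D(y,\cdot)Y\,d\mu\to Y(y)$ guaranteed by the Dirichlet semigroup (see \cite[Thm 1.4.1]{Davies}) for $y\in K\setminus V_0$. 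Hence $s>\max\bigl(\tfrac{(\alpha-1)d_h}{\alpha d_w},0\bigr)$, and the first part together with letting $t\to 0$ in $\int_K p_t^D(x,\cdot)Y\,d\mu=\int_K p_t^D(x,\cdot)\widetilde X_{s,\alpha}^D\,d\mu$ yields $Y(x)=\widetilde X_{s,\alpha}^D(x)$ a.s.\ for a.e.\ $x\in K$.

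The main obstacle is the presence of the factor $\Psi_1(x)\Psi_1(y)$ in the Dirichlet lower estimates: this prevents a uniform blow-up statement on $K\times K$, and one must restrict attention to a compact subset $K'\subset K\setminus V_0$, where $\Psi_1$ is bounded below by a positive constant. Since $V_0$ is finite and hence $\mu$-negligible, this restriction is harmless for the integrability and a.e.\ conclusions, but it must be handled with care when quoting the $L^1$ convergence of the heat semigroup and the Ahlfors ball estimate.
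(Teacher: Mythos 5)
Your proposal is correct and follows exactly the route the paper intends: the paper gives no separate proof of Theorem \ref{Existence-Density Dirichlet} beyond the remark that it ``can be proved as Theorem \ref{Existence-Density}'', and your transcription correctly identifies the only genuine adaptations needed (no recentering of $Y$ since the Dirichlet test functions need not have zero mean, and localization to a compact $K'\subset K\setminus V_0$ where $\Psi_1$ is bounded below so that the lower bounds of Proposition \ref{estimate G Dirichlet} and \eqref{eq:loweGauss-upperD} still force the blow-up of $\|(-\Delta_D)^{-s}p_t^D(y,\cdot)\|_{L^\alpha(K,\mu)}$). The only blemish is cosmetic: the factor in your displayed lower bound should read $c\,\Psi_1(x)\Psi_1(y)$ (with the constant absorbing $\inf_{B(y,t^{1/d_w})}\Psi_1^2$) rather than $\Psi_1(x)^2\Psi_1(y)$, which changes nothing since all these factors are bounded below on $K'$.
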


The following result can then be proved as Theorem \ref{SPR} and Theorem \ref{SPR alpha >1}. 

\begin{theorem}
\label{SPR2}
 Let $\alpha\in (0,2)$, $s>\max\left(\frac{(\alpha-1)d_h}{\alpha d_w},0\right)$ and let  $\widetilde{X}_{s,\alpha}^D$ be the Dirichlet fractional $\alpha$-stable field. Let us set $\eta_s=\min(s,1)d_w-d_h$ and 
$$
\beta_s=\left\{\begin{array}{ll}
0&\textrm{if $s< 1$}\\
1&\textrm{if $s\ge 1$.}
\end{array}
\right. $$

 \begin{enumerate}
\item If  $s>\frac{d_h}{d_w}$ and if $\alpha \ge 1$, 
 then there exists a modification $\widetilde{X}_{s,\alpha}^{D,*}$ of $\widetilde{X}_{s,\alpha}^D$ such that  
	\begin{equation}
	\label{ModContX2}
	\lim_{\delta \to 0}\underset{\underset{x,y\in K}{\scriptstyle 0<d(x,y)}\le \delta}{\sup}\  \frac{\left| \widetilde{X}_{s,\alpha}^{D,*}(x)-\widetilde{X}_{s,\alpha}^{D,*}(y)\right|}{d(x,y)^{\eta_s} |\ln d(x,y)|^{\beta_s+\frac{1}{2}}} <\infty.	
	\end{equation} 
 \item If  $s>\frac{d_h}{d_w}$ and if $\alpha <1$, 
 then there exists a modification $\widetilde{X}_{s,\alpha}^{D,*}$ of $\widetilde{X}_{s,\alpha}^D$ such that  
	\begin{equation}
	\label{ModContX2-alpha>1}
	\lim_{\delta \to 0}\underset{\underset{x,y\in K}{\scriptstyle 0<d(x,y)}\le \delta}{\sup}\  \frac{\left| \widetilde{X}_{s,\alpha}^{D,*}(x)-\widetilde{X}_{s,\alpha}^{D,*}(y)\right|}{d(x,y)^{\eta_s} |\ln d(x,y)|^{\beta_s}} <\infty.	
	\end{equation} 
 \item If $s\le \frac{d_h}{d_w}$, a.s., the sample paths of $\widetilde{X}_{s,\alpha}$ are unbounded on the compact $K$. 
 \end{enumerate}
\end{theorem}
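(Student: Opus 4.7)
The plan is to follow the strategies used for Theorems \ref{SPR} and \ref{SPR alpha >1} verbatim, substituting the Dirichlet kernel $G^D_s$ for the Neumann kernel $G_s$ throughout; the key ingredients are the Dirichlet Hölder estimate (Theorem \ref{Holder-Riesz_Dirichlet}), the Dirichlet integrability estimate (Lemma \ref{Integrabilite-G-Diri}), and the Dirichlet pointwise bounds (Proposition \ref{estimate G Dirichlet}). First I would note that the LePage representation of Proposition \ref{FSM_SeriesRepresentation} carries over: under Assumption \ref{AssumptionLePage},
\[
\left(\widetilde{X}^D_{s,\alpha}(x)\right)_{x\in K}\stackrel{(fdd)}{=}\left(D_\alpha \sum_{n=1}^{+\infty} T_n^{-1/\alpha} G^D_s(x,\xi_n) g_n\right)_{x\in K}=:\left(Y^D_{s,\alpha}(x)\right)_{x\in K},
\]
which is justified exactly as before since $G^D_s(x,\cdot)\in L^\alpha(K,\mu)$ by Lemma \ref{Integrabilite-G-Diri}.

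For Assertion 1 ($\alpha\ge 1$, $s>d_h/d_w$), I would repeat the entire chain of reasoning in Proposition \ref{ControlSums}, now applied to the partial sums $Y^D_{s,\alpha,N}$. Conditionally on $(T_n,\xi_n)_{n\ge 1}$, these partial sums are centered Gaussian in $x$, and the Dirichlet analogue of Lemma \ref{ControlVarCond} follows from Theorem \ref{Holder-Riesz_Dirichlet} together with the a.s. convergence of $\sum_n T_n^{-2/\alpha}$: the conditional variance is bounded uniformly in $N$ by $A^2 d(x,y)^{2\eta_s}\max(|\ln d(x,y)|^{\beta_s},1)^2$. One then applies the Garsia--Rodemich--Rumsey inequality of Lemma \ref{BPlem} with the very same choice $\psi(u)=\exp(u^2/4)-1$ and $p=p_s$, proves as in Lemma \ref{ControlGammaN} that $\mathbb{E}(\sup_N \Gamma_N)<\infty$, and concludes with the two integrations by parts. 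The extension from a dense countable set to all of $K$ via stochastic continuity and uniform equicontinuity is then identical to the Neumann case.

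For Assertion 2 ($\alpha<1$), I would follow the proof of Theorem \ref{SPR alpha >1}: since $0<\alpha<1$, Theorem 1.4.5 in \cite{taqqu} ensures that $\sum_{n\ge 1} T_n^{-1/\alpha}|g_n|$ converges almost surely to a finite random variable $A$. Combined with the pointwise Dirichlet Hölder bound of Theorem \ref{Holder-Riesz_Dirichlet}, this gives directly
\[
|Y^D_{s,\alpha}(x)-Y^D_{s,\alpha}(y)|\le c\, A\, d(x,y)^{\eta_s}\max(|\ln d(x,y)|^{\beta_s},1),
\]
which saves the extra $|\ln d(x,y)|^{1/2}$ factor from GRR. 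The modification is then built exactly as at the end of the proof of Theorem \ref{SPR}.

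For Assertion 3 ($s\le d_h/d_w$), the argument of Theorem \ref{SPR} via Theorem 10.2.3 of \cite{taqqu} needs one small adaptation because the lower bound of Proposition \ref{estimate G Dirichlet} carries a factor $\Psi_1(x)\Psi_1(y)$ that vanishes on $V_0$; this is where the only genuine obstacle lies. I would fix a countable dense subset $\mathcal{D}\subset K\setminus V_0$ and, for each $y\in K\setminus V_0$, use the fact that $\Psi_1$ is continuous and $\Psi_1(y)>0$ to find a neighborhood $U_y$ of $y$ on which $\Psi_1\ge c_y>0$; picking $x_k\in \mathcal{D}\cap U_y$ with $d(x_k,y)\to 0$ and invoking the lower bound gives $\sup_{x\in\mathcal{D}}|G^D_s(x,y)|=+\infty$. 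Since $\mu(V_0)=0$, this yields
\[
\int_K \sup_{x\in\mathcal{D}}|G^D_s(x,y)|^\alpha \mu(dy)=+\infty,
\]
so Theorem 10.2.3 of \cite{taqqu} gives the a.s. unboundedness of $\widetilde{X}^D_{s,\alpha}$ on $\mathcal{D}$, hence on $K$. The case $s=d_h/d_w$ is handled identically using the logarithmic lower bound.
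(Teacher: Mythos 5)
Your proposal is correct and follows exactly the route the paper intends (the paper itself gives no details, saying only that the result "can be proved as Theorem \ref{SPR} and Theorem \ref{SPR alpha >1}"): the H\"older and integrability estimates for $G_s^D$ replace their Neumann counterparts verbatim in the LePage/Garsia--Rodemich--Rumsey argument and in the absolutely convergent series argument for $\alpha<1$. You have also correctly isolated and resolved the one genuine point of divergence, namely that the lower bound in Proposition \ref{estimate G Dirichlet} carries the factor $\Psi_1(x)\Psi_1(y)$ vanishing on $V_0$, which you handle by working on a dense countable subset of $K\setminus V_0$ where $\Psi_1$ is positive and continuous, exactly as the paper does in the proof of Lemma \ref{Integrabilite-G-Diri}.
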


Finally, we conclude by pointing out that the Dirichlet fractional  $\alpha$-stable field on the Sierpi\'nski  gasket satisfies the same invariance properties as for the Neumann field which are stated in Section \ref{invariance fields}. 

\appendix
\section{Proof of Lemma \ref{conv-G}}
\label{conv-G-proof}

We first observe that according to Proposition \ref{Control-G} and Lemma \ref{Integrabilite-dist} one has for every $x \in K$
\begin{align}\label{integrability G}
\int_K   \int_K \left|G_s(x,u)G_{t}(u,y)\right|\mu(du)   \mu (dy)<+\infty.
\end{align}
Now, on one hand, from Lemma \ref{riesz-kernel laplace}, we have for every $f \in \mathcal S(K)$
\[
(-\Delta)^{-s-t} f (x)=\int_K G_{s+t}(x,y) f(y) \mu(dy).
\]

On the other hand, from  \eqref{composition laplace frac},  Lemma \ref{riesz-kernel laplace} and Fubini theorem which is justified thanks to \eqref{integrability G}, we have
\begin{align*}
(-\Delta)^{-s-t} f (x) & =\int_K G_{s}(x,y) (-\Delta )^{-t}f(y) \mu(dy) \\
 &=\int_K G_{s}(x,y) \left(\int_K G_t (y,u) f(u)\mu(du)\right) \mu(dy) \\
 &=\int_K \left(  \int_K G_s(x,u)G_{t}(u,y)\mu(du) \right) f(y) \mu (dy).
\end{align*}
This gives that for every $f \in \mathcal S(K)$,
\[
\int_K \left(  \int_K G_s(x,u)G_{t}(u,y)\mu(du) \right) f(y) \mu (dy)=\int_K G_{s+t}(x,y) f(y) \mu(dy).
\]
 We use this equality with $f=p_t(z,\cdot)-1 \in\mathcal{S}(K)$ and let $t \to 0$. Since for every function $g \in L^1(K,\mu)$, $z \to \int_K p_t(z,y) g(y) \mu(dy)  $ converges in $L^1$ to $g$ when $t \to 0$ (see \cite[Page 114]{BauBook}),
 this gives that for $x \in K$ and $\mu$ a.e. $z \in K$
 \[
 G_{s+t}(x,z)=\int_K G_s(x,u)G_{t}(u,z)\mu(du).
 \]

 The conclusion will then follow from the following lemma.

 \begin{lemma}\hfill

 \begin{enumerate}
 \item  Let $s>0$. The map $(x,y) \to G_s(x,y)$  is continuous on the set $\{ (x,y) \in K^2, x \neq y \} $. If moreover $s > d_h/d_w$, it is continuous on $K \times K$. 
  \item Let $s,t>0$ and $x\in K$. The map $z \to \int_K G_s(x,u)G_{t}(u,z)\mu(du)$  is continuous on the set $\{ z \in K, z \neq x \} $. If moreover $s+t> d_h/d_w$, then it is continuous on $K$.
  \end{enumerate}
 \end{lemma}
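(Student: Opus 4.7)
My plan for proving the final lemma is to turn both statements into continuity-under-the-integral statements, handled by dominated convergence for Assertion 1 and by Vitali's convergence theorem (together with the potential-theoretic estimate of Proposition \ref{Control-G} and the uniform $L^p$ bound of Lemma \ref{Integrabilite-G}) for Assertion 2.

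For Assertion 1, I would start from
\[
G_s(x,y) = \frac{1}{\Gamma(s)} \int_0^{+\infty} t^{s-1} (p_t(x,y)-1)\, dt
\]
and exploit the joint continuity of $(t,x,y)\mapsto p_t(x,y)$ on $(0,+\infty)\times K\times K$, which follows from the uniformly convergent spectral expansion \eqref{spectral}. For continuity off the diagonal, fix $(x_0,y_0)$ with $x_0\ne y_0$ and work on a neighbourhood where $d(x,y)\ge \delta>0$: the sub-Gaussian upper bound \eqref{eq:subGauss-upper} provides a dominating function integrable in $t$ near $0$, while Lemma~2.3 of \cite{BL22-FGF} (coming from the spectral expansion) gives $|p_t(x,y)-1|\le C\exp(-\lambda_1 t)$ for $t\ge 1$, dominating near $t=+\infty$. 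Dominated convergence concludes. When $s>d_h/d_w$, the same argument extends to the diagonal: the uniform upper bound $p_t(x,y)\le c_3 t^{-d_h/d_w}$ makes $t^{s-1}p_t(x,y)$ dominated by $Ct^{s-1-d_h/d_w}$, which is integrable near $0$ exactly when $s>d_h/d_w$.

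For Assertion 2, fix $x\in K$ and set $F(z) := \int_K G_s(x,u) G_t(u,z)\,\mu(du)$. To prove $F$ is continuous at a point $z_0\ne x$, pick $r=d(x,z_0)/3$ and split $K = A \sqcup A^c$ with $A = B(z_0, 2r)$. On $A^c$, for $z\in B(z_0, r)$ we have $d(u,z)\ge r$, so $G_t(u,z)$ is uniformly bounded and converges pointwise to $G_t(u,z_0)$ by Assertion 1; dominated convergence applies since $G_s(x,\cdot)\in L^1(K,\mu)$ by Lemma \ref{Integrabilite-G}. On $A$, $G_s(x,\cdot)$ is uniformly bounded (as $d(u,x)\ge 2r$), while Lemma \ref{Integrabilite-G} gives $\sup_{z\in K} \|G_t(\cdot,z)\|_{L^{1+\varepsilon}(K,\mu)} < +\infty$ for some $\varepsilon>0$ small enough (such $\varepsilon$ exists since $t > \varepsilon d_h/((1+\varepsilon)d_w)$ whenever $\varepsilon$ is small). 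This uniform $L^{1+\varepsilon}$-bound yields uniform integrability of the family $\{G_t(\cdot,z_n)\}$, so Vitali's theorem combined with the $\mu$-a.e.\ pointwise convergence (from Assertion 1) gives convergence of the $A$-integral. Hence $F$ is continuous on $K\setminus\{x\}$.

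For continuity on all of $K$ when $s+t>d_h/d_w$, I need to handle $z=x$. The appendix's Fubini computation already shows $F(z)=G_{s+t}(x,z)$ for $\mu$-a.e.\ $z$; since $\mu$ has full topological support on $K$, and both functions are continuous on $K\setminus\{x\}$ (by what has just been proved and by Assertion 1), the identity extends to every $z\ne x$. It then remains to show $F$ is continuous at $x$, which combined with continuity of $G_{s+t}(x,\cdot)$ at $x$ forces $F(x)=G_{s+t}(x,x)$. For $z_n\to x$, split again along $B(x,\epsilon)$ and its complement: the outside part converges to the corresponding integral with $z=x$ by dominated convergence (as before), while the inside part is estimated using Proposition \ref{Control-G} by the potential-theoretic bound
\[
\int_{B(x,\epsilon)} d(x,u)^{sd_w-d_h}\, d(u,z)^{td_w-d_h}\, \mu(du) \le C\,\epsilon^{(s+t)d_w-d_h},
\]
uniformly for $z\in B(x,\epsilon/2)$, which I would establish by a dyadic annuli decomposition exploiting the Ahlfors property \eqref{Ahlfors}, in the spirit of Lemma \ref{Integrabilite-dist}. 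Since $s+t>d_h/d_w$, the exponent is strictly positive, hence this contribution vanishes as $\epsilon\to 0$, uniformly in $z$ near $x$.

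The main obstacle I anticipate is precisely this last uniform estimate near the merged singularity at $u=x=z$ in the genuinely subcritical regime $s,t\le d_h/d_w$ with $s+t>d_h/d_w$: the two singular factors compete and one must control them together, rather than factor by factor, and the logarithmic borderline cases ($s$ or $t$ equal to $d_h/d_w$) require a small adjustment of the bound. Everything else reduces to routine dominated or Vitali convergence arguments once Assertion 1 is in hand.
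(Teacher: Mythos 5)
Your proof of Assertion 1 is correct and follows exactly the paper's route: dominated convergence in the representation \eqref{green function}, using the exponential bound $|p_t(x,y)-1|\le C e^{-\lambda_1 t}$ for $t\ge 1$, the sub-Gaussian upper bound \eqref{eq:subGauss-upper} on the region $d(x,y)\ge\varepsilon$ for the off-diagonal statement, and the uniform bound $p_t(x,y)\le c_3 t^{-d_h/d_w}$ when $s>d_h/d_w$. For Assertion 2 the paper only remarks that the proof ``relies on Assertion 1 and the upper bounds in Proposition \ref{Control-G}'' and leaves the details to the reader; your splitting into near and far regions, the Vitali/uniform-integrability step via the uniform $L^{1+\varepsilon}$ bound of Lemma \ref{Integrabilite-G}, and the dyadic-annuli estimate of the doubly singular integral near $u=x=z$ (with exponent $(s+t)d_w-d_h>0$) constitute a correct and complete filling-in of exactly that omitted argument.
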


 \begin{proof}
 We recall that
 \begin{align}\label{def g appendix}
G_s(x,y)=\frac{1}{\Gamma(s)} \int_0^{+\infty} t^{s-1} (p_t(x,y) -1)dt, \quad x,y \in K, \, x\neq y
 \end{align}
 and that the map $(t,x,y) \to p_t(x,y)$ is continuous on $(0,+\infty) \times K \times K$. 
 
 It follows from  Lemma 2.3 in \cite{BL22-FGF} that there exists a constant $C>0$ such that for $t \ge 1$ and $x,y\in K$, $|p_t(x,y)-1| \le C e^{-\lambda_1 t}$. On the other hand, if $\varepsilon >0$, $0<t <1$, $d(x,y) \ge \varepsilon$, then from \eqref{eq:subGauss-upper} one has
 \[
| p_t(x,y) -1|  \leq c_{3}t^{-d_{h}/d_{w}}\exp\biggl(-c_{4}\Bigl(\frac{\varepsilon^{d_{w}}}{t}\Bigr)^{\frac{1}{d_{w}-1}}\biggr)+1.
 \]
 It then easily follows from the representation \eqref{def g appendix} that $(x,y) \to G_s(x,y)$  is continuous on the set $\{ (x,y) \in K^2, d(x,y) > \varepsilon \} $ and the conclusion of Assertion 1 follows since $\varepsilon$ is arbitrary. To prove Assertion 1, for $s>d_h/d_w$, one can simply use the upper bound $p_t(x,y) \le c_{3}t^{-d_{h}/d_{w}}$ which is valid for all $t \in (0,1)$, $x,y \in K.$ The proof of Assertion 2 relies on Assertion 1 and the upper bounds in Proposition \ref{Control-G}. The details are left to the reader.
 \end{proof}

\bibliographystyle{plain}

\noindent
\textbf{Fabrice Baudoin}: \url{fbaudoin@math.au.dk}\\
Department of Mathematics,
Aarhus University,
Denmark

\medskip

\noindent
\textbf{Céline Lacaux}: \url{celine.lacaux@univ-avignon.fr}\\
LMA Université d'Avignon, 
UPR 2151, 
84029, Avignon, France
\end{document}